\definecolor{light-gray1}{gray}{0.90}
\definecolor{light-gray2}{gray}{0.80}
\definecolor{light-gray3}{gray}{0.60}
\definecolor{myred1}{RGB}{255, 0, 0}
\definecolor{myyellow1}{RGB}{255, 255, 219}
\definecolor{mygreen1}{RGB}{0, 255, 0}
\definecolor{mygreen2}{RGB}{0, 126, 0}
\definecolor{myblue1}{RGB}{0, 0, 255}
\newcommand{\R} {\mathbb R}
\newcommand{\cuad}{{\sqcap\kern-.68em\sqcup}}
\newcommand{\be}{\begin{equation}}
\newcommand{\ee}{\end{equation}}
\definecolor{darkgreen}{rgb}{0.2,0.7,0.1}
\newcommand{\sech}{\mathop{\mbox{\normalfont sech}}\nolimits}
\newcommand{\N}{\mathbb{N}}
\newcommand{\al}{\alpha}
\newcommand{\bt}{\beta}
\newcommand{\ga}{\gamma}
\def\bm{\left( \begin{array}{cc}}
\def\endm{\end{array}\right)}
\newcommand{\ba}{\begin{equation*}}
\newcommand{\ea}{\begin{equation*}}
\newcommand{\bea}{\begin{eqnarray}}
\newcommand{\eea}{\end{eqnarray}}
\newcommand{\bee}{\begin{eqnarray*}}
\newcommand{\eee}{\end{eqnarray*}}
\newcommand{\ben}{\begin{enumerate}}
\newcommand{\een}{\end{enumerate}}
\newtheorem{theorem}{Theorem}[section]
\newtheorem{proposition}{Proposition}[section]
\newtheorem{corollary}{Corollary}[section]
\newtheorem{lemma}{Lemma}[section]
\theoremstyle{definition}
\newtheorem{definition}{Definition}[section]
\theoremstyle{remark}
\newtheorem{remark}{Remark}[section]
 \numberwithin{equation}{section}
\title[Nonexistence and uniqueness of breathers]{Uniqueness of quasimonochromatic breathers for the generalized Korteweg-de Vries and Zakharov-Kuznetsov models}
\author[Faya]{Jorge Faya}
\address{Instituto de Ciencias F\'isicas y Matem\'aticas, Facultad de Ciencias, Universidad Austral de Chile, Valdivia, Chile.}
\email{jorge.faya@uach.cl}
\thanks{J.F.'s work is partially supported by FONDECYT 1231250 and 1221076.}
\author[Figueroa]{Pablo Figueroa}
\address{Instituto de Ciencias F\'isicas y Matem\'aticas, Facultad de Ciencias, Universidad Austral de Chile, Valdivia, Chile.}
\email{pablo.figueroa@uach.cl}
\author[Mu\~noz]{Claudio Mu\~noz}  
\address{Departamento de Ingenier\'{\i}a Matem\'atica and Centro
de Modelamiento Matem\'atico (UMI 2807 CNRS), Universidad de Chile, Casilla
170 Correo 3, Santiago, Chile.}
\email{cmunoz@dim.uchile.cl}
\thanks{C.M. was partially funded by Chilean research grants ANID 2022 Exploration 13220060, FONDECYT 1191412, 1231250, and Basal CMM FB210005 and MathAmSud WAFFLE 23-MATH-18. Part of this work was carried out while C.M. was visiting the ICFM at U. Austral, Valdivia Chile. He would like to thank the Institute and the organizers for their warming hospitality and support.}
\author[Poblete]{Felipe Poblete}
\address{Instituto de Ciencias F\'isicas y Matem\'aticas, Facultad de Ciencias, Universidad Austral de Chile, Valdivia, Chile.}
\email{felipe.poblete@uach.cl}
\thanks{F.P.'s work is partially supported by ANID Exploration project 13220060, ANID project FONDECYT 1221076 and MathAmSud WAFFLE 23-MATH-18.}
\subjclass[2010]{35Q35, 35Q51}
\begin{document}

\begin{abstract}
Consider the generalized Korteweg-de Vries (gKdV) equations with power nonlinearities $q=2,3,4\ldots$ in dimension $N=1$, and the Zakharov-Kuznetsov (ZK) model with integer power nonlinearities $q$ in higher dimensions $N\geq 2$. Among these power-type models, the only conjectured equation with space localized time periodic breathers is the modified KdV (mKdV), corresponding to the case $q=3$ and $N=1$. Quasimonochromatic solutions were introduced by Mandel \cite{RainerMandel2021} to show that sine-Gordon is the only scalar field model with breather solutions among this class.  In this paper we consider smooth generalized quasimonochromatic solutions of arbitrary size for gKdV and ZK models and provide a rigorous proof that mKdV is the unique power-like model among them with spatially localized breathers of this type. In particular, we show the nonexistence of breathers of this class in the ZK models. The method of proof involves the use of the naturally coherent algebra of Bell's polynomials to obtain particularly distinctive structural elliptic PDEs satisfied by breather-like quasimonochromatic solutions. A reduction of the problem to the classification of solutions of these elliptic PDEs in the entire space is performed, and de Giorgi type uniqueness results are proved in this particular case, concluding the uniqueness of the mKdV breather, and the nonexistence of localized smooth breathers in the ZK case. No assumption on well-posedness is made, and the power of the nonlinearity is arbitrary. 
\end{abstract}

\maketitle

\section{Introduction}

\subsection{Setting} Let $N\geq 1$.  In this paper we consider the model 
\begin{equation}\label{mKdVN}
\partial_t u + \partial_{x_1} (\Delta u +2u^q)=0,
\end{equation}
for a given real-valued function $u=u(t,x)$. Here $t\in\R$, $x=(x_1,x_2,\ldots,x_N)\in\R^N$, and $q\in\{2,3,4,\ldots\}$. The Laplacian operator $\Delta$ is defined as $\Delta=\sum_{j=1}^N \partial_{x_j}^2.$ A particular case of this equation is the completely integrable, modified Korteweg-de Vries (mKdV) equation \cite{HIROTA1,Wadati} ($N=1$, $q=3$)
\begin{equation}\label{mKdV}
\partial_t u + \partial_x(\partial_x^2 u +2u^3)=0,
\end{equation}
which has been extensively studied during past years \cite{KS3}. Another interesting case is represented by the KdV model \cite{KdV}, obtained for $N=1$ and $q=2$. Both KdV and mKdV have an impressive range of mathematical structure, that will be reviewed below. If $N\geq 2$, \eqref{mKdVN} is known as the Zakharov-Kuznetsov (ZK) model \cite{ZK,Kuznetsov}. A detailed bibliographical description on the generalized KdV models can be found in Linares and Ponce \cite{LP}. Depending on the dimension $N$, the Cauchy problem associated to \eqref{mKdVN} may be either well-posed or ill-posed in standard Sobolev spaces. Indeed, \eqref{mKdV} in the case $N=1$ is locally well-posed in $H^1$ \cite{KPV1} and globally well-posed if $q\leq4$. Blow up is present if $q\geq 5$, see \cite{MM3}. In the case where $N\geq 2$, \eqref{mKdVN} is the Zakharov-Kuznetsov and may have blow-up solutions in dimension $N=2$ and $q=3$ \cite{Farah2}. Local and global well-posedness in dimension $N=2$ is well-understood, see \cite{Kinoshita:Arxiv:2019} and references therein. The situation for $N\geq 3$ is still not well-understood, but local well-posedness is well-known \cite{Herr:Kinoshita:Arxiv:2020}. Dynamical long time behavior and collision results in the ZK case have been obtained in \cite{Farah2,CMPS,PV1,PV2}. 

\medskip

In this paper, we are interested in the existence and nonexistence of breathers for the general model \eqref{mKdVN}. We understand breathers as localized in space, time periodic smooth solutions. Among the equations represented in \eqref{mKdVN},  mKdV \eqref{mKdV} is well-known by having explicit breather solutions. Indeed, let $\alpha,\beta >0$. From \cite{Wadati}, \cite[p. 139]{Lamb} and \cite{AM}, the space aperiodic mKdV breather solution is  given by the 4-parameter expression
\begin{equation}\label{BBB}
u=B(t,x;\alpha,\beta,x_1,x_2)= -2 \partial_x \arctan \left( \frac{\beta}{\alpha} \frac{\sin (2\alpha x+\delta t +x_1)}{\cosh(2\beta x+\gamma t+x_2)}\right),
\end{equation}
where  $\gamma:=8\beta(3\alpha^2-\beta^2)$, $\delta:=8\alpha(\alpha^2-3\beta^2)$, and $x_1,x_2$ are free real-valued shifts. In applications, breathers appear in the description of the movement of planar curves \cite{GoPe,Na1,Na2}, and lack of uniformly continuous well-posedness \cite{KPV2,Ale1}. Its natural structure has been understood using numerical \cite{AGV}, PDE \cite{AM,Munoz,AleProy1,AleProy2} and Inverse Scattering methods \cite{OW,Ale,Ale0,ChenLiu}, revealing that their structural stability/instability is characterized by universal tools \cite{Munoz2,AFM}.

\medskip

Following Mandel \cite{RainerMandel2021}, who described \emph{quasimonochromatic} solutions in scalar field theories, we seek to describe arbitrary size breathers as particular solutions of \eqref{mKdVN}. Unfortunately, the proofs in \cite{RainerMandel2021} do not apply to the gKdV/ZK cases due to the lack of a closed related algebraic structure as present in scalar field models.  To explain this point in detail, first notice that in the case $\gamma=0$, the breather solution $B$ \eqref{BBB} has the  representation
\begin{equation}\label{ansatzlamb}
B(t,x)= -2 \partial_x \arctan \left( \sqrt{3} \frac{\sin (2\alpha x-32\alpha^2 t +x_1)}{\cosh(\sqrt{3} \alpha x+x_2)}\right),
\end{equation}
o more generally
\begin{equation}\label{BBBB}
B(t,x)= \partial_x \big( F(p(x)\sin \left(2\alpha(x+m t)) \right) \big).
\end{equation}
Notice that with no loss of generality we can always assume $F(0)=0$, and if $u$ is solution, $-u$ and  $\lambda u(\lambda^3 (t+t_0), \lambda (x+x_0)) $, $\lambda>0$, $t_0,x_0\in\mathbb R$ are also solutions as well. These are the so-called classical symmetries in mKdV and must be considered in any uniqueness result. Replacing \eqref{BBBB} in \eqref{mKdVN} ($N=1$) leads to a complicated equation for $F$ and $p$ having a strongly perturbed structure compared with the one appearing in Mandel's work. Therefore, finding that \eqref{ansatzlamb} is the unique quasimonochromatic breather seems not clear. In this work we will introduce new techniques that will help us to overcome this difficulty.


\begin{definition}[see \cite{RainerMandel2021}]\label{mono}
 We call a function $u:\mathbb R \times \mathbb{R}^N \to \mathbb R$ a quasimonochromatic breather of \eqref{mKdVN} if
 \begin{equation}\label{ansatz}
\begin{aligned}
 &~{} u(t,x) =\partial_{x_1} \left( F(p(x)\sin (2\alpha(x_1+m t))) \right),\\
 &~{} \hbox{with} \quad x_1\in\R, ~x=(x_1,\ldots,x_N)\in \mathbb R^N,~ t\in \mathbb R,
 \end{aligned}
 \end{equation}
is a smooth solution of \eqref{mKdVN}, where $m\in \mathbb R$, $\alpha>0$, and $F\in C^\infty(\mathbb R)$, $p\in C^2(\mathbb R)$ are nontrivial functions such that $F(0)=0$ and $p(x), \nabla p(x), D^2 p(x)\to 0$ as $|x|\to \infty$. Finally, we say that $u$ is trivial if $u\equiv 0$. This condition can be obtained e.g. by choosing $F\equiv 0$ or constant.
\end{definition}

Notice that standard solitons do not enter in the quasimonochromatic family considered above. Under Definition \ref{mono} we will show that \eqref{mKdVN} has \eqref{ansatzlamb} as the unique quasimonochromatic solutions in any dimension. 

\begin{remark}
The condition on $p$ at infinity is important, since mKdV ($N=1$, $q=3$) possesses periodic in space breathers, see e.g. \cite{KKS2,AMP2} for further details. Additionally, one has \emph{line breathers} $u(t,x)=B(t,x_1)$, natural extensions of \eqref{ansatzlamb}. 
\end{remark}

\begin{theorem}[$N=1$ case]\label{MT1} Assume $F$ real analytic in a neighborhood of $0$ with $F(0)=0$, and $N=1$. Then the breather solution \eqref{ansatzlamb} is the unique nonzero solution to gKdV (up to scaling and shifts symmetries of the equation) satisfying the quasimonochromatic form \eqref{ansatz}.
\end{theorem}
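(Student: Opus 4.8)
The plan is to substitute the quasimonochromatic ansatz \eqref{ansatz} directly into gKdV in the case $N=1$ and extract a rigid hierarchy of ODE constraints by exploiting the linear independence, over a suitable ring, of the trigonometric and hyperbolic modes generated by $\sin(2\alpha(x+mt))$ and the profile $p(x)$. Writing $\theta := 2\alpha(x+mt)$ and $s:=p(x)\sin\theta$, the time derivative $\partial_t u$ produces factors of $2\alpha m\, p(x)\cos\theta\, F''(s)$ and higher, while $\partial_x$ acting on $u = \partial_x(F(s))$ produces a polynomial-in-derivatives expression in $F^{(k)}(s)$, $p$, $p'$, $p''$, $\cos\theta$ and $\sin\theta$; the nonlinear term $2u^q$ contributes $2\partial_x((\partial_x F(s))^q)$, which by the chain rule is again such a polynomial but now with $F'(s)^q$ and its derivatives. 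The whole equation then has the schematic form $\sum_{j} a_j(x)\,\cos^{\varepsilon_j}\theta\, P_j(s)=0$ where each $P_j$ is built from the Taylor coefficients of $F$ at $0$. Here is where the Bell-polynomial algebra advertised in the abstract should be used: the coefficients of the composition $F(p\sin\theta)$ and of $(\partial_x F(p\sin\theta))^q$ organize themselves through Faà di Bruno / Bell polynomials, and this is what keeps the bookkeeping finite and structured.

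The key reduction is to treat $\cos\theta$ and $\sin\theta$ as algebraically independent from the "slow" variables $x$ (equivalently, restrict to the real line $\{t : \theta(x,t)\in 2\pi\Z + c\}$ for varying $c$, using that $m\neq 0$ must hold for a genuine breather — the case $m=0$ should be dispatched separately, or shown to force $p\equiv 0$ via the decay hypothesis). Using $\cos^2\theta = 1-\sin^2\theta$ to reduce all even powers of $\cos\theta$, the equation splits into two polynomial identities in $\sin\theta$ (the $\cos\theta$-free part and the $\cos\theta$-linear part), each of whose coefficients — as functions of $x$ through $p,p',p''$ and as polynomials in the Taylor data of $F$ — must vanish identically. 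The lowest-order coefficients in the Taylor expansion of $F$ at $0$ (using $F(0)=0$, so $F(s)=c_1 s + c_2 s^2 + \cdots$) should immediately force $q=3$: for $q=2$ the nonlinear contribution $2\partial_x((\partial_x F)^2)$ carries the wrong parity/degree structure to cancel against the linear part (and against $\partial_t u$), and similarly for $q\geq 4$ the degree in $s$ cannot be balanced. Once $q=3$ is forced, the remaining identities should pin down $F$ to be, up to the scaling $\lambda$ and the sign $-$ already listed as symmetries, the function $F(\xi)=-2\arctan(\sqrt3\,\xi)$ appearing in \eqref{ansatzlamb} — here the real-analyticity of $F$ near $0$ is what lets us conclude from the infinitely many Taylor-coefficient identities that $F$ is globally this specific arctan profile — and force $p(x)$ to satisfy a second-order ODE whose only solution decaying at infinity (with $p,p',p''\to 0$) is $p(x)=\operatorname{sech}(\sqrt3\,\alpha x)$ up to the translation shift $x_2$, with the frequency relation $m = -16\alpha$ (matching $-32\alpha^2 t$ in \eqref{ansatzlamb}).

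The main obstacle I expect is the combinatorial explosion in matching the nonlinear term $2\partial_x((\partial_x F(s))^q)$: unlike Mandel's scalar-field setting where the nonlinearity is undifferentiated and composition with a single trig function closes nicely, here one has an extra $x$-derivative outside and the argument $s=p(x)\sin\theta$ mixes a decaying amplitude with an oscillation, so the "elliptic PDE" satisfied by the profile is, as the authors note, strongly perturbed. Controlling this is precisely where the Bell-polynomial formalism earns its keep: the claim is that the coefficients of $\cos^{\varepsilon}\theta\,\sin^k\theta$ in $\partial_x^j (F(p\sin\theta))$ and in $(\partial_x F(p\sin\theta))^q$ are explicit universal polynomials in $\{F^{(\ell)}(p\sin\theta)\}_\ell$ and $\{p^{(\ell)}\}_\ell$, so that the vanishing conditions become a finite, tractable algebraic system. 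A secondary delicate point is justifying the passage from "the equation holds for all $(t,x)$" to "each trigonometric coefficient vanishes as a function of $x$" — this requires that, for fixed $x$ with $p(x)\neq 0$, the map $t\mapsto \theta$ is surjective onto a period (true since $m\neq 0$), and that on the complement $\{p=0\}$ the conclusion propagates by continuity/analyticity; one must be a little careful near zeros of $p$ and in ensuring $p\not\equiv 0$ on any interval, but the $C^2$ decay hypothesis together with the forced ODE for $p$ should rule out pathologies.
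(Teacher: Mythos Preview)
Your outline is in the same spirit as the paper's proof, but it stays at the level of a strategy and skips precisely the steps where the actual content lies; at two specific places it also asserts things that are not true as stated.

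\medskip

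\textbf{On ``$q=3$ is immediately forced by parity/degree''.} This is the main gap. For $q$ even the obstruction is not a parity mismatch in the $(\cos\theta,\sin\theta)$-expansion at all: what the paper shows is that the $\cos\theta$-odd component at lowest order yields the \emph{first-order} ODE
\[
6F^{(2)}(0)\,p\,\partial_1 p + 2^{q-1}\alpha^{q-2}(F^{(1)}(0))^q\,p^q = 0,
\]
and one then checks that every nontrivial solution either blows up in finite $x_1$ (for $q>2$) or fails to decay in both directions (for $q=2$). For $q$ odd with $q\neq 3$ the argument is subtler still: one must first derive \emph{two} independent second-order equations for $p$ in the $x_1$ variable (one from the equation itself at times with $\sin\theta=0$, one from an extra $\partial_t$ or $\partial_1$ applied and then evaluated), eliminate $\partial_1^{(3)}p$ between them, and arrive at an identity of the form
\[
(m-32\alpha^2) + \frac{96F^{(3)}(0)}{(F^{(1)}(0))^3}\,s^2 + 2^{q+1}q\,s^{\,q-1}=0,\qquad s=\alpha F^{(1)}(0)p(x),
\]
which is a polynomial in $s$ with a nonzero top coefficient and cannot vanish on the range of a nonconstant $p$. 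None of this is a one-line degree count; it is the heart of the proof, and your proposal does not indicate how to get there.

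\medskip

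\textbf{On the mechanism for extracting equations.} Expanding in $\sin\theta$ via the Taylor series of $F$ and then matching coefficients is formally equivalent to what the paper does, but it produces an infinite coupled system whose structure is opaque. The paper's device is cleaner and is what actually makes the computation close: evaluate the equation (and its $\partial_t^k$ and $\partial_1$ derivatives) at times $t_0$ with $s(x_1,t_0)=0$, so that every $F^{(k)}(G)$ becomes $F^{(k)}(0)$ and the Bell polynomials collapse to explicit expressions in $p,\partial_1 p,\partial_1^2 p$ times powers of $\partial_1 s(x_1,t_0)=\pm 2\alpha$. Choosing the two signs and adding/subtracting is what separates the two parities and yields, after one further $\partial_t$ and an $x_1$-integration using the decay of $p$, the pair
\[
\partial_1^{(2)}p + \Big(\tfrac12 m + 4\alpha^2\Big)p - 4\alpha^2\frac{F^{(3)}(0)}{F^{(1)}(0)}\,p^3 = 0,
\]
\[
-\Big(\tfrac12 m + 16\alpha^2\Big)p + 16\alpha^2\frac{F^{(3)}(0)}{F^{(1)}(0)}\,p^3 + 2^q\alpha^{q-1}(F^{(1)}(0))^{q-1}p^q = 0\quad (N=1).
\]
The second line is algebraic in $p$ and is what fixes $m=-32\alpha^2$ and $F^{(3)}(0)=-\tfrac12(F^{(1)}(0))^3$ when $q=3$; only then does the first line become the $\operatorname{sech}$ equation. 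Your proposal never isolates this pair, and without it the ``remaining identities pin down $F$ and $p$'' step has no traction. (Incidentally, the correct speed is $m=-32\alpha^2$, not $-16\alpha$.)

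\medskip

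\textbf{On determining $F$.} Once $p$ is known, the paper does not read $F$ off from the Taylor hierarchy; it substitutes $p=\zeta_0\operatorname{sech}(2\sqrt{3}\alpha x)$ back into the equation, evaluates at $x=0$, changes variable $z=-\zeta_0\sin(64\alpha^3 t)$, and obtains a genuine ODE for $F$ in $z$ which integrates to $F(z)=\pm 2\arctan(|F^{(1)}(0)|z/2)$. Relying on analyticity plus infinitely many Taylor identities, as you suggest, would in principle work but you have not exhibited those identities, and in practice the ODE route is what closes the argument.
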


In the higher dimensional case $N\geq 2$, the situation is strongly restrictive. 


\begin{theorem}[$N\geq 2$ case]\label{MT2}
Assume $F$ real analytic in a neighborhood of $0$, with $F(0)=0$. If $N\ge 2$, there is no nontrivial quasimonochromatic breather solution of \eqref{mKdVN}. 
\end{theorem}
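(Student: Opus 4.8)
The plan is to first remove the outer $\partial_{x_1}$, then pass to a $t$-free elliptic identity on a cylinder, and finally run a de Giorgi/Liouville-type rigidity argument in the $N-1$ transverse directions. Write $\theta:=2\alpha(x_1+mt)$, $v:=p(x)\sin\theta$, $w:=F(v)$, so that a quasimonochromatic solution \eqref{ansatz} is $u=\partial_{x_1}w$. Because $p,\nabla p,D^2p\to0$ and $F(0)=0$, every term of $\partial_t u+\partial_{x_1}(\Delta u+2u^q)$ (the left side of \eqref{mKdVN}) tends to $0$ as $x_1\to\pm\infty$ with the other coordinates fixed, so integrating the equation once in $x_1$ gives the equivalent identity $\partial_t w+\partial_{x_1}\Delta w+2(\partial_{x_1}w)^q=0$. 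Expanding $\partial^\gamma(F\circ v)$ by the Bell-polynomial (Fa\`a di Bruno) formula turns this into a polynomial relation in $F'(v),F''(v),\dots$ and in $p$ together with its spatial derivatives, $\sin\theta$ and $\cos\theta$. Assuming first $m\neq0$ — the case $m=0$ produces a stationary solution of $\Delta u+2u^q=0$ and is handled directly — the time $t$ enters only through $\theta$, so for each fixed $x$ the map $t\mapsto\theta$ is onto; hence the relation holds for all $(x,\theta)\in\mathbb R^N\times\mathbb T$, i.e. $W(x,\theta):=F(p(x)\sin\theta)$ satisfies a $t$-free PDE on the cylinder, and projecting onto $\theta$-harmonics yields a coupled system of elliptic equations for the profile $p$ and the Taylor data of $F$.

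Next I would isolate the governing \emph{linear} balance near spatial infinity, where $p$ and all its derivatives are small so $F$ may be replaced by its Taylor series at $0$. If $F'(0)=c_1\neq0$, the lowest-order-in-$p$ part of the cylinder equation is the linearization $c_1(\partial_t w+\partial_{x_1}\Delta w)=0$ — the nonlinearity $2(\partial_{x_1}w)^q$ is of order $q\ge2$ in $p$, hence negligible at this order — and matching the $\sin\theta$ and $\cos\theta$ harmonics produces
\[
\partial_{x_1}\big(\Delta p-12\alpha^2p\big)=0,\qquad \Delta p+2\,\partial_{x_1}^2p+(m-4\alpha^2)p=0
\]
to leading order as $|x|\to\infty$. (When the first nonzero Taylor coefficient of $F$ has order $\ell\ge2$, one runs the same scheme with $p^\ell$ in place of $p$, the nonlinearity still sitting at strictly higher order $\ell q>\ell$.) The first relation forces $\Delta p-12\alpha^2p$ to be independent of $x_1$, and $x_1$-decay makes it vanish; substituting $\Delta p=12\alpha^2p$ into the second relation leaves a constant-coefficient ODE in $x_1$ whose only $x_1$-decaying solutions have the form $p\sim A(x_2,\dots,x_N)\,e^{-\kappa x_1}$ with $\kappa>0$ determined by $m,\alpha$; then $\Delta p=12\alpha^2p$ forces the transverse profile to satisfy $\Delta_\perp A=\lambda A$ on $\mathbb R^{N-1}$ with $A\to0$ at infinity, where $\lambda=\lambda(m,\alpha)$.

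This is where the dimension enters and where the de Giorgi-type statement is needed. If $N=1$ there is \emph{no} transverse variable: the two leading relations are compatible only for one value of $m$, $p$ genuinely survives with exponential tails, and the full — not merely linearized — cylinder equation then pins $p$ down as a $\sech$ and $F$ as an arctangent; this is exactly Theorem \ref{MT1}, which I take as given. If $N\ge2$, however, one must show $A\equiv0$: a maximum-principle/Liouville argument disposes of $\lambda\ge0$ immediately, and for $\lambda<0$ (where, in high dimensions, spherical-wave solutions decay and so are not excluded a priori) I would go one further order in the expansion — or use a Pohozaev-type integral identity — to over-determine the pair $(p,m)$, forcing $m$ to the value for which $\lambda=0$, whence $A$ is harmonic and decaying, hence $A\equiv0$. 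Either way the leading coefficient of the $x_1\to\infty$ expansion of $p$ vanishes.

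It remains to upgrade $A\equiv0$ to $p\equiv0$, which I would do by iterating the previous step down the asymptotic expansion of $p$: each successive coefficient again solves a transverse elliptic equation with decay and hence vanishes, so $p$ vanishes identically near infinity; since the reduction shows $p$ solves a genuine (nonlinear) elliptic equation, unique continuation then yields $p\equiv0$ on all of $\mathbb R^N$, so $u\equiv0$, contradicting nontriviality and proving Theorem \ref{MT2}. I expect the main obstacle to be precisely this last block: justifying rigorously that the linearized transverse system really controls the asymptotics of $p$ (i.e. taming the composite and nonlinear terms so the order-by-order hierarchy closes), and carrying out the Liouville step in $\mathbb R^{N-1}$ for the possibly ``wrong-sign'' mass $\lambda$. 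The Bell-polynomial bookkeeping is what keeps the expansion in powers of $p$ manageable, and the transverse Liouville theorem is what converts ``$N\ge2$'' directly into ``no localized breather''.
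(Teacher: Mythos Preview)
Your proposal shares the paper's opening moves --- removing the outer $\partial_{x_1}$, expanding $F\circ v$ via Fa\`a di Bruno/Bell polynomials, and matching $\theta$-harmonics to produce elliptic constraints on $p$ --- but diverges at the crucial point, and the divergence contains a genuine gap.

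The gap is the transverse Liouville step. You correctly flag that when $\lambda<0$ the equation $\Delta_\perp A=\lambda A$ on $\mathbb R^{N-1}$ admits decaying (Bessel-type) solutions for $N-1\ge2$, so the maximum principle does not kill $A$. Your proposed remedy --- ``go one further order in the expansion, or use a Pohozaev-type identity, to force $m$ to the value for which $\lambda=0$'' --- is not an argument: you have not shown that the next-order system is overdetermined in the right way, and there is no reason to expect a Pohozaev identity on the \emph{linearized} transverse profile $A$ to pin down $m$. The subsequent ``iterate down the asymptotic expansion'' step compounds the problem, since you have not established that $p$ even \emph{has} such an expansion with separated $x_1$/transverse structure; that ansatz $p\sim A(x')e^{-\kappa x_1}$ was imposed, not derived. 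Finally, the appeal to unique continuation at the end requires $p$ to solve a genuine elliptic PDE on all of $\mathbb R^N$, which your asymptotic reasoning never produces.

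The paper avoids all of this by working with \emph{exact} identities rather than asymptotic ones. Matching harmonics at special times $t_0$ (where $\sin\theta=0$) and differentiating in $t$ yields, for $q$ odd, two exact nonlinear elliptic equations for $p$ on all of $\mathbb R^N$ (your two displayed relations are their linear parts, but the paper keeps the cubic and $p^q$ terms), together with a third ``gradient'' identity containing $|\nabla p|^2$ and $(\partial_{x_1}p)^2$ separately. From the first, Gidas--Ni--Nirenberg gives radial symmetry of $p$; feeding radiality into the gradient identity (which is \emph{not} rotationally invariant, since $\partial_{x_1}p$ appears with a different weight) forces $p_0'(r)\equiv0$, a contradiction. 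The case $q$ even is dispatched earlier by a first-order ODE in $x_1$ whose nontrivial solutions blow up or fail to decay. None of this requires Liouville theorems in the transverse directions or control of an asymptotic expansion.
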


Theorems \ref{MT1} and \ref{MT2} are rigidity results and are in concordance with the results obtained in \cite{MuPo} and \cite{MMPP}, where it was proved using different methods that pure-power subcritical gKdV and ZK models do not possess breathers. In the last two works, two conditions were needed: data in a suitable Sobolev space (usually $L^2$), and the global well-posedness of the model in that Sobolev space. Here we only require enough regularity of the exact solution, and sufficient decay of $p(x)$, without need of well-posedness theory.  Because of this, we can consider any pure power, despite the absence or lack of well-posedness. No particular size is required as well. Additionally, the method of proof combines elementary techniques and classical results in elliptic theory, showing a promising potential for applications in other dispersive models. Structural uniqueness properties have also been recently obtained in \cite{ACMPT} for the KP-II model, where the key solutions are nonlocalized in $\mathbb R^2$ and given by line solitons and line multisolitons. Nonexistence of breathers has been recently addressed in \cite{KMM,GG}.

\begin{remark}[On the conditions on $F$] 
The condition $F$ real analytic is necessary to avoid the case of nonzero functions with exactly zero Taylor expansion at zero. One example of this type is $F(s)=e^{-1/s^2}$, $F(0)=0$, for which all its derivatives vanish.
\end{remark}

\begin{remark}[On Gardner breathers]
If the nonlinearity is different from the pure power, there are well-known localized breathers. Indeed, the Gardner model is
\begin{equation}\label{Gardner}
\partial_t u + \partial_{x}(\partial_x^2 u +  u^2 + \mu u^3) = 0, \quad  \mu>0.
\end{equation}
This model possesses (stable) breather solutions \cite{Alejo}. Indeed,  if  $\al, \bt>0$ are such that $\Delta:=\al^2+\bt^2-\frac2{9\mu}>0$, then
\begin{equation*}\label{BGe}
B(t,x) :=   2\sqrt{\frac 2\mu}\partial_x\arctan\left(\frac{\mathcal G(y_1,y_2)}
{\mathcal F(y_1,y_2)}\right),
\end{equation*}
where
\begin{equation*}
\begin{aligned}
\mathcal G (y_1,y_2)& :=   \frac{\bt\sqrt{\al^2+\bt^2}}{\al\sqrt{\Delta}}\sin(\al y_1) -\frac{\sqrt2 \bt}{3\sqrt{\mu}\Delta} e^{\beta y_2},\\
\mathcal F(y_1,y_2) & :=   \cosh(\bt y_2)-\frac{\sqrt2 \bt}{3\sqrt{\mu}\al\sqrt{\Delta}} \frac{[\al\cos(\al y_1)-\bt\sin(\al y_1)]}{\sqrt{\al^2+\bt^2}},
\end{aligned}
\end{equation*}
and $y_1 = x+ \delta t$, $y_2 = x+ \ga t$ , $\delta := \al^2-3\bt^2$, $\ga :=3\al^2-\bt^2$, is a smooth non decaying solution to \eqref{Gardner}. The structure of the Gardner breather \eqref{BGe} does not follow the quasimonochromatic structure proposed in Theorems \ref{MT1} and \ref{MT2}, becoming the proof of uniqueness an interesting open problem.
\end{remark}

\begin{remark}
Assuming more complex structures, of ``polychromatic''  type, may lead to the entrance of new solutions into the scene, such as e.g. multibreathers. A first polychromatic structure is the one naturally given by the Gardner breather \eqref{BGe}. In that sense, the forthcoming step should be the study of suitable structures composed of at most two oscillatory independent parts.
\end{remark} 

\subsection{Ideas of proof} The proof of Theorems \ref{MT1} and \ref{MT2} is based in elementary ideas having in principle no correlation with a well-posedness theory. Mandel's work \cite{RainerMandel2021} remains as the main basis. Let us briefly explain the new ideas. First of all,  we rewrite \eqref{mKdVN} as 
\[
u(t,x)= \partial_{x_1} v(t,x), \quad v(t,x):=F(p(x)\sin (2\alpha(x_1+m t))),
\]
where $\alpha>0$ and $m\in\R$ are free parameters. Therefore (assuming that $v$ tends to zero as $x\to \infty$) $v$ satisfies the PDE
\begin{equation}\label{mZKqv}
  	\partial_t v+\partial_{x_1}(\Delta v)+2(\partial_{x_1}v)^q =0.
\end{equation}
This new equation has some important advantages with respect to the original model. First, it conserves the Airy structure, and translates the spatial derivative inside de polynomial. The term $F(p(x)\sin (2\alpha(x_1+m t)))$ is the heart of the quasimonochromatic structure. We will introduce Bell's polynomials (Section \ref{Sec:2}) to translate this structure into new conditions that must be satisfied either by $F$ or $p$. These special polynomials are key to preserve the algebra of \eqref{mZKqv} thanks to its nice relation with the derivative of compositions of functions, known as the Fa\`a di Bruno's formula. With this structure on hand, every operation can be expressed in terms of new Bell's polynomials of higher order (Section \ref{sec:algebra}). After this is done, an important part of this work will be devoted to show that
\[
F^{(2n)}(0)=0,  \quad \hbox{and} \quad F^{(1)}(0)=0 \implies F^{(n)}(0)=0,  \quad n\geq 0.
\]
(Section \ref{sec:propa1}.) This precisely tells us that $F$ is odd and cannot have zero derivative at zero. A second part of the proof is devoted to show key elliptic PDEs satisfied by $p$. In the case $q$ even the situation is simple and one can quickly discard this case by proving that the only allowed nontrivial $p$ are singular.  The case $q$ odd requires care since no particular bad condition is present and the case $q=3$, $N=1$ is integrable. We will prove a strongly rigidity property satisfied by any solution $p$ along the $x_1$ variable:
\[
\partial_{1}^{(2)}p(x) + \left( \frac12m +4 \alpha^{2}\right)p(x)- 4 \alpha^{2}\left( \frac{F^{(3)}(0)}{F^{(1)}(0)} \right)p(x)^{3}=0.
\]
This almost ensures that $p$ is the reciprocal of a hyperbolic cosine solution. This will be done in the case $N=1$ after finding that for any dimension, the Laplacian for the $x_c= (x_2,\ldots,x_N)$ coordinates satisfies
\[
\Delta_c p(x)- \left( \frac12m+16\alpha^{2}  \right)p(x)  +16\alpha^{2}\left(  \frac{F^{(3)}(0)}{F^{(1)}(0)}\right)p(x)^{3}  +2^{q}\alpha^{q-1} \left(F^{(1)}(0)\right)^{q-1} p(x)^{q}  =0.
\]
These two new equations will be used to discard existence of quasimonochromatic breathers in the case $N\geq 2$, and the uniqueness of the mKdV breather in $N=1$. In the 1D case, the idea is to use the second equation above to quickly fix the parameters and conclude that $p$ is an hyperbolic cosine. In the case $N\geq 2$, we follow some of the ideas by Mandel \cite{RainerMandel2021} and use elliptic PDE theory to discard any localized solution for $p$.

\subsection*{Organization of this paper} This paper is organized as follows: In Section \ref{Sec:2} we provide the elementary techniques needed for the proof of main results, including Bell's polynomials. Next, Section \ref{sec:algebra} is devoted to the rigorous description of the algebra associated to Bell's polynomials. Section \ref{sec:propa1} describes the propagation of the quasimonochromatic structure of mKdV breathers. Section \ref{adicional} expands further this algebra considering additional properties held by the ZK-Bell mixed algebra. In Section \ref{N1} we prove Theorem \ref{MT1} in the odd case. Finally, Section \ref{Ngen} is devoted to the proof of Theorem \ref{MT2}, and Theorem \ref{MT1} in the even case.


\section{Preliminaries}\label{Sec:2}

\subsection{Bell's polynomials}
Let $n\in\N-\{0\}$, $k\in\N$ and $k\leq n$. For $x_{1},x_{2},\dots,x_{n-k+1}\in\R$, let us introduce the Bell's polynomials \cite{Bell}  of order $n$ and index $k$ as follows: 
\[
B_{n,0}(x_{1},x_{2},\dots,x_{n-k+1})=0,
\]
and
\begin{equation}\label{Bell}
\begin{aligned}
& B_{n,k}(x_{1},x_{2},\dots,x_{n-k+1})\\
& \quad :=\sum \frac{n!}{j_{1}!j_{2}!\dots j_{n-k+1}!}\left( \frac{x_{1}}{1!}\right)^{j_{1}} \left( \frac{x_{2}}{2!}\right)^{j_{2}}\dots \left( \frac{x_{n-k+1}}{(n-k+1)!}\right)^{j_{n-k+1}}, \quad k\geq 1,
\end{aligned}
\end{equation}
where the sum is taken on all the sequences $(j_{1},j_{2},\ldots,j_{n-k+1})$ of nonnegative integers such that
\begin{equation}\label{indices}
\begin{cases}
j_1+j_2+\cdots+j_{n-k+1}=k, \\
j_1+2j_2+ 3j_3+\cdots+ (n-k+1) j_{n-k+1}=n. 
\end{cases}
\end{equation}
Being $n$ and $k$ fixed, one denotes $({\bf x_{(v)}:v}) := (x_{1},x_{2},\dots,x_{n-k+1})$, where ${\bf v}={\bf v}(n,k)$. In short, one has
\begin{equation}\label{Bell_compacto}
	\begin{aligned}
		B_{n,k}(x_{1},x_{2},\dots,x_{n-k+1})=: &~{} B_{n,k}(\bf{x_{(v)}:v})\\
		= &~{} \sum \frac{n!}{j_{1}!j_{2}!\dots j_{n-k+1}!}\prod_{a=1}^{n-k+1}\left( \frac{x_{a}}{a!}\right)^{j_{a}},
	\end{aligned}
\end{equation}
with indices $(j_1,\ldots,j_{n-k+1})$ following \eqref{indices}. Some well-known Bell's polynomials needed along this paper are
\begin{equation}\label{bells}
\begin{aligned}
& B_{n,1}(x_1,\ldots,x_n)= x_n, \quad B_{2,2}(x_1)=x_1^2,\\
& B_{3,2}(x_1,x_2)=3x_1x_2, \quad B_{3,3}(x_1)=x_1^3,\\
& B_{4,2}(x_1,x_2,x_3)=3x_2^2+4x_1x_3, \quad B_{4,3}(x_1,x_2)=6x_1^2x_2, \quad B_{4,4}(x_1)=x_1^4.
\end{aligned}
\end{equation}
Notice that all the presented examples represent homogenous polynomials of degree $k$.

\medskip

Additionally, for $a\in\N$ and $j\in\{1,2,\ldots,N\}$, we denote
\[
\partial_j^{(a)} := \partial_{x_j}^a.
\]

\begin{lemma}[Computational properties of Bell's polynomials]\label{Bell_lema1}
Let $n\in\N-\{0\}$, $k\in\N$ and $k\leq n$. Let $B_{n,k}(\bf{x_{(v)}:v})$ be a Bell's polynomial of order $n$ and index $k$. Let $F:\mathbb R \to \mathbb R$ and $G:\mathbb R^N \times \mathbb R \mapsto \mathbb R$ be sufficiently smooth. Then the following derivative of the composition is satisfied: for any $j\in\{t,1,\ldots,N\}$,
\[
\partial_{j}^{(n)}( F^{(\ell)}(G(x,t))) =\sum_{k=1}^{n}F^{(\ell+k)}(G(x,t))B_{n,k} \left( {\bf \partial_{j}^{(v)}G(x,t): v} \right),
\]
where, 
\begin{equation}\label{vector_de_derivadas}
 \left( {\bf \partial_{j}^{(v)}G(x,t): v} \right) = \left( \partial_j^{(1)} G(x,t),   \partial_j^{(2)} G(x,t), \ldots,  \partial_j^{(n-k+1)} G(x,t) \right).
\end{equation}
\end{lemma}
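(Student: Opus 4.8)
The plan is to recognize the asserted identity as the Fa\`a di Bruno formula in a single variable, and to prove that formula by induction on $n$ using the combinatorial recursion satisfied by Bell's polynomials together with the data \eqref{bells}.

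\emph{Reduction to one variable.} Fix $j\in\{t,1,\ldots,N\}$ and freeze all coordinates except the $j$-th one, writing $s$ for the $j$-th variable (so $s=x_j$ if $j\in\{1,\ldots,N\}$ and $s=t$ if $j=t$). Set $g(s):=G(x,t)$, a smooth function of the single real variable $s$, and $f:=F^{(\ell)}\in C^\infty(\mathbb{R})$. Then $\partial_j^{(n)}\bigl(F^{(\ell)}(G(x,t))\bigr)=\frac{d^n}{ds^n}\bigl(f(g(s))\bigr)$ and $\partial_j^{(a)}G(x,t)=g^{(a)}(s)$, and since $f^{(k)}=F^{(\ell+k)}$ the claim reduces to the classical identity
\[
\frac{d^n}{ds^n}\bigl(f(g(s))\bigr)=\sum_{k=1}^{n}f^{(k)}(g(s))\,B_{n,k}\bigl(g'(s),g''(s),\ldots,g^{(n-k+1)}(s)\bigr),
\]
which is Fa\`a di Bruno's formula (see \cite{Bell}).

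\emph{Induction on $n$.} The base case $n=1$ is the chain rule, since $B_{1,1}(x_1)=x_1$ by \eqref{bells}. Assuming the formula at level $n$, I would differentiate it once more in $s$, apply the product rule, and expand the derivative of each Bell polynomial via the elementary identity $\partial_{x_a}B_{n,k}=\binom{n}{a}B_{n-a,k-1}$ — a direct consequence of the explicit formula \eqref{Bell} (with the convention $B_{0,0}=1$; equivalently, the term $k=1$, where $B_{n,1}=x_n$, is handled by hand). Collecting in the resulting expression the coefficient of a fixed $f^{(m)}(g)$ for $1\le m\le n+1$ — the contribution $g'B_{n,m-1}(g',\ldots)$ coming from the $f^{(m)}$-part of the first (product-rule) sum, and the remaining contributions from the second sum — one obtains
\[
\sum_{a=0}^{n-m+1}\binom{n}{a}B_{n-a,m-1}\bigl(g',\ldots\bigr)\,g^{(a+1)},
\]
which is exactly $B_{n+1,m}\bigl(g',\ldots,g^{(n-m+2)}\bigr)$ by the standard three-term recursion for Bell's polynomials. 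This closes the induction and proves the lemma.

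\emph{On the main difficulty.} I do not anticipate a genuine obstacle here: the statement is classical and the only care needed is bookkeeping — keeping the upper summation limits consistent so that every $B_{n,k}$ receives precisely its $n-k+1$ arguments, and matching the collected coefficient with the recursion for $B_{n+1,m}$, including the degenerate terms $m=1$ and $m=n+1$. Should one prefer to avoid the recursion, an alternative proof expands both sides directly over the set partitions of $\{1,\ldots,n\}$ encoded by the index constraints \eqref{indices} and matches terms; but the inductive route is the shortest given that \eqref{Bell} and \eqref{bells} are already on hand.
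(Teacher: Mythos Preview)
Your proposal is correct; the identity you prove is precisely the classical Fa\`a di Bruno formula. Note, however, that the paper's own ``proof'' of this lemma is just a one-line citation: it says the result is the Fa\`a di Bruno formula and points to \cite{DB1,DB2}. So you have not merely reproduced the paper's argument but supplied an actual self-contained inductive proof where the paper was content to invoke the literature. Your reduction to one variable is clean, and the induction via the partial-derivative identity $\partial_{x_a}B_{n,k}=\binom{n}{a}B_{n-a,k-1}$ together with the standard recursion $B_{n+1,m}=\sum_a\binom{n}{a}x_{a+1}B_{n-a,m-1}$ is a perfectly standard route. The only minor remark is that the paper does not state either of these Bell-polynomial identities explicitly, so if you wanted your write-up to be fully independent you would include a short verification of them from \eqref{Bell}; but this is bookkeeping, as you correctly anticipate, and there is no genuine gap.
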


\begin{proof}
The previous results are usually known as the Fa\`a di Bruno's formula. See \cite{DB1,DB2}.
\end{proof}

\subsection{Rewriting of the problem}
Recall that we are seeking for quasimochromatic solutions to gKdV and ZK models 
\begin{equation}\label{A1}
	\partial_t u+\partial_{x_1}\left( \Delta u +2u^{q}\right)=0, \quad N\geq 1, \quad q=2,3,\ldots
\end{equation}
of the form
\begin{equation}\label{A11}
	u(t,x)= \partial_{x_1}F(p(x)\sin (2\alpha(x_1+m t))),
\end{equation}
where $\alpha>0$ and $m\in\R$ are free parameters. We will always assume that $p$ is nontrivial, otherwise the solution is trivial. Hence, from \eqref{A1} and \eqref{A11}, and under the decaying condition on $p$ described in Definition \ref{mono}, we have that
\begin{equation}\label{def_v}
	v(t,x):=F(p(x)\sin (2\alpha(x_1+m t)))
\end{equation}
will satisfy the simplified equation
\begin{equation}\label{A2}
	\partial_t v+\partial_{x_1}(\Delta v)+2(\partial_{x_1}v)^q =0.
\end{equation}
From now on, we shall concentrate our efforts in showing uniqueness/nonexistence for the model \eqref{A2}. For simplicity, let 
\begin{equation}\label{G_s}
G(x,t): =p(x)s(x_{1},t), \qquad s(x,t):=s(x_{1},t) =\sin (2\alpha(x_1+m t)),
\end{equation}
(the change in the order $(t,x)$ to $(x,t)$ is motivated by the elliptic character of the forthcoming techniques). Finally, recall that $B_{n,k}:=B_{n,k}(\bf{x_{(v)}:v})$ denotes the Bell's polynomial of order $n$ and index $k$, as introduced in \eqref{Bell_compacto}.
\begin{lemma}\label{eq_simplificada_0}
Let $q\in\{1,2,\ldots\}$, and $\alpha>0$ and $m\in\R$ free parameters. Consider $v$ as in \eqref{def_v}, and $G$, $s$ as in \eqref{G_s}. Then the following expansion holds true: 
\begin{equation}\label{eq_simplificada}
	\begin{split}
		&\partial_t v+\partial_{x_1}(\Delta v)+2(\partial_{x_1}v)^q\\
		& = F^{(1)}(G(x,t))p(x)\partial^{(1)}_{t}s(x_{1},t)+2\left( F^{(1)}(G(x,t))\partial^{(1)}_{1}G(x,t)\right)^{q}\\
		&\quad +\sum_{k=1}^{3}F^{(k)}(G(x,t))B_{3,k}\left( \bf{\partial^{(v)}_{1}G(x,t):v} \right)\\
		&\quad +\sum_{j=2}^{N}\sum_{k=1}^{2}\sum_{\substack{a_{1}+a_{2}+a_{3}=1 \\ 0\leq a_1,a_2,a_3\leq 1}}	\partial^{a_{1}}_{1} \left( F^{(k)}(G(x,t)) \right)\partial^{a_{2}}_{1} \left( B_{2,k} \left( {\bf \partial^{(v)}_{j}p(x):v } \right) \right)\partial^{a_{3}}_{1}\left(s(x_{1},t)^{k} \right).
	\end{split}
\end{equation}
\end{lemma}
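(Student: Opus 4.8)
The plan is to prove Lemma~\ref{eq_simplificada_0} by a direct computation, organizing the three spatial/temporal contributions $\partial_t v$, $\partial_{x_1}(\Delta v)$, and $2(\partial_{x_1}v)^q$ separately and then reading off how each one lands in the asserted right-hand side. Writing $G(x,t)=p(x)s(x_1,t)$ as in \eqref{G_s} and $v=F(G)$, the starting point is Lemma~\ref{Bell_lema1} (Fa\`a di Bruno / Bell's polynomials): for any coordinate $j\in\{t,1,\dots,N\}$ and any $\ell\ge 0$,
\[
\partial_j^{(n)}\big(F^{(\ell)}(G)\big)=\sum_{k=1}^n F^{(\ell+k)}(G)\,B_{n,k}\big({\bf\partial_j^{(v)}G:v}\big).
\]
First I would handle $\partial_t v$: since $\partial_t$ acts only through $s$, and $\partial_t s = 2\alpha m\,\partial_{x_1}$-free combination (concretely $s$ depends on $t$ only via $x_1+mt$), one has $\partial_t G = p(x)\,\partial_t s(x_1,t)$, and the $n=1$ case of the formula gives $\partial_t v = F^{(1)}(G)\,p(x)\,\partial_t^{(1)}s(x_1,t)$, which is exactly the first term on the right-hand side of \eqref{eq_simplificada}.

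Next I would treat the dispersive term $\partial_{x_1}(\Delta v)=\partial_1^{(3)}v+\sum_{j=2}^N \partial_1^{(1)}\partial_j^{(2)}v$. For the pure $x_1$ part, apply the formula with $n=3$, $j=1$, $\ell=0$, which produces exactly $\sum_{k=1}^3 F^{(k)}(G)\,B_{3,k}\big({\bf\partial_1^{(v)}G:v}\big)$, the second sum in \eqref{eq_simplificada}. For the mixed part $\partial_1\partial_j^{(2)}v$ with $j\ge 2$, I would first compute $\partial_j^{(2)}v$ via the formula with $n=2$: $\partial_j^{(2)}v=\sum_{k=1}^2 F^{(k)}(G)\,B_{2,k}\big({\bf\partial_j^{(v)}G:v}\big)$. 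Here one uses that $s$ does not depend on $x_j$ for $j\ge 2$, so $\partial_j^{(a)}G = s(x_1,t)^{?}$ — more precisely $\partial_j^{(a)}G = s(x_1,t)\,\partial_j^{(a)}p(x)$, hence by homogeneity of $B_{2,k}$ of degree $k$ (recorded after \eqref{bells}) one has $B_{2,k}\big({\bf\partial_j^{(v)}G:v}\big)=s(x_1,t)^k\,B_{2,k}\big({\bf\partial_j^{(v)}p:v}\big)$. Then applying $\partial_1^{(1)}$ and using the Leibniz rule on the product $F^{(k)}(G)\cdot B_{2,k}({\bf\partial_j^{(v)}p:v})\cdot s^k$ gives precisely the triple-indexed sum over $a_1+a_2+a_3=1$ in \eqref{eq_simplificada} (note $B_{2,k}({\bf\partial_j^{(v)}p:v})$ does not depend on $t$, and its $x_1$-derivative is allowed since $a_2$ can be $1$). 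Finally, for the nonlinear term $2(\partial_{x_1}v)^q$, the $n=1$, $j=1$ case gives $\partial_{x_1}v=F^{(1)}(G)\,\partial_1^{(1)}G$, so $2(\partial_{x_1}v)^q=2\big(F^{(1)}(G)\,\partial_1^{(1)}G(x,t)\big)^q$, matching the remaining term. Summing the four contributions reproduces \eqref{eq_simplificada} exactly.

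The only genuinely delicate bookkeeping is the factorization $B_{n,k}\big({\bf\partial_j^{(v)}G:v}\big)=s^k\,B_{n,k}\big({\bf\partial_j^{(v)}p:v}\big)$ in the mixed-derivative term, which relies on $s$ being constant in $x_j$ for $j\ge 2$ together with the degree-$k$ homogeneity of $B_{n,k}$; and then the correct application of the Leibniz rule to a triple product when reinstating the $\partial_{x_1}$ derivative, keeping track that $s^k$ still carries all the $x_1$- and $t$-dependence of that factor while $B_{2,k}({\bf\partial_j^{(v)}p:v})$ is a function of $x$ only (indeed of $x_2,\dots,x_N$ and $x_1$ through $p$, so $a_2=1$ is meaningful). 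Everything else is a bounded, explicit expansion with no analytical subtlety, so I would present it compactly: state the four pieces, invoke Lemma~\ref{Bell_lema1} for each, cite the homogeneity of $B_{n,k}$, and assemble. I expect the write-up to be short, with the homogeneity/Leibniz step being the one worth spelling out in a displayed line or two.
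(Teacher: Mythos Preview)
Your proposal is correct and follows essentially the same approach as the paper's proof: both compute $\partial_t v$, $\partial_1^{(3)}v$, $\partial_1\partial_j^{(2)}v$ for $j\ge 2$, and $(\partial_1 v)^q$ separately via Lemma~\ref{Bell_lema1}, use the degree-$k$ homogeneity of $B_{2,k}$ to factor out $s(x_1,t)^k$ in the mixed-derivative term, and then apply Leibniz on the resulting triple product before assembling. The only difference is presentational.
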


\begin{proof}
We compute using the Chain rule:
\begin{equation*}
	\begin{split}
		\partial_{t}v(t,x)&= \partial_{t}( F\circ G) (x,t)\\
		&= F^{(1)}(G(x,t))\partial^{(1)}_{t}G(x,t)= F^{(1)}(G(x,t))p(x)\partial^{(1)}_{t}s(x_{1},t).
	\end{split}
\end{equation*}
Next we will compute $\partial_{x_1}(\Delta v)$. If $j\neq 1$, then thanks to Lemma \ref{Bell_lema1} and the fact that Bell polynomials are homogenous of degree k,
\begin{equation*}
	\begin{split}
		(\partial^{2}_{j}v)(t,x)&=\partial^{2}_{j}(F\circ G)(x,t)\\
		&=\sum_{k=1}^{2}F^{(k)}(G(x,t))B_{2,k}\left( {\bf \partial^{(v)}_{j}G(x,t):v } \right)\\
		&=\sum_{k=1}^{2}F^{(k)}(G(x,t))B_{2,k}\left( {\bf \partial^{(v)}_{j}p(x):v } \right)s(x_{1},t)^{k}.
	\end{split}
\end{equation*}
Hence, using again Lemma \ref{Bell_lema1}, 
\[
\begin{aligned}
& \partial_{1}\partial^{2}_{j} (F\circ G)(x,t) \\
 &~{} = \sum_{k=1}^{2} \partial_{1} \left( F^{(k)}(G(x,t))B_{2,k}\left( {\bf \partial^{(v)}_{j}p(x):v } \right) s(x_{1},t)^{k} \right)\\
 &~{} = \sum_{k=1}^{2}\sum_{a_{1}+a_{2}+a_{3}=1}	\partial^{a_{1}}_{1} \left( F^{(k)}(G(x,t)) \right)\partial^{a_{2}}_{1}\left( B_{2,k} \left( {\bf \partial^{(v)}_{j}p(x,t):v } \right) \right)\partial^{a_{3}}_{1} \left(s(x_{1},t)^{k} \right).
\end{aligned}
\]
On the other hand,
\begin{equation*}
	\begin{split}
		\partial^{3}_{1}v(t,x)&= \partial^{3}_{1}(F\circ G)(x,t)\\
		&=\sum_{k=1}^{3}F^{(k)}(G(x,t))B_{3,k} \left( {\bf \partial^{(v)}_{1}G(x,t):v} \right).
	\end{split}
\end{equation*}
Finally, by classical chain rule,
\begin{equation*}
		(\partial_{1}v(x,t))^{q} = (\partial_{1}(F\circ G) (x,t))^{q}=\left( F^{(1)}(G(x,t))\partial^{(1)}_{1}G(x,t)\right)^{q}.
\end{equation*}
Gathering the previous computations, and replacing, we get \eqref{eq_simplificada}.
\end{proof}

In the following sections, we will study the consequences associated to the identity \eqref{eq_simplificada}.

\section{Algebra of Bell's polynomials}\label{sec:algebra}

The purpose of this section is to describe how the gKdV-ZK quasimonochromatic breather algebra interacts with Bell's polynomials. The symbol
\[
M(x,t)\Big|_{(x,t_{0})}
\]
indicates evaluation of the function $M$ at the particular point $(x,t_{0})\in\mathbb R^2$.

\subsection{Basic properties} Our first result concerns the product law evaluated at a sequence of particular times. 

\begin{lemma}\label{product_law0}
Let $a,b$ be nonnegative integers, and $t_{0}\in\mathbb{R}$  such that $s(x_{1},t_{0})=0$. Then the following identities are satisfied:
\begin{enumerate}
	\item[$(i)$] If $a\leq h$ and   $a$ and $h$ have the same parity,
	\begin{equation}\label{product_law00}
		\begin{split}
			&\partial_{1}^{(h)}\left(s(x_{1},t)^{a}\cdot \partial_{t}s(x_{1},t)^{b} \right) \Big|_{(x,t_{0})} = \left(\partial_{1}s(x_{1},t_{0}) \right)^{a+b} A^{1}_{a,b,h},
		\end{split}
	\end{equation}
	where $A^{1}_{a,b,h}$ is the explicit constant
	\begin{equation}\label{A1b}
A^{1}_{a,b,h}= (-1)^{ \frac12(h-a)}(2\alpha)^{h-a}\sum_{g=a}^{h}\sum_{\sum_{i=1}^{a} c_{i}=g \atop \sum_{j=1}^{b} d_{j}=h-g}\binom{h}{c_{1},\dots,c_{a},d_{1},\dots,d_{b}},
	\end{equation}
and where the sum extends over all $a$-tuples $(c_{1},\dots,c_{a})$ and $b$-tuples $(d_{1},\dots,d_{b})$   of non-negative integers  with
\begin{equation*}
	c_{1}+\dots+c_{a}=g \quad \hbox{ and } \quad	d_{1}+\dots+d_{b}=h-g
\end{equation*}
such that
\begin{equation*}
	c_{j} \mbox{ is odd } \forall j\in \{ 1,\dots,a\}, \quad  \mbox{ and } \quad d_{j} \mbox{ is even } \forall j\in \{ 1,\dots,b\}.
\end{equation*}	
	
	\item[$(ii)$]  In any other case, 
	\[
	\partial_{1}^{(h)}\left(s(x_{1},t)^{a}\cdot \partial_{t}s(x_{1},t)^{b} \right) \Big|_{(x,t_{0})}=0.
	\]
\end{enumerate} 
\end{lemma}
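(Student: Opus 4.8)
The statement concerns evaluating $x_1$-derivatives of the product $s(x_1,t)^a \cdot (\partial_t s(x_1,t))^b$ at a time $t_0$ for which $s(x_1,t_0)=0$. The plan is to expand everything explicitly using the structure $s(x_1,t)=\sin(2\alpha(x_1+mt))$, for which $\partial_t s = 2\alpha m \cos(2\alpha(x_1+mt)) = m\, \partial_1 s$ up to the obvious constant; more precisely $\partial_t s = m\,\partial_1 s$, so $(\partial_t s)^b = m^b (\partial_1 s)^b$. This reduces the computation to understanding $\partial_1^{(h)}\bigl(s^a (\partial_1 s)^b\bigr)$ evaluated where $s$ vanishes, up to the constant $m^b$ (which I notice is absent in the stated $A^1_{a,b,h}$, so presumably it is absorbed or the normalization differs — I would double-check and fold it into $A^1$). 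Write $s(x_1,t_0) = \sin\theta$ and $\partial_1 s(x_1,t_0) = 2\alpha\cos\theta$ where $\theta = 2\alpha(x_1+mt_0)$; the hypothesis $s(x_1,t_0)=0$ means $\sin\theta = 0$, hence $\cos\theta = \pm 1$ and $\partial_1 s(x_1,t_0) = \pm 2\alpha \ne 0$.

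\textbf{Key steps.} First I would apply the general Leibniz rule to the product $s^a \cdot (\partial_t s)^b$, but it is cleaner to first write the product as a product of $a+b$ copies of first-order factors: $s^a(\partial_t s)^b$ (after pulling out $m^b$) is a product of $a$ factors equal to $s$ and $b$ factors equal to $\partial_1 s$. Apply the multinomial Leibniz rule: $\partial_1^{(h)}$ of a product of $a+b$ functions equals $\sum \binom{h}{c_1,\dots,c_a,d_1,\dots,d_b} \prod \partial_1^{(c_i)} s \cdot \prod \partial_1^{(d_j)}(\partial_1 s)$, summed over all compositions of $h$. Now evaluate at $(x,t_0)$: since $\partial_1^{(c)} s = (2\alpha)^c$ times $\sin$ or $\cos$ of $\theta$ (with a sign depending on $c \bmod 4$), and $\sin\theta = 0$, a factor $\partial_1^{(c_i)} s$ vanishes unless $c_i$ is odd; similarly $\partial_1^{(d_j)}(\partial_1 s) = \partial_1^{(d_j+1)} s$ vanishes unless $d_j+1$ is odd, i.e. $d_j$ is even. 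This immediately gives part $(ii)$: if any admissible term survives we need every $c_i$ odd and every $d_j$ even, which forces $g = \sum c_i \geq a$ with $g \equiv a \pmod 2$ and $h - g = \sum d_j$ even, hence $h \equiv g \equiv a \pmod 2$ and $h \geq g \geq a$; if the parity/inequality conditions on $(a,h)$ fail, all terms vanish.

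\textbf{Finishing part $(i)$ and the main obstacle.} For the surviving terms, each odd-order factor $\partial_1^{(c_i)} s$ at $\theta$ equals $(2\alpha)^{c_i}(-1)^{(c_i-1)/2}\cos\theta = (2\alpha)^{c_i}(-1)^{(c_i-1)/2}\cdot\frac{1}{2\alpha}\partial_1 s(x_1,t_0)$, and each even-order factor $\partial_1^{(d_j+1)} s$ equals $(2\alpha)^{d_j+1}(-1)^{d_j/2}\cos\theta = (2\alpha)^{d_j}(-1)^{d_j/2}\partial_1 s(x_1,t_0)$. Multiplying the $a+b$ such factors in a term with $\sum c_i = g$, $\sum d_j = h-g$ produces exactly $(\partial_1 s(x_1,t_0))^{a+b}$ times $(2\alpha)^{(g-a)+(h-g)} = (2\alpha)^{h-a}$ times a product of signs $\prod_i (-1)^{(c_i-1)/2}\prod_j(-1)^{d_j/2}$. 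The remaining task is to check that this combined sign is the same for every surviving term and equals $(-1)^{(h-a)/2}$, so that it can be pulled out of the sum, leaving precisely the multinomial sum in \eqref{A1b}. This sign bookkeeping is the one genuinely delicate point: one computes $\sum_i \frac{c_i-1}{2} + \sum_j \frac{d_j}{2} = \frac{1}{2}\bigl((\sum c_i) - a + \sum d_j\bigr) = \frac{1}{2}(g - a + h - g) = \frac{h-a}{2}$, which is an integer by the parity constraint, so the sign is $(-1)^{(h-a)/2}$ uniformly, independent of the particular composition. Pulling out $(\partial_1 s)^{a+b}$, $(2\alpha)^{h-a}$, and this sign, the residual sum over admissible compositions is exactly $A^1_{a,b,h}$ as written (with the implicit reindexing by $g$), completing the proof. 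The main obstacle is thus purely combinatorial — getting the sign exponent and the admissibility conditions exactly right — rather than conceptual; there is no analysis beyond elementary trigonometric derivative identities.
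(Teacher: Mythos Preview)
Your proof is correct and follows essentially the same route as the paper: expand via the (multinomial) Leibniz rule, observe that at $s(x_1,t_0)=0$ a factor $\partial_1^{(c_i)}s$ survives only for odd $c_i$ and $\partial_1^{(d_j)}\partial_t s$ only for even $d_j$, deduce the parity/size constraints giving part~$(ii)$, and then collect the uniform sign $(-1)^{(h-a)/2}$ and power $(2\alpha)^{h-a}$ for part~$(i)$. The only cosmetic difference is that the paper applies Leibniz in two stages (first splitting $h=g+(h-g)$ between $s^a$ and $(\partial_t s)^b$, then multinomial on each), whereas you apply the multinomial rule directly to all $a+b$ factors; the resulting multinomial coefficients agree via $\binom{h}{c_1,\dots,c_a,d_1,\dots,d_b}=\binom{h}{g}\binom{g}{c_1,\dots,c_a}\binom{h-g}{d_1,\dots,d_b}$. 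Your observation about the factor $m^b$ (coming from $\partial_t s=m\,\partial_1 s$) is well spotted: the paper's own computation in fact yields $(\partial_1 s)^a(\partial_t s)^b$ before the final line, so the statement as written tacitly identifies $(\partial_t s)^b$ with $(\partial_1 s)^b$; this does not affect any downstream use in the paper since only the vanishing/nonvanishing and the dependence structure matter there.
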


\begin{proof}
Let $a,b$ be nonnegative integers. First, recall that by the classical Leibniz rule,
\begin{equation}\label{ejemplo1}
\partial_{1}^{(g)} \left( s(x_{1},t)^{a} \right) =\sum_{\sum_{i=1}^{a} c_{i}=g}\binom{g}{c_{1},\dots,c_{a}} \prod_{i=1}^{a}\partial_{1}^{(c_{i})}s(x_{1},t) ,
\end{equation}
where the sum extends over all $a$-tuples $(c_{1},\dots,c_{a})$ of non-negative integers with $c_{1}+\dots+c_{a}=g$.  Similarly, for $h\geq g,$
\[
\partial_{1}^{(h-g)} \left( \partial_{t}s(x_{1},t)^{b} \right) =\sum_{\sum_{j=1}^{b} d_{j}=h-g}\binom{h-g}{d_{1},\dots,d_{b}}\prod_{j=1}^{b}\partial_{1}^{(d_{j})}\partial_{t}s(x_{1},t).
\]
Here the sum extends over all $b$-tuples $(d_{1},\dots,d_{b})$ of non-negative integers with $d_{1}+\dots+d_{b}=h-g$.  Since $a,b$ are nonnegative integers, and using again the classical 1D Leibniz formula
\begin{equation}\label{intermedio}
	\begin{split}
		\partial_{1}^{(h)}\left(s(x_{1},t)^{a}\cdot \partial_{t}s(x_{1},t)^{b} \right) = &~{} \sum_{g=0}^{h}\binom{h}{g} \partial_{1}^{(g)} \left( s(x_{1},t)^{a} \right) \partial_{1}^{(h-g)} \left( \partial_{t}s(x_{1},t)^{b} \right)\\
		=&~{}\sum_{g=0}^{h}\binom{h}{g}\left(\sum_{\sum_{i=1}^{a} c_{i}=g}\binom{g}{c_{1},\dots,c_{a}} \prod_{i=1}^{a}\partial_{1}^{(c_{i})}s(x_{1},t) \right) \\
		& \qquad \times \left( \sum_{\sum_{j=1}^{b} d_{j}=h-g}\binom{h-g}{d_{1},\dots,d_{b}}\prod_{j=1}^{b}\partial_{1}^{(d_{j})}\partial_{t}s(x_{1},t)\right).
	\end{split}
\end{equation}
Let $t_{0}$ is such that $s(x_{1},t_{0})=0$. Let us assume that  particular $a$-tuples $(c_{1},\dots,c_{a})$ and $b$-tuples $(d_{1},\dots,d_{b})$  with
\begin{equation*}
	c_{1}+\dots+c_{a}=g \quad \mbox{ and } 	\quad d_{1}+\dots+d_{b}=h-g
\end{equation*}
are such that

\begin{equation*}
	\prod_{i=1}^{a}\partial_{1}^{(c_{i})}s(x_{1},t_{0})\neq0,\quad \mbox{ and } \quad \prod_{j=1}^{b}\partial_{1}^{(d_{j})}\partial_{t}s(x_{1},t_{0})\neq0 .  
\end{equation*}
Then we have that
\begin{equation*}
	 c_{j} \mbox{ is odd } \forall j\in \{ 1,\dots,a\}, \quad  \mbox{ and } \quad d_{j} \mbox{ is even } \forall j\in \{ 1,\dots,b\}.
\end{equation*}
So, if we write $c_{i}=2\ell_{c_{i}}+1$ and $d_{j}=2\ell_{d_{j}}$, we have:
\[
	\sum_{i=1}^{a}c_{i}=\sum_{i=1}^{a}(2\ell_{c_{i}}+1)=\sum_{i=1}^{a}(2\ell_{c_{i}})+a=g,
\]
and
\[
\sum_{j=1}^{b}d_{j}=2\ell_{d_{j}}=h-g.
\]
In particular, $g-a$ and $h-g$ are even numbers  and $a$ and $h$ have the same parity. Moreover,
\begin{equation*}
	\begin{split}
		\prod_{i=1}^{a}\partial_{1}^{(c_{i})}s(x_{1},t)= &	\prod_{i=1}^{a}\partial_{1}^{(2\ell_{c_{i}} +1)}s(x_{1},t) \\
		=& 	\prod_{i=1}^{a}\partial_{1} \left( \partial_{1}^{(2\ell{c_{i}} )}s(x_{1},t) \right) \\
		=&	\prod_{i=1}^{a}(-1)^{\ell_{c_{i}}}(2\alpha)^{2\ell_{c_{i}}}\partial_{1}s(x_{1},t)\\
		= &~{} \left(\partial_{1}s(x_{1},t) \right)^{a} \prod_{i=1}^{a}(-1)^{\ell_{c_{i}}}(2\alpha)^{2\ell_{c_{i}}}= \left(\partial_{1}s(x_{1},t) \right)^{a} (-1)^{ (g-a)/2}(2\alpha)^{g-a},
	\end{split}	
\end{equation*}
and 
\begin{equation*}
	\begin{split}
		\prod_{j=1}^{b}\partial_{1}^{(d_{j})}\partial_{t}s(x_{1},t)= 	& \prod_{j=1}^{b}\partial_{t}(\partial_{1}^{(2\ell_{d_{j}})}s(x_{1},t)) \\
		=& 	\prod_{j=1}^{b} (-1)^{\ell_{d_{j}}}(2\alpha)^{2\ell_{d_{j}}}\partial_{t}s(x_{1},t) \\
		= &  ~{}\left( \partial_{t}s(x_{1},t) \right)^{b}\prod_{j=1}^{b} (-1)^{\ell_{d_{j}}}(2\alpha)^{2\ell_{d_{j}}}=\left(\partial_{t}s(x_{1},t)\right)^{b}(-1)^{ (h-g)/2}(2\alpha)^{h-g}.
	\end{split}
\end{equation*}
Hence, from \eqref{intermedio}, if either $a > h$ or $a$ and $h$ have different parity, then we have that
\begin{equation*}
	\begin{split}
		\partial_{1}^{(h)}\left(s(x_{1},t)^{a}\cdot \partial_{t}s(x_{1},t)^{b} \right) =0.
	\end{split}
\end{equation*}
In general, using \eqref{intermedio}, 
\begin{equation*}
	\begin{split}
		&\partial_{1}^{(h)}\left(s(x_{1},t)^{a}\cdot \partial_{t}s(x_{1},t)^{b} \right) \Big|_{(x,t_{0})} \\
		&=\sum_{g=a}^{h}\binom{h}{g}\left(\sum_{\sum_{i=1}^{a} c_{i}=g}\binom{g}{c_{1},\dots,c_{a}}  \partial_{1}s(x_{1},t)^{a} (-1)^{ (g-a)/2}(2\alpha)^{g-a}\right)\times \\
		&\quad \left(\sum_{\sum_{j=1}^{b} d_{j}=h-g}\binom{h-g}{d_{1},\dots,d_{b}} \partial_{t}s(x_{1},t_{0})^{b}(-1)^{ (h-g)/2}(2\alpha)^{h-g}\right) \\
		&=\left(\partial_{1}s(x_{1},t_{0}) \right)^{a+b}\left( (-1)^{ (h-a)/2}(2\alpha)^{h-a}\sum_{g=a}^{h}\sum_{\sum_{i=1}^{a} c_{i}=g \atop \sum_{j=1}^{b} d_{j}=h-g}\binom{h}{c_{1},\dots,c_{a},d_{1},\dots,d_{b}} \right).
	\end{split}
\end{equation*}
where the sum extends over all $a$-tuples $(c_{1},\dots,c_{a})$ and $b$-tuples $(d_{1},\dots,d_{b})$  with
\begin{equation*}
	c_{1}+\dots+c_{a}=g\quad  \mbox{ and } 	\quad d_{1}+\dots+d_{b}=h-g
\end{equation*}
such that
\begin{equation*}
	c_{j} \mbox{ is odd } \forall j\in \{ 1,\dots,a\},\quad \mbox{ and } \quad d_{j} \mbox{ is even } \forall j\in \{ 1,\dots,b\}.
\end{equation*}
This proves \eqref{product_law00} with $A^{1}_{a,b,h}$ as in \eqref{A1b}.
\end{proof}

\begin{lemma}\label{PEB}
	Let $t_{0}$ be any time such that $s(x_{1},t_{0})=0$. One has 
	\begin{equation}\label{PEB_0}
		B_{n,k} \left( {\bf\partial_{t}^{(v)} s(x_{1},t_{0}) :  v} \right)= A_{n,k}^{0}\partial_{t}s(x_{1},t_{0})^{k},
	\end{equation}
where $A_{n,k}^{0}$ is a constant which does not depend on $x$ and $t_{0}$.  Moreover,
\begin{enumerate}
\item[$(i)$] If $n-k$ is odd then $A_{n,k}^{0}=0$.
\item[$(ii)$] If $n-k$ is even,  $A_{n,k}^{0}\neq 0$ and it is given by
\begin{equation}\label{A_nk}
A_{n,k}^{0}:=(-1)^{(n-k)/2}(2\alpha m)^{n-k}\sum \frac{n!}{j_{1}!j_{2}!\cdots j_{n-k+1}!}\left(\prod_{i=0}^{\lfloor \frac{n-k}{2} \rfloor}\left( (2i+1)!\right)^{-j_{2i+1}}\right),
\end{equation}
where the sum is taken over all sequences $j_{1},\dots,j_{n-k+1}$  of nonnegative integers which satisfy the constitutive properties \eqref{indices} and also satisfy 
\begin{equation}\label{suma0k}
 \sum_{i=1}^{\lfloor \frac{n-k+1}{2} \rfloor} j_{2i}=0 \quad \mbox{ and } \quad \sum_{i=0}^{\lfloor \frac{n-k}{2} \rfloor}j_{2i+1}=k.
\end{equation}
\end{enumerate}
\end{lemma}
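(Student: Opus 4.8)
The plan is to exploit the concrete form $s(x_1,t)=\sin\theta(t)$, with $\theta(t):=2\alpha(x_1+mt)$ and $\theta'(t)=2\alpha m$. The first step is to evaluate the entries $\partial_t^{(a)}s(x_1,t_0)$, $a=1,\dots,n-k+1$, at a time $t_0$ with $s(x_1,t_0)=0$. Since $\partial_t^{(2i)}s=(-1)^i(2\alpha m)^{2i}\sin\theta$ and $\partial_t^{(2i+1)}s=(-1)^i(2\alpha m)^{2i+1}\cos\theta$, and since $\sin\theta(t_0)=0$ while $\partial_t s(x_1,t_0)=2\alpha m\cos\theta(t_0)$, one obtains
\begin{equation*}
\partial_t^{(2i)}s(x_1,t_0)=0,\qquad \partial_t^{(2i+1)}s(x_1,t_0)=(-1)^i(2\alpha m)^{2i}\,\partial_t s(x_1,t_0).
\end{equation*}
(The second identity uses $m\neq0$; if $m=0$ then $\partial_t s\equiv0$, both sides of \eqref{PEB_0} vanish identically, and there is nothing to prove, so I may assume $m\neq0$.)

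Next I would substitute $x_a=\partial_t^{(a)}s(x_1,t_0)$ into the defining sum \eqref{Bell_compacto} for $B_{n,k}$. As $x_{2i}=0$ for every $i\geq1$, any term carrying a factor $(x_{2i}/(2i)!)^{j_{2i}}$ with $j_{2i}\geq1$ vanishes, so only multi-indices with $j_{2i}=0$ for all $i$ survive; equivalently $\sum_i j_{2i}=0$ and, since the remaining $j_a$ still sum to $k$, $\sum_i j_{2i+1}=k$. For such a multi-index the second constitutive relation of \eqref{indices} reads $\sum_i(2i+1)j_{2i+1}=n$, i.e. $2\sum_i i\,j_{2i+1}=n-k$. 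This settles part $(i)$ at once: if $n-k$ is odd, no admissible multi-index exists, the sum is empty, and $A_{n,k}^0=0$.

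When $n-k$ is even, I would use $\sum_i i\,j_{2i+1}=(n-k)/2$ — valid on every surviving term — to rewrite $\prod_i\bigl(x_{2i+1}/(2i+1)!\bigr)^{j_{2i+1}}$ as $\bigl(\partial_t s(x_1,t_0)\bigr)^{k}\,(-1)^{(n-k)/2}(2\alpha m)^{n-k}\prod_i\bigl((2i+1)!\bigr)^{-j_{2i+1}}$; since the prefactor $(-1)^{(n-k)/2}(2\alpha m)^{n-k}$ no longer depends on the multi-index, it together with $\partial_t s(x_1,t_0)^k$ factors out of the sum, producing exactly \eqref{PEB_0} with $A_{n,k}^0$ as in \eqref{A_nk} and with the index conditions \eqref{suma0k}; in particular $A_{n,k}^0$ depends only on $n,k,\alpha,m$, not on $(x,t_0)$. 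For the nonvanishing claim in $(ii)$, I would observe that every summand in \eqref{A_nk} is strictly positive, so it suffices to exhibit one admissible sequence: since $n\equiv k\pmod2$ and $n\geq k\geq1$, the integer $n-k+1$ is odd, and the choice $j_1=k-1,\ j_{n-k+1}=1$ (or simply $j_1=k$ when $n=k$) satisfies both \eqref{indices} and \eqref{suma0k}; hence the sum is a nonempty sum of positive numbers and $A_{n,k}^0\neq0$.

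The argument is essentially bookkeeping, and the only points that require genuine care are verifying that the set of surviving multi-indices after the substitution is \emph{precisely} the one encoded in \eqref{suma0k}, and that the arithmetic $2\sum_i i\,j_{2i+1}=n-k$ is exactly what forces the parity dichotomy of $(i)$--$(ii)$ and yields the clean power $(2\alpha m)^{n-k}$. One must also keep the degenerate case $m=0$ in mind: there \eqref{PEB_0} still holds trivially, but \eqref{A_nk} no longer defines a nonzero constant when $n>k$.
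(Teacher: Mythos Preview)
Your proof is correct and follows essentially the same route as the paper's: expand the Bell polynomial, split the entries by parity of the derivative order, observe that even-order time derivatives of $s$ vanish at $t_0$, and then read off the parity obstruction $2\sum_i i\,j_{2i+1}=n-k$ and the formula \eqref{A_nk}. Your version is in fact slightly more complete: you explicitly verify the nonvanishing claim in $(ii)$ by exhibiting the admissible multi-index $j_1=k-1,\ j_{n-k+1}=1$ (the paper's proof stops after deriving the formula), and you correctly flag the degenerate case $m=0$, in which \eqref{PEB_0} holds trivially but the ``$A_{n,k}^0\neq0$'' part of $(ii)$ fails for $n>k$.
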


\begin{proof}
From \eqref{Bell} and \eqref{Bell_compacto},
\begin{equation*}
\begin{split}
&B_{n,k} \left( {\bf\partial_{t}^{(v)} s(x_{1},t): v} \right)\\
&=\sum \frac{n!}{j_{1}!j_{2}!\cdots j_{n-k+1}!}\left(\frac{\partial^{(1)}_{t} s(x_{1},t)}{1!} \right)^{j_{1}}\left(\frac{\partial^{(2)}_{t} s(x_{1},t)}{2!} \right)^{j_{2}}\cdots \left(\frac{\partial^{(n-k+1)}_{t} s(x_{1},t)}{(n-k+1)!} \right)^{j_{(n-k+1)}} \\
&=\sum\frac{n!}{j_{1}!j_{2}!\cdots j_{n-k+1}!}\prod_{i=1}^{n-k+1}\left(\frac{\partial^{(i)}_{t} s(x_{1},t)}{i!} \right)^{j_{i}},
\end{split}
\end{equation*}
where the sum is taken following \eqref{indices}. Splitting in even and odd derivatives, and recalling \eqref{G_s},
\begin{equation*}
\begin{split}
&B_{n,k} \left( {\bf\partial_{t}^{(v)} s(x_{1},t): v} \right)\\
&=\sum \frac{n!}{j_{1}!j_{2}!\cdots j_{n-k+1}!}\prod_{i=1}^{\lfloor \frac{n-k+1}{2} \rfloor}\left(\frac{\partial^{(2i)}_{t} s(x_{1},t)}{(2i)!} \right)^{j_{2i}}\prod_{i=0}^{\lfloor \frac{n-k}{2} \rfloor}\left(\frac{\partial^{(2i+1)}_{t} s(x_{1},t)}{(2i+1)!} \right)^{j_{2i+1}}\\
&=\sum \frac{n!}{j_{1}!j_{2}!\cdots j_{n-k+1}!}\prod_{i=1}^{\lfloor \frac{n-k+1}{2} \rfloor}\left(\frac{(-1)^{i}(2\alpha m)^{2i}s(x_{1},t)}{(2i)!} \right)^{j_{2i}}\prod_{i=0}^{\lfloor \frac{n-k}{2} \rfloor}\left(\frac{(-1)^{i}(2\alpha m)^{2i}\partial_{t}s(x_{1},t) }{(2i+1)!} \right)^{j_{2i+1}}\\
&=\sum \frac{n!}{j_{1}!j_{2}!\cdots j_{n-k+1}!}\left( \prod_{i=1}^{\lfloor \frac{n-k+1}{2} \rfloor}\left(\frac{(-1)^{i}(2\alpha m)^{2i}}{(2i)!} \right)^{j_{2i}}\prod_{i=0}^{\lfloor \frac{n-k}{2} \rfloor}\left(\frac{(-1)^{i}(2\alpha m)^{2i}}{(2i+1)!} \right)^{j_{2i+1}}\right) \times\\
& \qquad \hspace{3.5cm}s(x_{1},t)^{	S_{j_{i}}^{e}}\cdot \partial_{t}s(x_{1},t)^{S_{j_{i}}^{o} }.
\end{split}
\end{equation*}
Here 	
\begin{eqnarray*}
	S_{j_{i}}^{e}:=\sum_{i=1}^{\lfloor \frac{n-k+1}{2} \rfloor} j_{2i} \quad \mbox{ and } \quad S_{j_{i}}^{o}:=\sum_{i=0}^{\lfloor \frac{n-k}{2} \rfloor}j_{2i+1}.
\end{eqnarray*}
So, if $t_{0}\in\mathbb{R}$ is  such that $s(x_{1},t_{0})=0$, then a term in the previous sum is different from zero if and only if $j_{1},\dots,j_{n-k+1}$ are such that 
\begin{equation*}
	S_{j_{i}}^{e}=0, \quad  S_{j_{i}}^{o}=k \quad \mbox{ and } \quad \sum_{i=0}^{\lfloor \frac{n-k}{2} \rfloor}(2i+1)j_{2i+1}=n.
\end{equation*}
This is nothing but \eqref{suma0k}. Therefore
\begin{eqnarray*}
2\sum_{i=0}^{\lfloor \frac{n-k}{2} \rfloor}ij_{2i+1}+\sum_{i=0}^{\lfloor \frac{n-k}{2} \rfloor}j_{2i+1}=n,
\end{eqnarray*}
i.e.
\begin{eqnarray*}
	2\sum_{i=0}^{\lfloor \frac{n-k}{2} \rfloor}ij_{2i+1}=n-k.
\end{eqnarray*}
We conclude that if  $n-k$  is odd then $B_{n,k}\left( {\bf\partial_{t}^{(v)} s(x_{1},t): v} \right) \Big|_{(x,t_{0})}= 0$. It follows that
\begin{equation*}
	\begin{split}
		&	B_{n,k} \left( {\bf\partial_{t}^{(v)} s(x_{1},t): v} \right) \Big|_{(x,t_{0})}\\
		&=\sum \frac{n!}{j_{1}!j_{2}!\cdots j_{n-k+1}!}\left(\prod_{i=0}^{\lfloor \frac{n-k}{2} \rfloor}\left(\frac{(-1)^{i}(2\alpha m)^{2i}}{(2i+1)!} \right)^{j_{2i+1}}\right)\partial_{t}s(x_{1},t_{0})^{S_{j_{i}}^{o} }\\
		&=\partial_{t}s(x_{1},t_{0})^{k }(-1)^{(n-k)/2}(2\alpha m)^{n-k}\sum \frac{n!}{j_{1}!j_{2}!\cdots j_{n-k+1}!}\left(\prod_{i=0}^{\lfloor \frac{n-k}{2} \rfloor}\left( (2i+1)!\right)^{-j_{2i+1}}\right).
	\end{split}
\end{equation*}
This concludes the proof of \eqref{PEB_0} after checking that $A^0_{n,k}$ is given by \eqref{A_nk}.	
\end{proof}
The following result will be not necessary, but it is left for future computations.
\begin{corollary}
	Let $t_{0}$  such that $s(x_{1},t_{0})=0$, then 
\begin{equation*}
	B_{n,k}({\bf\partial_{1}^{(v)} s(x_{1},t_{0}): v} )=A_{1,n,k}\partial_{1}s(x_{1},t_{0})^{k},
\end{equation*}
where $A_{1,n,k}$ is a constant which does not depend on $x$ and $t_{0}$.  
\end{corollary}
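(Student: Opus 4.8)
The plan is to mirror, almost line for line, the proof of Lemma \ref{PEB}, with the time derivative $\partial_t$ replaced by the spatial derivative $\partial_1$. The one computation that has to be redone is the spatial analogue of the relations $\partial_t^{(2i)}s=(-1)^i(2\alpha m)^{2i}s$ and $\partial_t^{(2i+1)}s=(-1)^i(2\alpha m)^{2i}\partial_t s$: since $s(x_1,t)=\sin(2\alpha(x_1+mt))$, differentiating in $x_1$ yields
\[
\partial_1^{(2i)}s(x_1,t)=(-1)^i(2\alpha)^{2i}s(x_1,t),\qquad \partial_1^{(2i+1)}s(x_1,t)=(-1)^i(2\alpha)^{2i}\partial_1 s(x_1,t),
\]
so the entire argument goes through under the formal substitution $2\alpha m \to 2\alpha$ and $\partial_t s \to \partial_1 s$.

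First I would expand $B_{n,k}\big({\bf \partial_{1}^{(v)}s(x_{1},t):v}\big)$ via the definition \eqref{Bell_compacto} and split the product over the index $a\in\{1,\dots,n-k+1\}$ into even indices $a=2i$ and odd indices $a=2i+1$. Substituting the two identities above, each summand factors as a constant depending only on $\alpha$, $n$, $k$ and the multi-index $(j_1,\dots,j_{n-k+1})$, times $s(x_1,t)^{S^e}\,\partial_1 s(x_1,t)^{S^o}$, with $S^e=\sum_{i\ge1}j_{2i}$ and $S^o=\sum_{i\ge0}j_{2i+1}$, exactly as in the proof of Lemma \ref{PEB}.

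Next I would evaluate at a time $t_0$ with $s(x_1,t_0)=0$. A summand survives only when $S^e=0$, and then the constitutive relations \eqref{indices} force $S^o=k$ together with $\sum_{i\ge0}(2i+1)j_{2i+1}=n$, hence $2\sum_{i\ge0}i\,j_{2i+1}=n-k$; in particular all surviving summands carry the same power $\partial_1 s(x_1,t_0)^k$. Collecting them gives
\[
B_{n,k}\big({\bf \partial_{1}^{(v)}s(x_{1},t_{0}):v}\big)=A_{1,n,k}\,\partial_1 s(x_1,t_0)^k,
\]
where $A_{1,n,k}=0$ if $n-k$ is odd, and, for $n-k$ even,
\[
A_{1,n,k}=(-1)^{(n-k)/2}(2\alpha)^{n-k}\sum\frac{n!}{j_1!\cdots j_{n-k+1}!}\prod_{i=0}^{\lfloor (n-k)/2\rfloor}\big((2i+1)!\big)^{-j_{2i+1}},
\]
the sum running over multi-indices satisfying \eqref{indices} and $S^e=0$. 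This constant depends neither on $x$ nor on $t_0$, which is the assertion.

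Since the argument is a verbatim transcription of Lemma \ref{PEB} up to the indicated substitution, I do not expect a genuine obstacle. The only point worth flagging is that at a zero of $s$ one has $\cos(2\alpha(x_1+mt_0))=\pm1$, hence $\partial_1 s(x_1,t_0)=\pm 2\alpha\neq 0$, so the factored form is never vacuous; the $t_0$-independence of $A_{1,n,k}$ (as opposed to that of $\partial_1 s(x_1,t_0)^k$, which may change sign with the chosen zero) is exactly what the statement records.
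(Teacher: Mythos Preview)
Your proof is correct and is precisely the argument the paper has in mind: the corollary is stated immediately after Lemma \ref{PEB} without a separate proof, signalling that it follows by the same computation with $\partial_t$ replaced by $\partial_1$ and $2\alpha m$ by $2\alpha$, exactly as you carry out.
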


Our next result concerns the calculation of terms of the form $\partial_{1}^{(h)}( B_{n,k}({\bf \partial_{t}s(x_{1},t)}))$. 
{\color{black}
\begin{lemma}\label{ADhBts}
	Let  $h$ a positive integer  and $t_{0}$  such that $s(x_{1},t_{0})=0$ , then 
	\begin{equation}\label{ADhBts_0}
	\partial_{1}^{(h)} \left(  B_{n,k}({\bf \partial_{t}s(x_{1},t) }) \right) \Big|_{(x,t_{0})} =A^{h}_{n,k} \left( \partial_{t}s(x_{1},t_{0}) \right)^{k},
	\end{equation}
where $A^{h}_{n,k}$ is a constant which does not depend on $x$ and $t_{0}$. Moreover,  
\begin{itemize}
\item if $n$  is even we have that $A^{h}_{n,k}=0$ if $k$ and $h$ have different parity.
\item If $n$ is  odd then $A^{h}_{n,k}=0$ if $k$ and $h$ have the same parity.
\end{itemize}
\end{lemma}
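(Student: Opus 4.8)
The plan is to reduce the computation of $\partial_1^{(h)}(B_{n,k}({\bf \partial_t s(x_1,t)}))\big|_{(x,t_0)}$ to the product-law evaluation already established in Lemma \ref{product_law0}, combined with the explicit form of $B_{n,k}({\bf \partial_t^{(v)} s(x_1,t)})$ before evaluation that was obtained inside the proof of Lemma \ref{PEB}. Concretely, I would first expand $B_{n,k}$ using \eqref{Bell_compacto}, splitting each factor $\partial_t^{(i)} s(x_1,t)$ into even and odd $i$ exactly as in the proof of Lemma \ref{PEB}. Since $\partial_t^{(2i)} s = (-1)^i (2\alpha m)^{2i} s$ and $\partial_t^{(2i+1)} s = (-1)^i (2\alpha m)^{2i} \partial_t s$, every monomial in the Bell sum is a constant (depending on $\alpha,m$, the multinomial coefficient, and the sequence $(j_i)$) times $s(x_1,t)^{S^e_{j_i}} \cdot (\partial_t s(x_1,t))^{S^o_{j_i}}$, where $S^e_{j_i}+S^o_{j_i}=k$. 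Thus before evaluation
\[
B_{n,k}({\bf \partial_t^{(v)} s(x_1,t)}) = \sum_{(j_i)} C_{(j_i)}\, s(x_1,t)^{S^e_{j_i}}\cdot (\partial_t s(x_1,t))^{S^o_{j_i}},
\]
with the sum over sequences satisfying \eqref{indices}.

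Next I would apply $\partial_1^{(h)}$ to this finite sum and evaluate at $(x,t_0)$ with $s(x_1,t_0)=0$, invoking Lemma \ref{product_law0} termwise with $a=S^e_{j_i}$ and $b=S^o_{j_i}$. Part $(ii)$ of that lemma kills every term unless $a\le h$ and $a\equiv h \pmod 2$; in the surviving cases part $(i)$ gives a constant multiple of $(\partial_1 s(x_1,t_0))^{a+b}=(\partial_1 s(x_1,t_0))^k$. But on $\{s(x_1,t_0)=0\}$ one has $\partial_1 s(x_1,t_0) = \pm 2\alpha\cos(\cdots) = \pm(2\alpha/(2\alpha m))\,\partial_t s(x_1,t_0)$ (using $s(x_1,t)=\sin(2\alpha(x_1+mt))$, so $\partial_t s = 2\alpha m \cos$, $\partial_1 s = 2\alpha\cos$), hence $(\partial_1 s)^k = m^{-k}(\partial_t s)^k$ at such points. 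Collecting all surviving terms yields \eqref{ADhBts_0} with $A^h_{n,k}$ an explicit $x,t_0$-independent constant, namely a weighted sum over those sequences $(j_i)$ satisfying \eqref{indices} for which $S^e_{j_i}\le h$ and $S^e_{j_i}\equiv h\pmod 2$, of the products $C_{(j_i)}\cdot m^{-k}\cdot A^1_{S^e_{j_i},S^o_{j_i},h}$.

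Finally, for the parity dichotomy I would track the parity of $S^e_{j_i}$ against that of $k$ and $n$. From \eqref{indices} we have $\sum_a j_a = k$ and $\sum_a a\, j_a = n$; subtracting, $\sum_a (a-1) j_a = n-k$, i.e. $\sum_{i} (2i-1) j_{2i} + \sum_i 2i\, j_{2i+1} = n-k$, so $\sum_i j_{2i} \equiv n-k \pmod 2$, that is $S^e_{j_i}\equiv n-k \pmod 2$. Therefore a term survives only if $S^e_{j_i}\equiv h$, which forces $n-k\equiv h$, i.e. $n\equiv k+h\pmod 2$. When $n$ is even this says $k$ and $h$ must have the same parity, so $A^h_{n,k}=0$ when they differ; when $n$ is odd it says $k$ and $h$ must have opposite parity, so $A^h_{n,k}=0$ when they agree. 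This is precisely the claimed statement.

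The main obstacle I anticipate is purely bookkeeping: carefully matching the combinatorial constraints from Lemma \ref{product_law0} (oddness of the $c_i$, evenness of the $d_j$, the splitting index $g$) with the Bell-polynomial index constraints \eqref{indices} after the even/odd split, and making sure the conversion factor between $\partial_1 s$ and $\partial_t s$ on the zero set of $s$ is handled uniformly so that the final constant $A^h_{n,k}$ genuinely does not depend on $x$ or $t_0$. The parity computation itself is short once the identity $S^e_{j_i}\equiv n-k\pmod 2$ is isolated; everything else is assembling constants that are never needed explicitly.
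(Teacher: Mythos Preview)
Your proposal is correct and follows essentially the same route as the paper's own proof: expand $B_{n,k}({\bf \partial_t^{(v)} s})$ via the even/odd split from the proof of Lemma~\ref{PEB} into a sum of terms $J_{(j_i)}\, s^{S^e_{j_i}}(\partial_t s)^{S^o_{j_i}}$, apply $\partial_1^{(h)}$ termwise using Lemma~\ref{product_law0}, and deduce the parity claims from $S^e_{j_i}\equiv n-k\pmod 2$ (equivalently, the paper phrases this as $n\equiv S^o_{j_i}\pmod 2$). Your explicit conversion $(\partial_1 s)^k = m^{-k}(\partial_t s)^k$ on the zero set of $s$ is in fact cleaner than the paper, which glosses over this step; note that no $\pm$ is needed there since $\partial_1 s = m^{-1}\partial_t s$ identically.
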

\begin{proof}
The proof is similar to that of Lemma \ref{PEB}. From \eqref{Bell_compacto} and splitting even and odd derivatives,
\begin{equation*}
\begin{split}
&	B_{n,k} \left( {\bf\partial_{t}^{(v)} s(x_{1},t): v} \right)\\
&=\sum\frac{n!}{j_{1}!j_{2}!\cdots j_{n-k+1}!}\prod_{i=1}^{n-k+1}\left(\frac{\partial^{(i)}_{t} s(x_{1},t)}{i!} \right)^{j_{i}} \\
&=\sum \frac{n!}{j_{1}!j_{2}!\cdots j_{n-k+1}!}\prod_{i=1}^{\lfloor \frac{n-k+1}{2} \rfloor}\left(\frac{\partial^{(2i)}_{t} s(x_{1},t)}{(2i)!} \right)^{j_{2i}}\prod_{i=0}^{\lfloor \frac{n-k}{2} \rfloor}\left(\frac{\partial^{(2i+1)}_{t} s(x_{1},t)}{(2i+1)!} \right)^{j_{2i+1}}.
\end{split}
\end{equation*}
where the sum is taken over all sequences $j_{1},\dots,j_{n-k+1}$  of nonnegative integers which satisfy the constitutive properties \eqref{indices} . From \eqref{G_s},
\begin{equation*}
\begin{split}
&B_{n,k} \left( {\bf\partial_{t}^{(v)} s(x_{1},t): v} \right)\\
&=\sum \frac{n!}{j_{1}!j_{2}!\cdots j_{n-k+1}!}\prod_{i=1}^{\lfloor \frac{n-k+1}{2} \rfloor}\left(\frac{(-1)^{i}(2\alpha m)^{2i}s(x_{1},t)}{(2i)!} \right)^{j_{2i}}\prod_{i=0}^{\lfloor \frac{n-k}{2} \rfloor}\left(\frac{(-1)^{i}(2\alpha m)^{2i}\partial_{t}s(x_{1},t) }{(2i+1)!} \right)^{j_{2i+1}}\\
&=\sum \frac{n!}{j_{1}!j_{2}!\cdots j_{n-k+1}!}\prod_{i=1}^{\lfloor \frac{n-k+1}{2} \rfloor}\left(\frac{(-1)^{i}(2\alpha m)^{2i}}{(2i)!} \right)^{j_{2i}} \\
& \qquad \qquad \qquad\qquad\qquad \times \prod_{i=0}^{\lfloor \frac{n-k}{2} \rfloor}\left(\frac{(-1)^{i}(2\alpha m)^{2i} }{(2i+1)!} \right)^{j_{2i+1}}s(x_{1},t)^{	S_{j_{i}}^{e}}\cdot \left(\partial_{t}s(x_{1},t) \right)^{S_{j_{i}}^{o} }\\
&=\sum J_{1,\dots,(n-k+1)}s(x_{1},t)^{	S_{j_{i}}^{e}}\cdot \left(\partial_{t}s(x_{1},t) \right)^{S_{j_{i}}^{o} }.
\end{split}
\end{equation*}
Here
\begin{eqnarray*}
S_{j_{i}}^{e}:=\sum_{i=1}^{\lfloor \frac{n-k+1}{2} \rfloor} j_{2i} \quad \mbox{ and } \quad S_{j_{i}}^{o}:=\sum_{i=0}^{\lfloor \frac{n-k}{2} \rfloor}j_{2i+1}.
\end{eqnarray*}
Additionally, $J_{1,\dots,(n-k+1)}$ is the explicit constant
\begin{eqnarray*}
	J_{1,\dots,(n-k+1)}:=\frac{n!}{j_{1}!j_{2}!\cdots j_{n-k+1}!}\prod_{i=1}^{\lfloor \frac{n-k+1}{2} \rfloor}\left(\frac{(-1)^{i}(2\alpha m)^{2i}}{(2i)!} \right)^{j_{2i}}\prod_{i=0}^{\lfloor \frac{n-k}{2} \rfloor}\left(\frac{(-1)^{i}(2\alpha m)^{2i} }{(2i+1)!} \right)^{j_{2i+1}}.
\end{eqnarray*}
Hence
\begin{equation*}
	\begin{split}
		\partial_{1}^{(h)}B_{n,k}\left( {\bf\partial_{t}^{(v)} s(x_{1},t): v} \right) = \sum J_{1,\dots,(n-k+1)} \partial_{1}^{(h)}\left( s(x_{1},t)^{	S_{j_{i}}^{e}}\cdot \partial_{t}s(x_{1},t)^{S_{j_{i}}^{o} }\right).
	\end{split}
\end{equation*}	
From Lemma \ref{product_law0},
\begin{equation*}
	\partial_{1}^{(h)}\left(s(x_{1},t)^{	S_{j_{i}}^{e}}\cdot \partial_{t}s(x_{1},t)^{S_{j_{i}}^{o} } \right) \Big|_{(x,t_{0})}=s(x_{1},t_{0})^{k} A^{1}_{S_{j_{i}}^{e},S_{j_{i}}^{o},h},
\end{equation*}
where 
\begin{eqnarray*}
A^{1}_{S_{j_{i}}^{e},S_{j_{i}}^{o},h}=0 \mbox{ if } S_{j_{i}}^{e}>h  \mbox{ or } S_{j_{i}}^{e}  \mbox{ and } h \mbox{ have different parity.}
\end{eqnarray*}
On the other hand, from  \eqref{indices}, we have that
\begin{equation*}
S_{j_{i}}^{e}+S_{j_{i}}^{o}=k  \quad \mbox{ and } \quad n=\sum_{i=1}^{n-k+1} ij_{i}=\sum_{i=1}^{\lfloor \frac{n-k+1}{2} \rfloor} 2ij_{2i} +\sum_{i=0}^{\lfloor \frac{n-k}{2} \rfloor}(2i+1)j_{2i+1},
\end{equation*}
which implies that
\begin{equation*}
n=2\sum_{i=1}^{\lfloor \frac{n-k+1}{2} \rfloor} ij_{2i} +2\sum_{i=0}^{\lfloor \frac{n-k}{2} \rfloor}ij_{2i+1}+\sum_{i=0}^{\lfloor \frac{n-k}{2} \rfloor}j_{2i+1}.
\end{equation*}
We conclude that $n$ and $S_{j_{i}}^{o}$ have the same parity.  Finally,  if $n$ is even, then $k$ and $S_{j_{i}}^{e}$ have the same parity. Therefore 
\begin{eqnarray*}
	A^{1}_{S_{j_{i}}^{e},S_{j_{i}}^{o},h}=0,
\end{eqnarray*}
if $k$ and $h$ have different parity. On the other hand,  if $n$ is odd, then $k$ and $S_{j_{i}}^{e}$ have the different parity. Hence
\begin{eqnarray*}
	A^{1}_{S_{j_{i}}^{e},S_{j_{i}}^{o},h}=0. 
\end{eqnarray*}
if $k$ and $h$ have the same parity. This concludes the proof of \eqref{ADhBts_0}.
\end{proof}

}

\begin{lemma}\label{lemma_B_DG}
	Let $t_{0}$  such that $s(x_{1},t_{0})=0$, then 
	\begin{equation}\label{lemma_B_DG_0}
		B_{n,k} \left(  {\bf\partial_{1}^{(v)}G(x,t_{0}): v} \right)=C_{n,k}(x)\partial_{1}s(x_{1},t_{0})^{k},
	\end{equation}
	where $C_{n,k}(x)$ does not depend on $t_{0}$ and it is defined in terms of Bell's polynomials as
\begin{equation}\label{C_nk}
\begin{aligned}	
		C_{n,k}(x):= &~{} B_{n,k} \left( {\bf   \sum_{\ell=0}^{\lfloor \frac{v-1}{2} \rfloor}\binom{v}{2\ell+1}\partial^{(v-(2\ell+1))}_{1}p(x)(-1)^{\ell}(2\alpha)^{2\ell} }: v \right)\\
	= &~{} \sum \frac{n!}{j_{1}!j_{2}!\dots j_{n-k+1}!}\prod_{a=1}^{n-k+1}\left( \frac{ \sum_{\ell=0}^{\lfloor \frac{a-1}{2} \rfloor}\binom{a}{2\ell+1}\partial^{(a-(2\ell+1))}_{1}p(x)(-1)^{\ell}(2\alpha)^{2\ell} }{a!}\right)^{j_{a}}.
\end{aligned}
\end{equation}	
\end{lemma}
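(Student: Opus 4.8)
The plan is to mimic the computation in Lemma \ref{PEB}, replacing the time variable $t$ by the space variable $x_1$ and the function $s$ by $G = p(x)s(x_1,t)$, and then to track the effect of evaluating at a zero $t_0$ of $s(x_1,\cdot)$. First I would expand the Bell polynomial $B_{n,k}\left({\bf\partial_1^{(v)}G(x,t):v}\right)$ using its definition \eqref{Bell_compacto}, so that the entries are the derivatives $\partial_1^{(a)}G(x,t)$ for $a = 1,\dots,n-k+1$. Since $G(x,t) = p(x)s(x_1,t)$, the Leibniz rule gives
\[
\partial_1^{(a)}G(x,t) = \sum_{\ell=0}^{a}\binom{a}{\ell}\partial_1^{(a-\ell)}p(x)\,\partial_1^{(\ell)}s(x_1,t).
\]
Now I would invoke the key fact about $s$: since $\partial_1^{(\ell)}s = (-1)^{\ell/2}(2\alpha)^\ell s$ for $\ell$ even and $(-1)^{(\ell-1)/2}(2\alpha)^{\ell-1}\partial_1 s$ for $\ell$ odd, and since $s(x_1,t_0)=0$, only the odd-$\ell$ terms survive after evaluation at $t_0$. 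Writing $\ell = 2m+1$ one obtains
\[
\partial_1^{(a)}G(x,t)\Big|_{(x,t_0)} = \left(\sum_{m=0}^{\lfloor (a-1)/2\rfloor}\binom{a}{2m+1}\partial_1^{(a-(2m+1))}p(x)\,(-1)^{m}(2\alpha)^{2m}\right)\partial_1 s(x_1,t_0),
\]
which is exactly the coefficient appearing inside $C_{n,k}(x)$ in \eqref{C_nk}, times the single common factor $\partial_1 s(x_1,t_0)$.

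The second step is bookkeeping of the homogeneity. Since each Bell polynomial $B_{n,k}$ is homogeneous of degree $k$ in its arguments (as noted after \eqref{bells}), substituting the factorized entries above pulls out exactly $k$ copies of $\partial_1 s(x_1,t_0)$, i.e. a factor $\partial_1 s(x_1,t_0)^k$, leaving behind precisely $B_{n,k}$ evaluated at the entries $\sum_{\ell=0}^{\lfloor (v-1)/2\rfloor}\binom{v}{2\ell+1}\partial_1^{(v-(2\ell+1))}p(x)(-1)^\ell(2\alpha)^{2\ell}$. This establishes \eqref{lemma_B_DG_0} with $C_{n,k}(x)$ given by the first line of \eqref{C_nk}; the second line of \eqref{C_nk} is then just the explicit sum-over-partitions formula \eqref{Bell_compacto} applied to that particular vector of entries. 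Finally, since the entries inside $C_{n,k}(x)$ involve only $p(x)$ and its $x_1$-derivatives — and not $t_0$ — the quantity $C_{n,k}(x)$ is manifestly independent of $t_0$, as claimed.

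The only genuine subtlety — and the step I would treat most carefully — is making sure the evaluation-at-$t_0$ operation and the degree-$k$ factorization commute correctly: one must check that in the homogeneous sum defining $B_{n,k}$, every monomial $\prod_a (\cdot)^{j_a}$ has $\sum_a j_a = k$, so that precisely $k$ factors of $\partial_1 s(x_1,t_0)$ are extracted from each monomial uniformly, allowing $\partial_1 s(x_1,t_0)^k$ to be factored out of the whole sum rather than term by term. This is guaranteed by the first constitutive relation in \eqref{indices}. Apart from this, the argument is a direct transcription of the computation in Lemma \ref{PEB} with $(s, \partial_t)$ replaced by $(G, \partial_1)$ and the simplification that $p(x)$ is a $t$-independent multiplier, so I do not expect any real obstacle.
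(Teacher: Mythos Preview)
Your proposal is correct and follows essentially the same approach as the paper: compute $\partial_1^{(a)}G$ by Leibniz, split the $s$-derivatives by parity so that evaluation at $t_0$ kills the even-order terms, and then use the constraint $\sum_a j_a = k$ from \eqref{indices} to factor $\partial_1 s(x_1,t_0)^k$ out of the Bell polynomial sum. The paper carries out the factorization by writing the sum \eqref{Bell_compacto} explicitly rather than appealing to the abstract homogeneity of $B_{n,k}$, but this is purely cosmetic.
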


\begin{proof}
	For any $a\in\mathbb{N}$ and any $(x,t)\in$ we have
	\begin{equation*}
		\begin{split}
			\partial^{(a)}_{1}G(x,t)& =\partial^{(a)}_{1}(p(x)s(x_{1},t))\\
			& =\sum_{b=0}^{a}\binom{a}{b}\partial^{(a-b)}_{1}p(x)\partial^{(b)}_{1}s(x_{1},t)\\
			&=\sum_{\ell=0}^{\lfloor \frac{a}{2} \rfloor}\binom{a}{2\ell}\partial^{(a-2\ell)}_{1}p(x)\partial^{(2\ell)}_{1}s(x_{1},t)+\sum_{\ell=0}^{\lfloor \frac{a-1}{2} \rfloor}\binom{a}{2\ell+1}\partial^{(a-(2\ell+1))}_{1}p(x)\partial^{(2\ell+1)}_{1}s(x_{1},t)\\
			&=\sum_{\ell=0}^{\lfloor \frac{a}{2} \rfloor}\binom{a}{2\ell}\partial^{(a-2\ell)}_{1}p(x)(-1)^{\ell}(2\alpha)^{2\ell}s(x_{1},t) \\
			& \quad +\sum_{\ell=0}^{\lfloor \frac{a-1}{2} \rfloor}\binom{a}{2\ell+1}\partial^{(a-(2\ell+1))}_{1}p(x)(-1)^{\ell}(2\alpha)^{2\ell}\partial_{1}s(x_{1},t).
		\end{split}
	\end{equation*}
	In particular, if  $t_{0}$ is such that $s(x_{1},t_{0})=0$ we have
	\begin{equation}\label{corcho}
			\partial^{(a)}_{1}G(x,t_{0})=\sum_{\ell=0}^{\lfloor \frac{a-1}{2} \rfloor}\binom{a}{2\ell+1}\partial^{(a-(2\ell+1))}_{1}p(x)(-1)^{\ell}(2\alpha)^{2\ell}\partial_{1}s(x_{1},t_{0}).
	\end{equation}
	Hence, by definition of Bell's polynomials \eqref{Bell_compacto}, and replacing \eqref{corcho},
	\begin{equation*}
		\begin{split}
			&B_{n,k}( {\bf\partial_{1}^{(v)}G(x,t_{0}): v})\\
			&=\sum \frac{n!}{j_{1}!j_{2}!\dots j_{n-k+1}!}\prod_{a=1}^{n-k+1}\left( \frac{ \partial^{(a)}_{1}G(x,t_{0})}{a!}\right)^{j_{a}}\\
			&=\sum \frac{n!}{j_{1}!j_{2}!\dots j_{n-k+1}!}\prod_{a=1}^{n-k+1}\left( \frac{ \sum_{\ell=0}^{\lfloor \frac{a-1}{2} \rfloor}\binom{a}{2\ell+1}\partial^{(a-(2\ell+1))}_{1}p(x)(-1)^{\ell}(2\alpha)^{2\ell}\partial_{1}s(x_{1},t_{0}) }{a!}\right)^{j_{a}}\\
			&=\partial_{1}s(x_{1},t_{0})^{k} \left( \sum \frac{n!}{j_{1}!j_{2}!\dots j_{n-k+1}!}\prod_{a=1}^{n-k+1}\left( \frac{ \sum_{\ell=0}^{\lfloor \frac{a-1}{2} \rfloor}\binom{a}{2\ell+1}\partial^{(a-(2\ell+1))}_{1}p(x)(-1)^{\ell}(2\alpha)^{2\ell} }{a!}\right)^{j_{a}} \right).
		\end{split}
	\end{equation*}
Recalling \eqref{C_nk}, one gets \eqref{lemma_B_DG_0}.
\end{proof}

\subsection{An extended Chain Rule} We finish this section with an important result computing the action of the Chain rule in a particular evaluation of the ZK quasimonochromatic algebra.

\begin{lemma}\label{ADf}
	Let $h,g\geq 1$, $\ell\geq0$ be fixed integers, and let $t_{0}$ be such that $s(x_{1},t_{0})=0$. Then 
	\begin{equation}\label{ADf_0}
	\partial_{1}^{(h)} \left( F^{(\ell)}(G(x,t)) \right) \Big|_{(x,t_{0})} =\sum_{k=1}^{h}F^{(\ell+k)}(0)C_{h,k}(x)\partial_{1}s(x_{1},t_{0})^{k},
	\end{equation}
	where $C_{h,k}(x)$ are coefficients only depending on $x$. Additionally, 
	\begin{equation}\label{ADf_1}
	\partial_{j}^{(g)}( F^{(\ell)}(G(x,t))) \Big|_{(x,t_{0})} = \left\{
	\begin{array}{c l}
		F^{(\ell)}(0) & \mbox{ if  } g=0 \\
		0 & \mbox{ if } g>0,
	\end{array}
	\right.
	\end{equation}
and {\color{black}
\begin{equation}\label{ADf_2}
\begin{aligned}
	&~{}  \partial_{1}\partial_{j}^{(g)} \left(  F^{(\ell)}(G(x,t)) \right) \Big|_{(x,t_{0})} ~{} = \left\{
	\begin{array}{c l}
		F^{(\ell+1)}(0)p(x)\partial_{1}s(x,t_{0}) & \mbox{ if  } g=0 \\
		F^{(\ell+1)}(0)\partial_{1}s(x,t_{0})B_{g,1}({\bf \partial_{j}^{(v)}p(x)}: {\bf v}) & \mbox{ if }g>0.
	\end{array}
	\right.
	\end{aligned}
	\end{equation}
}
\end{lemma}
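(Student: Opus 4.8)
The plan is to read off all three identities directly from the Fa\`a di Bruno formula of Lemma \ref{Bell_lema1}, exploiting two structural facts repeatedly: at any time $t_0$ with $s(x_1,t_0)=0$ one has $G(x,t_0)=p(x)s(x_1,t_0)=0$, so every $F$-derivative factor collapses to its value at the origin; and each $B_{n,k}$ is homogeneous of degree $k$ in its entries. For \eqref{ADf_0} I would apply Lemma \ref{Bell_lema1} with $j=1$ to get $\partial_1^{(h)}(F^{(\ell)}(G(x,t)))=\sum_{k=1}^{h}F^{(\ell+k)}(G(x,t))\,B_{h,k}({\bf \partial_1^{(v)}G(x,t):v})$, then evaluate at $(x,t_0)$: the $F$-factors become $F^{(\ell+k)}(0)$, and Lemma \ref{lemma_B_DG} (with $n=h$) rewrites each Bell term as $C_{h,k}(x)\,\partial_1 s(x_1,t_0)^{k}$ with $C_{h,k}$ as in \eqref{C_nk}, independent of $t_0$. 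This is exactly \eqref{ADf_0}.

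For \eqref{ADf_1} and \eqref{ADf_2} the index $j$ ranges over $\{2,\dots,N\}$, so $\partial_j$ acts only on $p$ and $\partial_j^{(a)}G(x,t)=s(x_1,t)\,\partial_j^{(a)}p(x)$ for $a\ge 1$. If $g=0$, identity \eqref{ADf_1} is immediate. If $g\ge 1$, Lemma \ref{Bell_lema1} together with the homogeneity of $B_{g,k}$ gives $\partial_j^{(g)}(F^{(\ell)}(G(x,t)))=\sum_{k=1}^{g}F^{(\ell+k)}(G(x,t))\,s(x_1,t)^{k}\,B_{g,k}({\bf \partial_j^{(v)}p(x):v})$, and every summand carries a factor $s(x_1,t_0)^{k}$ with $k\ge 1$, so the whole expression vanishes at $t_0$; this proves \eqref{ADf_1}.

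For \eqref{ADf_2} I would start from the identity just obtained for $\partial_j^{(g)}(F^{(\ell)}(G))$ (and from $\partial_1(F^{(\ell)}(G))=F^{(\ell+1)}(G)(\partial_1 p(x)\,s(x_1,t)+p(x)\,\partial_1 s(x_1,t))$ when $g=0$), apply $\partial_1$ by the product rule, and evaluate at $t_0$. Differentiating the factor $F^{(\ell+k)}(G)$ yields $F^{(\ell+k+1)}(G)\,\partial_1 G$, which at $t_0$ contributes $F^{(\ell+k+1)}(0)\,p(x)\,\partial_1 s(x_1,t_0)$ but multiplies the vanishing factor $s(x_1,t_0)^{k}$; differentiating $B_{g,k}({\bf \partial_j^{(v)}p(x):v})$ keeps that vanishing factor $s(x_1,t_0)^{k}$; and differentiating $s(x_1,t)^{k}$ gives $k\,s(x_1,t)^{k-1}\partial_1 s(x_1,t)$, which at $t_0$ survives only when $k=1$. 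Hence only the $k=1$ term remains, equal to $F^{(\ell+1)}(0)\,\partial_1 s(x_1,t_0)\,B_{g,1}({\bf \partial_j^{(v)}p(x):v})=F^{(\ell+1)}(0)\,\partial_1 s(x_1,t_0)\,\partial_j^{(g)}p(x)$, which is the claimed formula for $g\ge 1$; the case $g=0$ is read off directly from the expansion above.

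The argument is essentially bookkeeping, and I do not expect a genuine obstacle: the only delicate point is to keep careful track, after each application of $\partial_1$ or $\partial_j$ and each evaluation at $t_0$, of the exact powers of $s(x_1,t_0)$ and $\partial_1 s(x_1,t_0)$ that appear, so as to correctly isolate the unique surviving term in \eqref{ADf_2} and the complete cancellation in \eqref{ADf_1}.
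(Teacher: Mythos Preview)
Your proposal is correct and follows essentially the same approach as the paper: applying Lemma~\ref{Bell_lema1} and then Lemma~\ref{lemma_B_DG} for \eqref{ADf_0}, and using the homogeneity of the Bell polynomials plus the vanishing $s(x_1,t_0)=0$ to isolate the surviving $k=1$ term in \eqref{ADf_1}--\eqref{ADf_2}. The paper writes the product rule for \eqref{ADf_2} via the trinomial sum $\sum_{b_1+b_2+b_3=1}$, but your term-by-term description is the same analysis.
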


\begin{proof}
From Lemma \ref{Bell_lema1},
	\[
	\partial_{1}^{(h)}( F^{(\ell)}(G(x,t))) =\sum_{k=1}^{h}F^{(\ell+k)}(G(x,t))B_{h,k} \left( {\bf \partial_{1}^{(v)}G(x,t): v} \right).
	\]
Thanks to Lemma \ref{lemma_B_DG}, one has
	\begin{equation*}
		B_{h,k} \left(  {\bf\partial_{1}^{(v)}G(x,t): v} \right) \Big|_{(x,t_{0})}=C_{h,k}(x)\partial_{1}s(x_{1},t_{0})^{k}.
	\end{equation*}
	Gathering the two previous identities, and using the fact that from \eqref{G_s} one has $G(x,t)\Big|_{(x,t_0)}=0$, we get \eqref{ADf_0}.
	
	\medskip
	
	Note that for $g>0$, we have
	\begin{equation}\label{split1}
		\begin{split}
			\partial_{j}^{(g)}( F^{(\ell)}(G(x,t)))& =\sum_{k=1}^{g}F^{(\ell+k)}(G(x,t))s(x_{1},t)^{k}B_{g,k} \left( {\bf \partial_{j}^{(v)}p(x): (v)} \right)\\
			\partial_{1}\partial_{j}^{(g)}( F^{(\ell)}(G(x,t)))& =\sum_{k=1}^{g}\sum_{b_{1}+b_{2}+b_{3}=1}\partial_{1}^{(b_{1})}\left( F^{(\ell+k)}(G(x,t))\right)  \partial_{1}^{(b_{2})}\left( s(x_{1},t)^{k}\right)\times \\
			&\hspace{3cm}  \partial_{1}^{(b_{3})}\left( B_{g,k} \left( {\bf \partial_{j}^{(v)}p(x): v} \right)\right).
		\end{split}
	\end{equation}
	Hence, from the first equation in \eqref{split1},
	\[
	\partial_{j}^{(g)} \left( F^{(\ell)}(G(x,t)) \right) \Big|_{(x,t_{0})} = \left\{
	\begin{array}{c l}
		F^{(\ell)}(0) & \mbox{ if  } g=0 \\
		0 & \mbox{ if } g>0,
	\end{array}
	\right.
	\]
	proving \eqref{ADf_1}. {\color{black} Finally, if $\{b_{1},b_{2},b_{3}\}$ are nonnegative integers with $b_{1}+b_{2}+b_{3}=1$, we have that
	\[
	\begin{aligned}
	&~{}  \partial_{1}^{(b_{1})}\left( F^{(\ell+k)}(G(x,t))\right)  \partial_{1}^{(b_{2})}\left( s(x_{1},t)^{k}\right) \partial_{1}^{(b_{3})}\left( B_{g,k} \left( {\bf \partial_{j}^{(v)}p(x): v} \right)\right)\Big|_{(x,t_{0})} \\
	&~{} = \left\{
	\begin{array}{c l}
		0 & \mbox{ if } b_{2}=0 \\
		0 & \mbox{ if } b_{2}=1  \mbox{ and } k>1 \\
		F^{(\ell+1)}(0)\partial_{1}s(x,t_{0})B_{g,1}({\bf \partial_{j}^{(v)}p(x)}: {\bf v}) & \mbox{ if } b_{2}=1  \mbox{ and } k=1.
	\end{array}
	\right.
	\end{aligned}
	\]
Hence \eqref{ADf_2}	follows from \eqref{split1}.}
\end{proof}

\section{Propagation of the quasimonochromatic structure}\label{sec:propa1}

The purpose of this section is to propagate the results obtained in Lemma \ref{eq_simplificada_0} in the case of some particular space-time parameters. As in the previous section, the symbol
\[
M(x,t)\Big|_{(x,t_{0})}
\]
indicates evaluation of the function $M$ at the particular point $(x,t_{0})\in\mathbb R^{N}\times \mathbb R$. We first start propagating Lemma \ref{eq_simplificada_0}:

\begin{lemma}[Flat second derivative of $F$]\label{Corf1}
Let $v$ of the form \eqref{def_v} be a smooth solution of \eqref{A2}. Then it holds true that:
\begin{equation}\label{sec1}
	\begin{split}
		&4\alpha\left(F^{(1)}(0)\left( mp(x)-p(x)(2\alpha)^{2}+ \Delta p(x)+ 2\partial_{1}^{(2)}p(x)\right) \right) \\
		&\quad +16\alpha^{3}\left( F^{(3)}(0)p(x)^{3} \right)+ 2\left(1-(-1)^{q} \right)(2\alpha)^{q} \left(F^{(1)}(0) \right)^{q}p(x)^{q} =0,
	\end{split}
\end{equation}
and
\begin{equation}\label{sec2}
	8\alpha^{2}\left(6F^{(2)}(0)p(x)\partial_{1}p(x) \right)+ 2\big(1+(-1)^{q}\big)(2\alpha)^{q}\left(F^{(1)}(0) \right)^{q} p(x)^{q} =0.
\end{equation}
Moreover:
\begin{itemize}
	\item[a)] If $q$ is odd,
	\begin{equation}\label{Corf1a}
		\begin{cases}
		F^{(2)}(0)=0. \\
		\mbox{If } \ F^{(1)}(0)=0 \mbox{ then }F^{(3)}(0)=0.
		\end{cases}
	\end{equation}
	\item[b)] If $q$ is even, then it holds true that:
	\begin{equation}\label{Corf1b}
		\begin{cases}
		F^{(1)}(0)=0\mbox{ if and only if }  F^{(2)}(0)=0.\\
		\mbox{If } \ F^{(1)}(0)=0 \mbox{ then } F^{(3)}(0)=0.
		\end{cases}
	\end{equation}
\end{itemize}  
\end{lemma}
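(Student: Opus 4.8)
The plan is to extract equations \eqref{sec1} and \eqref{sec2} by substituting the expansion of Lemma \ref{eq_simplificada_0} into \eqref{A2}, then evaluating at a time $t_0$ where $s(x_1,t_0)=0$ and reading off the coefficient of an appropriate power of $\partial_1 s(x_1,t_0)$. Concretely, fix $x$ and choose $t_0$ with $s(x_1,t_0)=0$; at such a point $G(x,t_0)=0$, $\partial_1 s(x_1,t_0)=\pm 2\alpha\neq 0$, and $\partial_t s(x_1,t_0)=2\alpha m \cdot(\pm1)$ (up to sign, $\partial_t s = m\,\partial_1 s$). The left-hand side of \eqref{A2} vanishes identically in $(t,x)$, so in particular it vanishes at $(x,t_0)$. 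I would evaluate \eqref{eq_simplificada} term-by-term at $(x,t_0)$ using the machinery already built: the Faà di Bruno / Bell-polynomial evaluations from Lemma \ref{ADf} (equations \eqref{ADf_0}, \eqref{ADf_1}, \eqref{ADf_2}), Lemma \ref{lemma_B_DG}, and the product law Lemma \ref{product_law0}. Each surviving piece becomes $F^{(k)}(0)$ times an explicit polynomial in $p(x)$ and its derivatives, times a fixed power of $\partial_1 s(x_1,t_0)$. Since $\partial_1 s(x_1,t_0)$ is a nonzero constant, I can divide it out and obtain one polynomial identity in $x$ holding for all $x$; separating the contributions that carry a factor $\partial_t s$ (equivalently, a factor $m$) from those that do not — or more carefully, separating by the parity structure forced by Lemmas \ref{PEB} and \ref{ADhBts} — splits this into the two identities \eqref{sec1} and \eqref{sec2}. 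The term $2(\partial_1 v)^q$ contributes $2(F^{(1)}(0))^q (\partial_1 G)^q = 2(F^{(1)}(0))^q p(x)^q (\partial_1 s)^q$, whose parity relative to the cubic-derivative terms depends on the parity of $q$; this is exactly why the factors $1-(-1)^q$ and $1+(-1)^q$ appear, sorting the pure-power term into \eqref{sec1} when $q$ is odd and into \eqref{sec2} when $q$ is even.

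Once \eqref{sec1} and \eqref{sec2} are established, part a) and part b) are short algebraic deductions. For $q$ odd: the factor $1+(-1)^q=0$, so \eqref{sec2} reduces to $48\alpha^2 F^{(2)}(0)\,p(x)\partial_1 p(x)=0$ for all $x$; writing $p\,\partial_1 p=\tfrac12\partial_1(p^2)$ and recalling that $p$ is nontrivial (hence $p^2$ is not constant, since $p\to0$ at infinity forces $p\equiv0$ if $p^2$ were constant), we get $F^{(2)}(0)=0$. Then, assuming additionally $F^{(1)}(0)=0$, equation \eqref{sec1} collapses (the $F^{(1)}(0)$ terms and the pure-power term both vanish) to $16\alpha^3 F^{(3)}(0)\,p(x)^3=0$ for all $x$; nontriviality of $p$ gives $F^{(3)}(0)=0$. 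For $q$ even: now $1-(-1)^q=0$, so \eqref{sec1} loses its pure-power term, but \eqref{sec2} keeps it, becoming $48\alpha^2 F^{(2)}(0)\,p\,\partial_1 p + 4(2\alpha)^q (F^{(1)}(0))^q p^q=0$. If $F^{(1)}(0)=0$ this forces $F^{(2)}(0)\,p\,\partial_1 p\equiv 0$, hence $F^{(2)}(0)=0$ as above; conversely if $F^{(2)}(0)=0$ then $(F^{(1)}(0))^q p^q\equiv 0$, and nontriviality of $p$ gives $F^{(1)}(0)=0$ (here one uses $q\geq 1$). Finally, under $F^{(1)}(0)=0$ (equivalently $F^{(2)}(0)=0$ in the even case), \eqref{sec1} again reduces to $16\alpha^3 F^{(3)}(0)\,p^3=0$, so $F^{(3)}(0)=0$.

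The main obstacle is the bookkeeping in the first step: correctly tracking which terms of the expansion \eqref{eq_simplificada} survive at $(x,t_0)$, what power of $\partial_1 s(x_1,t_0)$ each carries, and how the parity restrictions of Lemmas \ref{PEB}, \ref{ADhBts}, and \ref{product_law0} interact to separate the two resulting identities cleanly. In particular one must be careful with the mixed sum $\sum_{j=2}^N\sum_{k=1}^2$: by \eqref{ADf_1} and \eqref{ADf_2} only the $k=1$, $b_2=1$ configuration contributes, producing the $\Delta_c p$-type term $F^{(1)}(0)\sum_{j=2}^N\partial_j^{(2)}p(x)\cdot\partial_1 s$, which combines with the $\partial_1\Delta v$ along $x_1$ (which after evaluation yields the $2\partial_1^{(2)}p$ and $-(2\alpha)^2 p$ pieces via $C_{3,k}(x)$ from \eqref{C_nk}) and the $\partial_t v$ piece (yielding $m\,p(x)$) to form \eqref{sec1}. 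The purely $x$-analytic facts I will invoke repeatedly are elementary: a $C^2$ function $p$ with $p,\nabla p,D^2p\to0$ at infinity that satisfies $p\,\partial_1 p\equiv0$ or $p^k\equiv 0$ for some $k\geq1$ must be identically zero, contradicting nontriviality. Everything else is the algebra already prepared in Sections \ref{Sec:2}--\ref{sec:algebra}.
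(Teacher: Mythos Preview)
Your proposal is correct and follows essentially the same route as the paper: evaluate the expansion \eqref{eq_simplificada} at times $t_0$ with $s(x_1,t_0)=0$, identify each surviving piece as $F^{(k)}(0)$ times a power of $\partial_1 s(x_1,t_0)$ times an explicit function of $p$ and its derivatives, and then separate into \eqref{sec1} and \eqref{sec2}; the deductions of \eqref{Corf1a} and \eqref{Corf1b} are identical to the paper's. The one place where the paper is more explicit is the separation mechanism: rather than invoking ``parity structure'' abstractly, it simply picks two such times $t_1,t_2$ with $\partial_1 s(x_1,t_1)=2\alpha$ and $\partial_1 s(x_1,t_2)=-2\alpha$, writes the evaluated identity at each, and then subtracts to obtain \eqref{sec1} (odd powers of $\partial_1 s$ survive) and adds to obtain \eqref{sec2} (even powers survive); your ``divide it out'' phrasing is slightly misleading since different terms carry different powers of $\partial_1 s$, but your subsequent parity remark is exactly this add/subtract trick.
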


\begin{remark}
Equations \eqref{Corf1a} and \eqref{Corf1b} show that parity in the nonlinearity is a complicated issue. We cannot infer that  $F^{(2)}(0)=0$ in every case. Indeed, in the case $q$ even, the sought condition  $F^{(2)}(0)=0$ leads to the strong condition $F^{(1)}(0)=0$, that formally will lead to a trivial solution $F\equiv 0$. This is still not clear from our current computations, but during next sections will stay clear.
\end{remark}

\begin{proof}[Proof of Lemma \ref{Corf1}]
Consider \eqref{eq_simplificada}. Let $t_{0}\in\mathbb{R}$ be any element such that $s(x_{1},t_{0})=0$. Then from \eqref{G_s} one has $G(x,t_0)=0$ and
\begin{equation*}
	\begin{split}
		F^{(1)}(G(x,t))p(x)\partial^{(1)}_{t}s(x_{1},t) \Big|_{(x,t_{0})} &= F^{(1)}(0)p(x)\partial^{(1)}_{t}s(x_{1},t_{0}).
	\end{split}
\end{equation*}
Now we concentrate ourselves in the following simplified formula for the derivative of the composition. Using Lemma \ref{Bell_lema1} and \eqref{bells} ($B_{2,1}=x_2$), and the fact that $s(x_{1},t_{0})=0$,
\begin{equation*}
	\begin{split}
		\partial_{1}\partial^{2}_{j} (F\circ G)(x,t) \Big|_{(x,t_0)} &=\sum_{k=1}^{2}	F^{(k)}(0)B_{2,k}\left({\bf \partial^{(v)}_{j}p(x):v} \right) ks(x_{1},t_{0})^{k-1}\partial_{1}s(x_{1},t_{0})\\
		&=F^{(1)}(0)\partial^{2}_{j}p(x)\partial_{1}s(x_{1},t_{0}).
	\end{split}
\end{equation*}
Therefore, for $\Delta_{c}:= \sum_{j=2}^N \partial_j^2$,
\begin{equation*}
	\begin{split}
		(\partial_{1}\Delta_{c}v)(x,t_{0})=\partial_{1}(\Delta_{c}(F\circ G))(x,t_{0})=F^{(1)}(0)(\Delta_{c}p(x))\partial_{1}s(x_{1},t_{0}).
	\end{split}
\end{equation*}	
Also, using again Lemma \ref{Bell_lema1} with $\ell=0$, $j=1$ and $n=3$, $s(x_{1},t_{0})=0$ and \eqref{bells}, 
\begin{equation*}
	\begin{split}
		\partial^{3}_{1}v(x,t_{0})&=F^{(1)}(0)B_{3,1} \left( {\bf \partial_{j}^{(v)}G(x,t_0): v} \right) \\
		&\quad  +F^{(2)}(0) B_{3,2} \left( {\bf \partial_{j}^{(v)}G(x,t_0): v} \right) +F^{(3)}(0)B_{3,3} \left( {\bf \partial_{j}^{(v)}G(x,t_0): v} \right) \\
		& =F^{(1)}(0)\left( 3\partial_{1}^{(2)}p(x)+ p(x)(-1)(2\alpha)^{2}\right)\partial_{1}s(x_{1},t_{0}) \\
		&\quad + 6 F^{(2)}(0) p(x) \partial_{1}p(x) \partial_{1}s(x_{1},t_{0})^{} +F^{(3)}(0) p(x)^{3}\partial_{1}s(x_{1},t_{0})^{3}.
	\end{split}
\end{equation*}
Finally, we consider the nonlinear term:
\begin{equation*}
	\begin{split}
		2\left( F^{(1)}(G(x,t))\partial^{(1)}_{1}G(x,t)\right)^{q} \Big|_{(x,t_{0})}  & = 2\left( F^{(1)}(0)\partial_{1}G(x,t_{0})\right)^{q} \\
		&=2F^{(1)}(0)^{q}p(x)^{q}\partial_{1}s(x_{1},t_{0})^{q}. 
	\end{split}
\end{equation*}
Gathering the previous identities in \eqref{eq_simplificada}, we obtain the following list of terms:
\begin{equation*}
	\begin{split}
		&\left( \partial_t v+\partial_{x_1}(\Delta v) +2(\partial_{x_1}v)^q \right)\Big|_{(x,t_{0})} \\
		&\quad = F^{(1)}(0)p(x)\partial^{(1)}_{t}s(x_{1},t_{0})  +F^{(1)}(0)\left( 3\partial_{1}^{(2)}p(x)+ p(x)(-1)(2\alpha)^{2}\right)\partial_{1}s(x_{1},t_{0}) \\
		&\qquad +  6 F^{(2)}(0) p(x) \partial_{1}p(x)  \partial_{1}s(x_{1},t_{0})^{}  +F^{(3)}(0)( p(x)^{3}) \left( \partial_{1}s(x_{1},t_{0}) \right)^{3}\\
		&\qquad +F^{(1)}(0)(\Delta_{c}p(x))\partial_{1}s(x_{1},t_{0}) +2F^{(1)}(0)^{q}p(x)^{q} \left( \partial_{1}s(x_{1},t_{0}) \right)^{q}.
\end{split}
\end{equation*}
Simplifying,
\begin{equation*}
	\begin{split}
	&\big(\partial_t v+\partial_{x_1}(\Delta v) +2(\partial_{x_1}v)^q \big)\Big|_{(x,t_{0})} \\
		&\quad  =\partial_{1}s(x_{1},t_{0})\left(F^{(1)}(0)\left( mp(x)-p(x)(2\alpha)^{2}+ \Delta p(x)+ 2\partial_{1}^{(2)}p(x)\right) \right) \\
		&\qquad +\partial_{1}s(x_{1},t_{0})^{2}\left(6F^{(2)}(0)p(x)\partial_{1}p(x) \right) \\
		&\qquad  +\partial_{1}s(x_{1},t_{0})  ^{3}\left(F^{(3)}(0)p(x)^{3} \right)+\partial_{1}s(x_{1},t_{0})  ^{q}\left(2F^{(1)}(0)^{q}p(x)^{q} \right).
	\end{split}
\end{equation*}
In particular,  if  we choose $t_0= t_{1}$ and $t_0=t_{2}$  such that both $s(x_{1},t_{0})=0$, $\partial_{1} s(x_{1},t_{1})=2\alpha $ and $\partial_{1} s(x_{1},t_{2})=-2\alpha $ (this is indeed possible) we obtain:  
\begin{equation*}
	\begin{split}
		&\left(\partial_t v+\partial_{x_1}(\Delta v) +2(\partial_{x_1}v)^q \right) \Big|_{(x,t_{1})} \\
		&=2\alpha\left(F^{(1)}(0)\left( mp(x)-p(x)(2\alpha)^{2}+ \Delta p(x)+ 2\partial_{1}^{(2)}p(x)\right) \right) \\
		&\quad +4\alpha^{2}\left(6F^{(2)}(0)p(x)\partial_{1}p(x) \right) \\
		&\quad  +8\alpha^{3}\left(F^{(3)}(0)p(x)^{3} \right)+(2\alpha)^{q}2F^{(1)}(0)^{q}p(x)^{q} ,
	\end{split}
\end{equation*}
and 
\begin{equation*}
	\begin{split}
		&\left(\partial_t v+\partial_{x_1}(\Delta v) +2(\partial_{x_1}v)^q\right) \Big|_{(x,t_{2})} \\
		&\quad =-2\alpha\left(F^{(1)}(0)\left( mp(x)-p(x)(2\alpha)^{2}+ \Delta p(x)+ 2\partial_{1}^{(2)}p(x)\right) \right) \\
		&\qquad +4\alpha^{2}\left(6F^{(2)}(0)p(x)\partial_{1}p(x) \right) \\
		&\qquad -8\alpha^{3}\left(F^{(3)}(0)p(x)^{3} \right)+(-1)^{q}(2\alpha)^{q}2F^{(1)}(0)^{q}p(x)^{q}.
	\end{split}
\end{equation*}
Since \eqref{A2} is satisfied, subtracting the two previous identities we obtain \eqref{sec1}. If now one considers the addition of both identities, one gets \eqref{sec2}.

\medskip

{\it Proof of \eqref{Corf1a}.} Now, let us assume that $q$ is odd. From  \eqref{sec1} we obtain
\begin{equation}\label{corta}
	\begin{split}
		0&=4\alpha F^{(1)}(0)\left( mp(x)-p(x)(2\alpha)^{2}+ \Delta p(x)+ 2\partial_{1}^{(2)}p(x)\right)  \\
		&\quad +16\alpha^{3}\left(F^{(3)}(0)p(x)^{3} \right)+ 4(2\alpha)^{q}F^{(1)}(0)^{q}p(x)^{q},
	\end{split}
\end{equation}
and from  \eqref{sec2} we obtain
\[
\alpha^{2} F^{(2)}(0)p(x)\partial_{1}p(x) =0.
\] 
Since $\alpha>0$ and $p$ is nontrivial, this shows $F^{(2)}(0)=0$. Also, from \eqref{corta}, if $F^{(1)}(0)=0$ then $F^{(3)}(0)=0$. 

\medskip

{\it Proof of \eqref{Corf1b}.} On the other hand, if we assume that $q$ is even, then from  \eqref{sec1} we have that
\begin{equation*}
	\begin{split}
		0&=4\alpha\left(F^{(1)}(0)\left( mp(x)-p(x)(2\alpha)^{2}+ \Delta p(x)+ 2\partial_{1}^{(2)}p(x)\right) \right) \\
		&\quad +16\alpha^{3}\left( F^{(3)}(0)p(x)^{3} \right),
	\end{split}
\end{equation*}
and from \eqref{sec2} we have that
\[
8\alpha^{2}\left(6F^{(2)}(0)p(x)\partial_{1}p(x) \right)+ (2)(2\alpha)^{q}2F^{(1)}(0)^{q}p(x)^{q} =0.
\] 
The first equation shows that if $F^{(1)}(0)=0$, then $F^{(3)}(0)=0$; and the second equation shows that  $F^{(1)}(0)=0$ if and only if $F^{(2)}(0)=0$.
This ends the proof.
\end{proof}

Now we propagate time derivatives:
\begin{proposition}[Propagation of arbitrary time derivatives]\label{irradiacion} 
Let $v$ of the form \eqref{def_v}.	If $t_{0}\in\mathbb{R}$ is any element such that $s(x_{1},t_{0})=0$, then for every $n=2,3,\ldots$ it holds true that
	\begin{equation}\label{dtne}
		\begin{aligned}
		&	\left( \partial_{t}^{2n-3}	\left(\partial_t v +\partial_{x_1}(\Delta v) +2(\partial_{x_1}v)^q\right) \right)\Big|_{(x,t_{0})} \\
		& \qquad	=\sum_{\ell=1}^{2n}F^{(\ell)}(0) \left( \partial_{1}s(x_{1},t_{0}) \right)^{\ell}{\bf \zeta_{1,\ell} }(x)\\
		& \qquad \quad + \sum_{\ell=0}^{2n-3}  {\bf \zeta_{2,\ell} }(x) \sum_{k_{1}+k_{2}+\dots+k_{q}=\ell}  \sum_{\substack{ 0 \leq c_{1} \leq k_{1} \\ 0 \leq c_{2} \leq k_{2}  \\ {\small\vdots} \\ 0 \leq c_{n} \leq k_{n} }}  \prod_{i=1}^{q} F^{(c_{i}+1)}(0) \left( \partial_{1}s(x_1,t_{0}) \right)^{c_{i}+1} ,
		\end{aligned}
	\end{equation}	
	where 	${\bf \zeta_{i,\ell} }(x), i=1,2$ are functions that do not depend on $t_{0}$. Moreover,  
	\begin{equation}\label{nonzero}
	\hbox{ ${\bf \zeta_{1,2n-1} }(x)$ and ${\bf \zeta_{1,2n} }(x)$ are nonzero functions.}
	\end{equation}
\end{proposition}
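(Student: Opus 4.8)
The plan is to compute the left‑hand side of \eqref{dtne} directly. Splitting $\Delta=\partial_1^{(2)}+\Delta_c$ with $\Delta_c:=\sum_{j=2}^N\partial_j^{(2)}$, the quantity in \eqref{dtne} is $\partial_t^{2n-3}$ applied to $\partial_t v+\partial_1^{(3)}v+\partial_1\Delta_c v+2(\partial_1 v)^q$; since $v=F(G)$ is smooth all derivatives commute, so it suffices to expand each of these four pieces at a point $(x,t_0)$ with $s(x_1,t_0)=0$. The only elementary input is that $s=\sin(2\alpha(x_1+mt))$ is a function of $x_1+mt$ alone: hence $\partial_t s=m\,\partial_1 s$; $s$ does not depend on $x_j$ for $j\geq 2$; $G(x,t_0)=p(x)s(x_1,t_0)=0$, so every $F^{(r)}(G)$ collapses to $F^{(r)}(0)$; and, because $s(x_1,t_0)=0$ forces $\cos(2\alpha(x_1+mt_0))=\pm1$, for all integers $a,b\geq 0$ the derivative $\partial_1^{(a)}\partial_t^{(b)}s(x_1,t_0)$ vanishes when $a+b$ is even and equals an explicit nonzero constant times $\partial_1 s(x_1,t_0)$ when $a+b$ is odd.

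For the three linear pieces I would invoke the multivariate Fa\`a di Bruno formula (equivalently, iterate Lemma \ref{Bell_lema1}), writing each of $\partial_t^{2n-3}\partial_t v$, $\partial_t^{2n-3}\partial_1^{(3)}v$, $\partial_t^{2n-3}\partial_1\Delta_c v$ as a finite sum of terms $F^{(r)}(G)\prod_{B}\partial^{\mu_B}G$ over set‑partitions of the ($\leq 2n$) derivative slots. Evaluating at $t_0$ and expanding $\partial^{\mu_B}G=\partial^{\mu_B}(p\,s)$ by Leibniz: any block involving only the transverse derivatives $\partial_j$, $j\geq 2$, contributes $(\partial^{\mu_B}p)\,s(x_1,t_0)=0$, so every surviving block must contain a $\partial_t$ or a $\partial_1$, and by the $s$‑identity above each surviving block equals a polynomial in the derivatives of $p$ times exactly one factor $\partial_1 s(x_1,t_0)$. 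Hence each piece equals $\sum_{r\geq 1}F^{(r)}(0)\,(\partial_1 s(x_1,t_0))^{r}\,(\text{function of }x)$. Since $\partial_t^{2n-3}\partial_t v=\partial_t^{2n-2}v$ involves $2n-2$ derivatives (all $\partial_t$), $\partial_t^{2n-3}\partial_1\Delta_c v$ involves $2n$ derivatives of which only $2n-2$ are of the active types $\partial_t,\partial_1$, and $\partial_t^{2n-3}\partial_1^{(3)}v$ involves $2n$ active derivatives, the sum of the three pieces is the first sum of \eqref{dtne}, $\sum_{\ell=1}^{2n}F^{(\ell)}(0)(\partial_1 s)^{\ell}\,{\bf \zeta_{1,\ell}}(x)$, with ${\bf \zeta_{1,\ell}}$ built only from $p$, hence independent of $t_0$.

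For the nonlinear piece $2(\partial_1 v)^q=2(F^{(1)}(G)\,\partial_1 G)^q$ I would expand $\partial_t^{2n-3}$ by the multinomial Leibniz rule over the $q$ factors and evaluate each factor $\partial_t^{(k_i)}(F^{(1)}(G)\,\partial_1 G)|_{(x,t_0)}$ by the time‑derivative analogue of Lemma \ref{ADf} (combine Lemma \ref{Bell_lema1}, Lemma \ref{PEB} and $\partial_t s=m\,\partial_1 s$): since $\partial_t^{(j)}F^{(1)}(G)|_{t_0}=\sum_k F^{(k+1)}(0)\,p(x)^k A^{0}_{j,k}(\partial_t s)^k$ and $\partial_1 G|_{t_0}=p\,\partial_1 s$ supplies one extra power, each factor equals $\sum_{c_i=0}^{k_i}F^{(c_i+1)}(0)(\partial_1 s)^{c_i+1}(\text{function of }x)$. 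Multiplying the $q$ factors and collecting the $x$‑coefficients gives the second sum of \eqref{dtne}; being multilinear of degree $q$ in the jets $\{F^{(r)}(0)\}$ while the first sum is linear, this part leaves the coefficients ${\bf \zeta_{1,\ell}}$, in particular the top ones, to be computed solely from the three linear pieces. Assembling these computations proves \eqref{dtne}.

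It remains to prove \eqref{nonzero}, which I regard as the crux. By the second paragraph the orders $\ell=2n-1,2n$ are reached only by $\partial_t^{2n-3}\partial_1^{(3)}v$, whose $2n$ derivative slots are $2n-3$ copies of $\partial_t$ and $3$ copies of $\partial_1$. The only partition producing $F^{(2n)}$ is the all‑singletons one, giving $F^{(2n)}(0)(\partial_t G)^{2n-3}(\partial_1 G)^3|_{t_0}=F^{(2n)}(0)\,m^{2n-3}\,p(x)^{2n}(\partial_1 s)^{2n}$, so ${\bf \zeta_{1,2n}}(x)=m^{2n-3}p(x)^{2n}$. Order $2n-1$ comes from partitions with one size‑$2$ block and $2n-2$ singletons: the block $\{\partial_t,\partial_t\}$ gives $\partial_t^{(2)}G|_{t_0}=p\,\partial_t^{(2)}s(x_1,t_0)=0$, while $\{\partial_1,\partial_1\}$ and $\{\partial_t,\partial_1\}$ give $2\,\partial_1 p\,\partial_1 s$ and $\partial_1 p\,m\,\partial_1 s$; counting the $\binom{3}{2}=3$ and $3(2n-3)$ ways these occur and multiplying by the nonzero singleton products yields ${\bf \zeta_{1,2n-1}}(x)=3(2n-1)\,m^{2n-3}\,\partial_1 p(x)\,p(x)^{2n-2}$. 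Finally $m\neq 0$ (we may assume this: for $m=0$ the ansatz is stationary and the conclusion follows directly from the stationary analysis), $p\not\equiv 0$ by hypothesis, and $p$ cannot be independent of $x_1$ since it decays as $x_1\to\pm\infty$, so $\partial_1 p\not\equiv 0$; moreover near any point with $\partial_1 p\neq 0$ one still has $p\neq 0$ after an arbitrarily small shift in $x_1$, so both $p^{2n}$ and $\partial_1 p\,p^{2n-2}$ are not identically zero, which is \eqref{nonzero}. The main difficulty is exactly this last step: tracking the multiplicities of the low‑order partitions so that no cancellation occurs at orders $2n-1$ and $2n$, and verifying that neither the $\Delta_c$‑piece (killed by $\partial_j G|_{t_0}=0$) nor the degree‑$q$ nonlinear piece contributes at those orders.
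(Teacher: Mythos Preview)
Your proof is correct and follows essentially the same route as the paper: split the expression into the four pieces $\partial_t v$, $\partial_1^{(3)}v$, $\partial_1\Delta_c v$, $2(\partial_1 v)^q$, expand each by Fa\`a di Bruno/Leibniz, use that at $t_0$ every surviving block contributes exactly one factor of $\partial_1 s$, and read off the top coefficients $\zeta_{1,2n-1},\zeta_{1,2n}$ from the $\partial_t^{2n-3}\partial_1^{(3)}v$ piece alone. Your set-partition bookkeeping is equivalent to (and somewhat more transparent than) the paper's iterated univariate Bell-polynomial computation, and your explicit formulas $\zeta_{1,2n}=m^{2n-3}p^{2n}$ and $\zeta_{1,2n-1}=3(2n-1)m^{2n-3}p^{2n-2}\partial_1 p$ agree with the paper's values $\xi^{2}_{2n}$, $\xi^{2}_{2n-1}$ once one converts between the $(\partial_t s)^\ell$ and $(\partial_1 s)^\ell$ normalizations; your handling of the side case $m=0$ is in fact more explicit than the paper's, which tacitly assumes $m\neq 0$.
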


In order to use this result, we define the set
\[
A:= \{ n \geq 2 ~ :  ~ \eqref{dtne} \hbox{ is valid for $n$} \}.
\]
It is not difficult to realize, after a proof by an inductive argument, that $A=\{n \geq 2\}$. This fact ensures that the property \eqref{dtne} is indeed valid for all $n\geq 2$. Therefore, we avoid using $A$ in what follows. An important corollary of the previous result is the following classification for $F$:

\begin{corollary}[Rigidity of the profile $F$]\label{F se anula}
Under the assumption on the real analyticity of $F$ at $0$ described in Theorems \ref{MT1} and \ref{MT2},  one has the following consequences:
\begin{itemize}
	\item[$(i)$]If the nonlinearity exponent $q$ in \eqref{mKdVN} is odd,
	\begin{equation}\label{F se anula 1}
		\begin{cases}
		F^{(2n)}(0)=0 \ \mbox{ for all } n\in\mathbb{N},\\
		\mbox{if} \ F^{(1)}(0)=0 \mbox{ then }F^{(n)}(0)=0 \mbox{ for all } n\in\mathbb{N}.
		\end{cases}
	\end{equation}
	\item[$(ii)$] If now $q$ is even, then it holds true that
	\begin{equation}\label{F se anula 2}
		 \mbox{ if } \ F^{(1)}(0)=0 \mbox{ then }F^{(n)}(0)=0 \mbox{ for all } n\in\mathbb{N}.
	\end{equation}
\end{itemize}  
\end{corollary}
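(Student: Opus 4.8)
The plan is to deduce Corollary \ref{F se anula} from Proposition \ref{irradiacion} by an inductive bootstrap on the order of vanishing of the Taylor coefficients of $F$ at $0$, combining it with the low-order information already supplied by Lemma \ref{Corf1}. Since $v$ solves \eqref{A2}, the left-hand side of \eqref{dtne} vanishes identically in $x$ for every time $t_0$ with $s(x_1,t_0)=0$; as in the proof of Lemma \ref{Corf1}, one is free to choose such a $t_0$ with $\partial_1 s(x_1,t_0)=2\alpha$ or $\partial_1 s(x_1,t_0)=-2\alpha$, so \eqref{dtne} becomes a polynomial identity in the variable $\sigma:=\partial_1 s(x_1,t_0)\in\{\pm 2\alpha\}$ whose coefficients are functions of $x$ built out of the $F^{(\ell)}(0)$ and the $\boldsymbol\zeta_{i,\ell}(x)$. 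Adding and subtracting the identities obtained from $\sigma=2\alpha$ and $\sigma=-2\alpha$ separates even and odd powers of $\sigma$, which is precisely the mechanism that will let us isolate $F^{(2n-1)}(0)$ and $F^{(2n)}(0)$ one step at a time.

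First I would set up the induction. Suppose $q$ is odd. By \eqref{Corf1a} we already know $F^{(2)}(0)=0$, so the base case of ``all even derivatives up to order $2n$ vanish'' holds for $n=1$. Assume inductively that $F^{(2)}(0)=F^{(4)}(0)=\cdots=F^{(2n-2)}(0)=0$ for some $n\ge 2$. In \eqref{dtne} the second (nonlinear) sum is a sum of products $\prod_{i=1}^q F^{(c_i+1)}(0)\sigma^{c_i+1}$ with $c_1+\cdots+c_q=\ell\le 2n-3$; I want to argue that, modulo terms already known to vanish, this whole block contributes only lower-order and already-controlled data, so the top-order coefficients of the polynomial identity come from the first sum $\sum_{\ell=1}^{2n}F^{(\ell)}(0)\sigma^\ell\,\boldsymbol\zeta_{1,\ell}(x)$. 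Here the nondegeneracy statement \eqref{nonzero} — that $\boldsymbol\zeta_{1,2n-1}(x)$ and $\boldsymbol\zeta_{1,2n}(x)$ are not identically zero — is the crucial input: after extracting the coefficient of $\sigma^{2n}$ (via the even part) and of $\sigma^{2n-1}$ (via the odd part), one gets $F^{(2n)}(0)\,\boldsymbol\zeta_{1,2n}(x)=(\text{lower order, already known})$ and similarly for $2n-1$. The even-order equation, together with the inductive hypothesis killing all the lower even contributions, forces $F^{(2n)}(0)\,\boldsymbol\zeta_{1,2n}(x)\equiv 0$, hence $F^{(2n)}(0)=0$; this closes the induction and gives the first line of \eqref{F se anula 1}.

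For the second line of \eqref{F se anula 1} and for \eqref{F se anula 2}, I would add the hypothesis $F^{(1)}(0)=0$. Then \eqref{Corf1a} (resp. \eqref{Corf1b}) already gives $F^{(3)}(0)=0$ (and in the even case also $F^{(2)}(0)=0$), and one reruns the same bootstrap but now tracking \emph{all} derivatives rather than only the even ones: the nonlinear block in \eqref{dtne} involves only factors $F^{(c_i+1)}(0)$ with $c_i+1\ge 1$, and once $F^{(1)}(0)=0$ every such product with any index equal to $1$ drops out, while the surviving products have all indices $\ge 2$ and hence total order $\ge 2q\ge 4$; pairing this with the top-order coefficients $\boldsymbol\zeta_{1,2n-1}(x),\boldsymbol\zeta_{1,2n}(x)$ from \eqref{nonzero} and inducting on $n$ shows $F^{(n)}(0)=0$ for all $n$. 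Finally, real analyticity of $F$ near $0$ upgrades ``all Taylor coefficients vanish'' to $F\equiv 0$ in a neighborhood of $0$ (this is exactly why the analyticity hypothesis is imposed, as emphasised in the remark on $e^{-1/s^2}$), which will then feed into the later sections.

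The main obstacle I anticipate is the careful bookkeeping in the nonlinear block $\sum_{k_1+\cdots+k_q=\ell}\sum_{c_i}\prod_i F^{(c_i+1)}(0)\sigma^{c_i+1}$: one must check that, at the order of $\sigma$ matching $2n-1$ or $2n$, no product of \emph{lower-order} surviving factors $F^{(c_i+1)}(0)$ can conspire to reach that power of $\sigma$ without involving a factor that the inductive hypothesis has not yet killed — i.e. that the degree in $\sigma$ of any nonzero nonlinear term is strictly controlled by $\ell\le 2n-3$ so it cannot interfere with the extraction of $F^{(2n-1)}(0)$ and $F^{(2n)}(0)$. Because each factor contributes $\sigma^{c_i+1}$ and $\sum c_i=\ell\le 2n-3$, the total $\sigma$-degree of a nonlinear term is $\ell+q\le 2n-3+q$; for $q=3$ this is $2n$, so in the borderline integrable case $q=3$, $N=1$ one must separately verify that such a top-degree nonlinear contribution is itself a product of already-vanishing factors (it is, since $c_1+c_2+c_3=2n-3$ odd forces some $c_i$ even, i.e. some factor $F^{(\text{odd})}(0)$ which the induction or the hypothesis $F^{(1)}(0)=0$ controls). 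Handling this edge case cleanly, rather than the generic large-$q$ case, is where the real care is needed.
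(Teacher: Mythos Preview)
Your overall strategy matches the paper's: induct using Proposition~\ref{irradiacion}, evaluate \eqref{dtne} at $\sigma:=\partial_1 s(x_1,t_0)=\pm 2\alpha$, and add/subtract to separate even and odd powers of $\sigma$, then use \eqref{nonzero} to peel off $F^{(2n-1)}(0)$ and $F^{(2n)}(0)$. However, the bookkeeping you flag as ``the main obstacle'' is where your argument goes wrong.

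In the $q=3$ case you write that ``$c_1+c_2+c_3=2n-3$ odd forces some $c_i$ even, i.e.\ some factor $F^{(\text{odd})}(0)$, which the induction controls.'' This is backwards: an odd sum of three integers forces at least one $c_i$ to be \emph{odd}, hence the corresponding factor $F^{(c_i+1)}(0)$ has \emph{even} index $c_i+1\le 2n-2$, and it is \emph{that} factor which vanishes by the inductive hypothesis. A factor $F^{(\text{odd})}(0)$ is precisely what the hypothesis does \emph{not} kill. More seriously, you present $q=3$ as the delicate edge case and suggest large odd $q$ is routine, but your degree bound $\ell+q\le 2n-3+q$ exceeds $2n$ for \emph{every} odd $q\ge 3$, so a degree argument alone never isolates the $\sigma^{2n}$ coefficient. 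The paper's argument is uniform in odd $q$ and is pure parity: once the even derivatives up to order $2n-2$ vanish, every surviving factor in the nonlinear block has $c_i+1$ odd, so each contributes an odd power of $\sigma$; a product of $q$ such factors with $q$ odd has odd total $\sigma$-degree, hence never interferes with $\sigma^{2n}$. Adding the two evaluations then leaves exactly $F^{(2n)}(0)\boldsymbol\zeta_{1,2n}(x)=0$.

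For the implications under $F^{(1)}(0)=0$ (second line of \eqref{F se anula 1} and all of \eqref{F se anula 2}), your ``indices $\ge 2$, total order $\ge 2q$'' is also off target. The clean observation, which the paper uses, is that $c_i\le k_i$ with $\sum k_i=\ell\le 2n-3$ forces $c_i+1\le 2n-2$, so under the full inductive hypothesis $F^{(1)}(0)=\cdots=F^{(2n-2)}(0)=0$ \emph{every} factor in the nonlinear block vanishes outright, and \eqref{dtne} collapses to $F^{(2n-1)}(0)\,\sigma^{2n-1}\boldsymbol\zeta_{1,2n-1}(x)+F^{(2n)}(0)\,\sigma^{2n}\boldsymbol\zeta_{1,2n}(x)=0$, from which the add/subtract trick and \eqref{nonzero} finish.
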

The main conclusion from this corollary is that $F^{(1)}(0)\neq 0$, otherwise the solution is trivial.

\medskip

In the next subsection, we prove this corollary. Subsection \ref{proof_irradiacion} is devoted to the proof of Proposition \ref{irradiacion}.

\subsection{Proof of Corollary \ref{F se anula}}

First, we will prove statement $(ii)$.  Let's assume that $q$ is even. Using Lemma \ref{Corf1}, item $b)$, we have that:
\begin{equation*}
\begin{cases}
	 F^{(1)}(0)=0\quad \mbox{ if and only if }\quad   F^{(2)}(0)=0 \quad \mbox{ and,}\\
	\mbox{if } \quad F^{(1)}(0)=0, \quad \mbox{ then } \quad F^{(3)}(0)=0.
\end{cases}
\end{equation*}
We show that $F^{(1)}(0)=0$ implies $F^{(n)}(0)=0$ for every $n\in\mathbb{N}$. We use mathematical induction on $n$. So, let us assume 
assume that $F^{(1)}(0)=0$ implies $F^{(2)}(0)=F^{(3)}(0)=F^{(4)}(0)=\cdots=F^{(2n-3)}(0)=F^{(2n-2)}(0)=0$; then \eqref{dtne} of Proposition \eqref{irradiacion} implies that:
\begin{equation}\label{previa}
	\begin{aligned}
	\partial_{1}s(x_{1},t_{0})^{2n-1}F^{(2n-1)}(0){\bf \zeta_{1,2n-1} }(x)+ 	\partial_{1}s(x_{1},t_{0})^{2n}F^{(2n)}(0){\bf \zeta_{1,2n} }(x)=0.
	\end{aligned}
\end{equation}	
where 	${\bf \zeta_{1,2n-1} }(x)$ and ${\bf \zeta_{1,2n} }(x)$ are nonzero functions that do not depend on $t_{0}$. If we evaluate \eqref{previa} at $t_0= t_{1}$ and $t_0=t_{2}$  such that both $s(x_{1},t_{0})=0$, $\partial_{1} s(x_{1},t_{1})=2\alpha $ and $\partial_{1} s(x_{1},t_{2})=-2\alpha $, we obtain the following equations:  
\begin{equation*}
	\begin{aligned}
F^{(2n-1)}(0){\bf \zeta_{1,2n-1} }(x)+ 	F^{(2n)}(0){\bf \zeta_{1,2n} }(x)=0,
	\end{aligned}
\end{equation*}	
and
\begin{equation*}
	\begin{aligned}
		-F^{(2n-1)}(0){\bf \zeta_{1,2n-1} }(x)+ 	F^{(2n)}(0){\bf \zeta_{1,2n} }(x)=0.
	\end{aligned}
\end{equation*}	
If we add these last two equations, we obtain:
\begin{equation*}
	\begin{aligned}
		F^{(2n)}(0){\bf \zeta_{1,2n} }(x)=0,
	\end{aligned}
\end{equation*}	
and if we subtract them, we obtain
\begin{equation*}
	\begin{aligned}
		2F^{(2n-1)}(0){\bf \zeta_{1,2n-1} }(x) =0.
	\end{aligned}
\end{equation*}	
Since ${\bf \zeta_{1,2n-1} }(x)$ and ${\bf \zeta_{1,2n} }(x)$ are nonzero functions, we conclude that $F^{(2n-1)}(0)=F^{(2n)}(0)=0$, proving \eqref{F se anula 2}, just as intended.
	
\medskip	
	
Next we will prove statement $(i)$.  Let us assume that $q$ is odd. Using Lemma \ref{Corf1}, item $a)$, we know that $F^{(2)}(0)=0$. We will show that $F^{(2n)}(0)=0$. Again we will use mathematical induction on $n$. So, let us assume that $F^{(2)}(0)=F^{(4)}(0)=\cdots=F^{(2n-2)}(0)=0$. Equation \eqref{dtne} implies that:
	\begin{equation*}
		\begin{split}
			0&=	\sum_{\ell=1}^{n}F^{(2\ell-1)}(0) \left( \partial_{1}s(x_{1},t_{0}) \right)^{2\ell-1}{\bf \zeta_{1,2\ell-1} }(x)+F^{(2n)}(0) \left(\partial_{1}s(x_{1},t_{0}) \right)^{2n}{\bf \zeta_{1,2n} }(x)\\\
			&\quad + \sum_{\ell=0}^{2n-3}  {\bf \zeta_{2,\ell} }(x) \sum_{k_{1}+k_{2}+\dots+k_{q}=\ell}  \sum_{\substack{ 0 \leq 2c_{1} \leq k_{1} \\ 0 \leq 2c_{2} \leq k_{2}  \\ {\small\vdots} \\ 0 \leq 2c_{n} \leq k_{n} }}  \prod_{i=1}^{q} F^{2c_{i}+1}(0)\partial_{1}s(x,t_{0})^{2c_{i}+1} ,
		\end{split}
	\end{equation*}	
	where 	${\bf \zeta_{i,\ell} }(x), i=1,2$ are functions that do not depend on $t_{0}$. Similarly to the previous step,  if we evaluate the previous expression at $t_0= t_{1}$ and $t_0=t_{2}$  such that both $s(x_{1},t_{0})=0$, $\partial_{1} s(x_{1},t_{1})=2\alpha $ and $\partial_{1} s(x_{1},t_{2})=-2\alpha $, we obtain the following equations 
	\begin{equation*}
		\begin{split}
			0&=	\sum_{\ell=1}^{n}F^{(2\ell-1)}(0)(2\alpha)^{(2\ell-1)}{\bf \zeta_{1,2\ell-1} }(x)+F^{(2n)}(0)(2\alpha )^{(2n)}{\bf \zeta_{1,2n} }(x)\\\
			& \quad + \sum_{\ell=0}^{2n-3}  {\bf \zeta_{2,\ell} }(x) \sum_{k_{1}+k_{2}+\dots+k_{q}=\ell}  \sum_{\substack{ 0 \leq 2c_{1} \leq k_{1} \\ 0 \leq 2c_{2} \leq k_{2}  \\ {\small\vdots} \\ 0 \leq 2c_{n} \leq k_{n} }}  \prod_{i=1}^{q} F^{2c_{i}+1}(0)(2\alpha)^{2c_{i}+1},
		\end{split}
	\end{equation*}	
	and
	\begin{equation*}
		\begin{split}
			0&=	-\sum_{\ell=1}^{n}F^{(2\ell-1)}(0)(2\alpha )^{(2\ell-1)}{\bf \zeta_{1,2\ell-1} }(x)+F^{(2n)}(0)(2\alpha )^{(2n)}{\bf \zeta_{1,2n} }(x)\\\
			& \quad -\sum_{\ell=0}^{2n-3}  {\bf \zeta_{2,\ell} }(x) \sum_{k_{1}+k_{2}+\dots+k_{q}=\ell}  \sum_{\substack{ 0 \leq 2c_{1} \leq k_{1} \\ 0 \leq 2c_{2} \leq k_{2}  \\ {\small\vdots} \\ 0 \leq 2c_{n} \leq k_{n} }}  \prod_{i=1}^{q} F^{2c_{i}+1}(0)(2\alpha)^{2c_{i}+1}.
		\end{split}
	\end{equation*}	
	Consequently,
	\begin{equation*}	
		\begin{split}
			0&=2F^{(2n)}(0)(2\alpha m)^{(2n)}{\bf \zeta_{1,2n} }(x)=2	F^{(2n)}(0)(2\alpha m)^{(2n)}{\bf \xi^{2}_{2n}}(x)\\
			&=2	F^{(2n)}(0)(2\alpha m)^{(2n)}m^{-3}p(x)^{2n}, 
		\end{split}
	\end{equation*}	
	for every $x\in \mathbb R^N$. Since $p(x)^{2n}\neq 0$,  we get that $F^{(2n)}(0)=0$. Moreover, the equation \eqref{dtne}  transforms into
	\begin{equation}\label{imder}
		\begin{split}
			0= &	\sum_{\ell=1}^{n-1}F^{(2\ell-1)}(0){\bf \zeta_{1,2\ell-1} }(x)+ F^{(2n-1)}(0){\bf \zeta_{1,2n-1} }(x)\\
			& +\sum_{\ell=0}^{2n-3}  {\bf \zeta_{2,\ell} }(x) \sum_{k_{1}+k_{2}+\dots+k_{q}=\ell}  \sum_{\substack{ 0 \leq 2c_{1} \leq k_{1} \\ 0 \leq 2c_{2} \leq k_{2}  \\ {\small\vdots} \\ 0 \leq 2c_{n} \leq k_{n} }}  \prod_{i=1}^{q} F^{2c_{i}+1}(0)(2\alpha)^{2c_{i}+1}.
		\end{split}
	\end{equation}	
Now we show that $F^{(1)}(0)=0$ implies $F^{(n)}(0)=0$ for every $n\in\mathbb{N}$. From Corollary \ref{Corf1}  we get that $F^{(1)}(0)=0$ implies $F^{(3)}(0)=0$. 
	If we assume that $F^{(1)}(0)=0$ implies $F^{(3)}(0)=F^{(4)}(0)=\cdots=F^{(2n-3)}(0)=0$ then, equation \eqref{imder} implies that
\begin{equation*}
F^{(2n-1)}(0){\bf \zeta_{1,2n-1} }(x)=0.
\end{equation*}	
Since ${\bf \zeta_{1,2n-1}}$ is a nontrivial function we conclude that $ F^{(2n-1)}(0)=0$, proving \eqref{F se anula 1}. 

\subsection{Proof of Proposition \ref{irradiacion}}\label{proof_irradiacion} The terms $\zeta_{\mathbf{1},\ell}$ and $\zeta_{\mathbf{2},\ell}$ in \eqref{dtne} will be related to $\partial_{t}^{2n-3}	\left(\partial_t v +\partial_{x_1}(\Delta v)\right)$ and $2(\partial_{x_1}v)^q$ respectively. Indeed
we start computing  $\partial_{t}^{2n-3}\partial_{t}v $. Using Lemma \ref{Bell_lema1} with $h=2n-2$ and $G$ as in \eqref{G_s}, 
	\begin{equation*}
		\begin{split}
			\partial_{t}^{2n-3}	\partial_{t}v(x,t)&= \partial_{t}^{2n-2}( F\circ G) (x,t)\\
			&=\sum_{\ell=1}^{2n-2} F^{(\ell)} (G(x,t)) B_{2n-2,\ell} \left( {\bf \partial_{t}^{(v)}G(x,t):  v} \right)\\
			&=\sum_{\ell=1}^{2n-2} F^{(\ell)} (G(x,t))p(x)^{\ell} B_{2n-2,\ell}\left( {\bf\partial_{t}^{(v)}s(x_{1},t): v} \right).
		\end{split}
	\end{equation*}
	Evaluating this expression at $t=t_{0}$ (recall that by hypothesis $t_0$ is such that $s(x_{1},t_{0})=0$), and using Lemma \ref{PEB}, we obtain: 
	\begin{equation}\label{dtne1}
		\begin{aligned}
			\partial_{t}^{2n-3}	\partial_{t}v(x,t)\Big|_{(x,t_{0})}&=\sum_{\ell=1}^{2n-2} F^{(\ell)} (0) \left( \partial_{t}s(x_{1},t_{0}) \right)^{\ell}p(x)^{\ell} A_{2n-2,\ell}\\
			&:=\sum_{\ell=1}^{2n-2} F^{(\ell)} (0) \left( \partial_{t}s(x_{1},t_{0}) \right)^{\ell} {\bf \xi^{1}_{\ell}}(x).
		\end{aligned}
	\end{equation}
	In the following, we calculate the term $\partial_{t}^{(2n-3)}\left(\partial_{1}(\Delta v)  \right) $. We start with $\partial_{t}^{(2n-3)}\partial^{3}_{1}v$. Again using Lemma \ref{Bell_lema1},
	\begin{equation*}
		\begin{split}
\partial_{t}^{(2n-3)}\partial^{3}_{1}v(x,t)
			&= \partial^{3}_{1}(\partial_{t}^{(2n-3)}(F\circ G))\\
			&=\partial^{3}_{1} \left( \sum_{\ell=1}^{2n-3}F^{(\ell)}(G(x,t))p(x)^{\ell}B_{(2n-3),\ell}\left(\bf \partial^{(v)}_{t}s(x_{1},t):v\right)   \right).
			\end{split}
	\end{equation*}
	Expanding the last term above using the formula for the derivative of a product (see e.g. \eqref{ejemplo1}),
	\begin{equation*}
		\begin{split}
			&\partial_{t}^{(2n-3)}\partial^{3}_{1}v(x,t)\\
			&=\sum_{\ell=1}^{2n-3}\sum_{k_{1}+k_{2}+k_{3}=3}\binom{3}{k_{1},k_{2},k_{3}} \\
			& \hspace{3cm}\times\partial_{1}^{k_{1}}\left(F^{(\ell)}(G(x,t))\right) \partial_{1}^{k_{2}}\left(p(x)^{\ell} \right) \partial_{1}^{k_{3}}\left(B_{(2n-3),\ell}\left( \bf\partial^{(v)}_{t}s(x_{1},t):v\right) \right),
		\end{split}
	\end{equation*}
where the sum extends over all $3$-tuples $(k_{1},k_{2},k_{3})$ of non-negative integers with $k_{1}+k_{2}+k_{3}=3$. If we adopt the notation $C_{0,0}(x)=1$ and $C_{i,0}(x)=0$ for $i=1,2,3$, evaluating the previous expression at $t_{0}$, from Lemmas \ref{ADf} and \ref{ADhBts},  we obtain: 
	\begin{equation*}
		\begin{split}
			&\partial_{t}^{(2n-3)}\partial^{3}_{1}v(x,t) \Big|_{(x,t_{0})}\\
			&=\sum_{\ell=1}^{2n-3}\sum_{k_{1}+k_{2}+k_{3}=3}\binom{3}{k_{1},k_{2},k_{3}}\left( \sum_{k=0}^{k_{1}}F^{(\ell+k)}(0)C_{k_{1},k}(x)\left(m^{-1}  \partial_{t}s(x_{1},t_{0})\right)^{k}\right)\times \\  & \qquad \ \partial_{1}^{k_{2}}\left(p(x)^{\ell} \right) 
			A^{(k_{3})}_{(2n-3),\ell} \left( \partial_{t}s(x_{1},t_{0}) \right)^{\ell}\\
			&=\sum_{\ell=1}^{2n-3}\sum_{k_{1}+k_{2}+k_{3}=3}\binom{3}{k_{1},k_{2},k_{3}}\left( \sum_{k=0}^{k_{1}}F^{(\ell+k)}(0)C_{k_{1},k}(x)m^{-k} \left(\partial_{t}s(x_{1},t_{0}) \right)^{\ell+k}\right)\times \\  & \qquad \ \partial_{1}^{k_{2}}\left(p(x)^{\ell} \right) 
			A^{(k_{3})}_{(2n-3),\ell}.
		\end{split}
	\end{equation*}
Reorganizing terms in the previous sum, we obtain:
\begin{equation*}
	\begin{split}
		&\partial_{t}^{(2n-3)}\partial^{3}_{1}v(x,t) \Big|_{(x,t_{0})}\\
		&=\sum_{h=1}^{3}F^{(h)}(0) \left(\partial_{t}s(x_{1},t_{0}) \right)^{h}\\
		&\qquad \times\left( \sum_{k=0}^{h-1}\sum_{k_{1}=k}^{3}\left( \sum_{\substack{0\leq k_{2}\leq 3\\ 0\leq k_{3}\leq 3 \\ k_{1}+k_{2}+ k_{3}=3}}\binom{3}{k_{1},k_{2},k_{3}} C_{k_{1},k}(x)(m^{-k})\partial_{1}^{k_{2}}\left(p(x)^{h-k} \right) 
		A^{(k_{3})}_{(2n-3),h-k} \right)  \right) \\
		&+\sum_{4\leq h\leq 2n-3}F^{(h)}(0) \left( \partial_{t}s(x_{1},t_{0}) \right)^{h}\\
		&\qquad \times\left( \sum_{k=0}^{3}\sum_{k_{1}=k}^{3}\left( \sum_{\substack{0\leq k_{2}\leq 3\\ 0\leq k_{3}\leq 3 \\ k_{1}+k_{2}+ k_{3}=3}}\binom{3}{k_{1},k_{2},k_{3}} C_{k_{1},k}(x)(m^{-k})\partial_{1}^{k_{2}}\left(p(x)^{h-k} \right) 
		A^{(k_{3})}_{(2n-3),h-k} \right)  \right) \\
		&+\sum_{h=1}^{3}F^{(2n-3+h)}(0) \left( \partial_{t}s(x_{1},t_{0}) \right)^{2n-3+h}\\
		&\qquad \times \left( \sum_{k=h}^{3}\sum_{k_{1}=k}^{3}\left( \sum_{\substack{0\leq k_{2}\leq 3\\ 0\leq k_{3}\leq 3 \\ k_{1}+k_{2}+ k_{3}=3}}\binom{3}{k_{1},k_{2},k_{3}} C_{k_{1},k}(x)(m^{-k})\partial_{1}^{k_{2}}\left(p(x)^{(2n-3)+h-k} \right) 
		A^{(k_{3})}_{(2n-3),(2n-3)+h-k} \right)  \right) .
	\end{split}
\end{equation*}
We write the last expression as
	\begin{equation}\label{dtne2}
		\begin{split}
			&\partial_{t}^{(2n-3)}\partial^{3}_{1}v(x,t) \Big|_{(x,t_{0})}:=\sum_{\ell=1}^{2n}F^{(\ell)}(0) \left( \partial_{1}s(x_{1},t_{0}) \right)^{\ell}{\bf \xi^{2}_{\ell}}(x).
		\end{split}
	\end{equation}
For some $\xi^{\mathbf{2}}_{\ell}(x)$, with $\ell=1,\cdots, 2n$ only depending on $x$. In particular, the most important terms $\xi^{\mathbf{2}}_{\mathbf{2n}}$, $\xi^{\mathbf{2}}_{\mathbf{2n-1}} $ are described by 
	\begin{equation*}
		\begin{split}
			{\bf \xi^{2}_{2n-1}}(x)&=C_{3,3}(x)m^{-3}(p(x)^{2n-4})A^{0}_{(2n-3),(2n-4)}\\
			&\quad + C_{2,2}(x)m^{-2}\left(\sum_{k_{2}+k_{3}=1}\binom{3}{2,k_{2},k_{3}}\partial_{1}^{k_{2}}\left(p(x)^{2n-3} \right) 
			A^{(k_{3})}_{(2n-3),(2n-3)}  \right) \\
			&\quad  + C_{3,2}(x)m^{-2}p(x)^{2n-3} A^{0}_{(2n-3),(2n-3)},
		\end{split}
	\end{equation*}
	and, since $A^{0}_{2n-3,2n-4}=A^{1}_{2n-3,2n-3}=0$ and $A^{0}_{2n-3,2n-3}=1$, we have that
	\begin{equation}\label{zeta2n-1}
		\begin{split}
			{\bf \xi^{2}_{2n-1}}(x)	&=3 C_{2,2}(x)m^{-2}\partial_{1}\left(p(x)^{2n-3} \right) + C_{3,2}(x)m^{-2}p(x)^{2n-3}\\
			&=3 p(x)^{2}m^{-2}\partial_{1}\left(p(x)^{2n-3} \right) +6p(x)\partial_{1}(p(x))m^{-2}p(x)^{2n-3}\\
			&=3m^{-2}p(x)^{2n-2}\partial_{1}p(x)\left( (2n-3)p(x)^{2}+2\right) \neq 0.
		\end{split}
	\end{equation}
  Also 
	\begin{equation}\label{zeta2n}
		{\bf \xi^{2}_{2n}}(x)= m^{-3}C_{3,3}(x)p(x)^{2n-3}  
		A^{0}_{(2n-3),(2n-3)}\neq 0.
	\end{equation}
This proves the second assertion in \eqref{nonzero}. Now, we shift our focus to the term $\partial_{t}^{(2n-3)}\partial_{1}\partial^{2}_{j} (v)$ with $j\neq 1$. We compute using Lemma \ref{Bell_lema1}: 
	\begin{equation*}
		\begin{split}
			&\partial_{t}^{(2n-3)}\partial_{1}\partial^{2}_{j} (v)(x,t)\\
			& \quad =\partial_{1}\partial^{2}_{j}\partial_{t}^{(2n-3)} (F\circ G)(x,t)\\
			&\quad=\partial_{1}\partial^{2}_{j} \left( \sum_{\ell=1}^{2n-3}F^{(\ell)}(G(x,t))p(x)^{\ell}B_{(2n-3),\ell}\left(\bf \partial^{(v)}_{t}s(x_{1},t):v\right)   \right)\\	
			&\quad=\partial_{1}\left( \sum_{\ell=1}^{2n-3}\partial^{2}_{j}\left(F^{(\ell)}(G(x,t))p(x)^{\ell} \right) B_{(2n-3),\ell}\left(\bf \partial^{(v)}_{t}s(x_{1},t):v\right)   \right).
\end{split}
\end{equation*}
Expanding the derivatives,
	\begin{equation*}
		\begin{split}
		&\partial_{t}^{(2n-3)}\partial_{1}\partial^{2}_{j} (v)(x,t)\\
			&\quad=\partial_{1}\left( \sum_{\ell=1}^{2n-3}\sum_{g=0}^{2}\binom{2}{g}\partial_{j}^{g} (F^{(\ell)}(G(x,t)))\partial_{j}^{2-g}(p(x)^{\ell})  B_{(2n-3),\ell}\left(\bf \partial^{(v)}_{t}s(x_{1},t):v\right)   \right) \\ 		
			&\quad=\sum_{\ell=1}^{2n-3}\sum_{g=0}^{2}\binom{2}{g} \partial_{j}^{g} \left( F^{(\ell)}(G(x,t)) \right)\sum_{a=0}^{1}\binom{1}{a}\partial^{(1-a)}_{1}\left( \partial_{j}^{2-g}(p(x)^{\ell}) \right)  \partial^{(a)}_{1} \left( B_{(2n-3),\ell}\left(\bf \partial^{(v)}_{t}s(x_{1},t):v\right) \right)   \\
			&\quad\quad+\sum_{\ell=1}^{2n-3}\sum_{g=0}^{2}\binom{2}{g}\partial_{1}\left( \partial_{j}^{g} (F^{(\ell)}(G(x,t))) \right) \partial_{j}^{2-g}(p(x)^{\ell})  B_{(2n-3),\ell}\left(\bf \partial^{(v)}_{t}s(x_{1},t):v\right) .
	\end{split}
	\end{equation*}
	If we evaluate this expression at time $t_{0}$, we obtain from \eqref{G_s} that $G(x,t_0)=0$,
	\begin{eqnarray*}
		\begin{split}
			&\partial_{t}^{(2n-3)}\partial_{1}\partial^{2}_{j} (F\circ G)(x,t) \Big|_{(x,t_{0})}\\
			&= \sum_{\ell=1}^{2n-3}\left( F^{(\ell)}(0)\sum_{a=0}^{1}\partial_{1}^{(1-a)}\left( \partial_{j}^{2}(p(x)^{\ell})\right) \partial_{1}^{a}\left(B_{(2n-3),\ell}\left(\bf \partial^{(v)}_{t}s(x_{1},t_0):v\right) \right)  \right) \\
			&\quad  +  \sum_{\ell=1}^{2n-3}\left( \sum_{g=0}^{2}\binom{2}{g}F^{(\ell+1)}(0)\partial_{1}s(x_{1},t_{0})\partial_{j}^{g}(p(x))  \partial_{j}^{2-g}(p(x)^{\ell})  B_{(2n-3),\ell}\left(\bf \partial^{(v)}_{t}s(x_{1},t_0):v\right) \right).
		\end{split}
	\end{eqnarray*}
Using Lemmas \ref{ADhBts} and \ref{PEB}, 
\[
\begin{aligned}
&\partial_{t}^{(2n-3)}\partial_{1}\partial^{2}_{j} (F\circ G)(x,t) \Big|_{(x,t_{0})}\\
			&= \sum_{\ell=1}^{2n-3}\left( F^{(\ell)}(0)\sum_{a=0}^{1}\partial_{1}^{1-a}\left( \partial_{j}^{2}(p(x)^{\ell})\right) \left(A^{a}_{2n-3,\ell}  \partial_{t}s(x_{1},t_{0})^{\ell}\right)  \right) \\
			&\quad +  \sum_{\ell=1}^{2n-3}\left( \sum_{g=0}^{2}\binom{2}{g}F^{(\ell+1)}(0)\partial_{1}s(x_{1},t_{0})\partial_{j}^{g}(p(x))  \partial_{j}^{2-g}(p(x)^{\ell})  A_{2n-3,\ell} \partial_{t}s(x_{1},t_{0})^{\ell} \right)   \\		
			&= \sum_{\ell=1}^{2n-3}F^{(\ell)}(0)\partial_{t}s(x_{1},t_{0})^{\ell}\left( \sum_{a=0}^{1}\partial_{1}^{1-a}\left( \partial_{j}^{2}(p(x)^{\ell})\right) A^{a}_{2n-3,\ell}  \right) \\
			&\quad +  \sum_{\ell=2}^{2n-2}F^{(\ell)}(0)\partial_{t}s(x_{1},t_{0})^{\ell}m^{-1}\left( \sum_{g=0}^{2}\binom{2}{g}\partial_{j}^{g}(p(x))  \partial_{j}^{2-g}(p(x)^{\ell-1})  A_{2n-3,\ell-1}  \right).
\end{aligned}
\]
	Rearranging the terms, we can write
	\begin{eqnarray}\label{dtne3}
		\begin{split}
			\partial_{t}^{(2n-3)}\partial_{1}\partial^{2}_{j} (F\circ G)(x,t) \Big|_{(x,t_{0})}:=  \sum_{\ell=1}^{2n-2}F^{(\ell)}(0)\partial_{t}s(x_{1},t_{0})^{\ell}{\bf \xi^{3}_{\ell}}(x),
		\end{split}
	\end{eqnarray}
with
\[
\begin{aligned}
{\bf \xi^{3}_{1}}(x) =&~{}  \sum_{a=0}^{1}\partial_{1}^{1-a}\left( \partial_{j}^{2}(p(x))\right) A^{a}_{2n-3,1},  \\
{\bf \xi^{3}_{\ell}}(x) =&~{} \sum_{a=0}^{1}\partial_{1}^{1-a}\left( \partial_{j}^{2}(p(x)^{\ell})\right) A^{a}_{2n-3,\ell}  \\
&~{} + m^{-1}\left( \sum_{g=0}^{2}\binom{2}{g}\partial_{j}^{g}(p(x))  \partial_{j}^{2-g}(p(x)^{\ell-1})  A_{2n-3,\ell-1}  \right), \quad 2\leq \ell \leq 2n-3, \\
{\bf \xi^{3}_{2n-2}}(x) =&~{} m^{-1}\left( \sum_{g=0}^{2}\binom{2}{g}\partial_{j}^{g}(p(x))  \partial_{j}^{2-g}(p(x)^{2n-3})  A_{2n-3,2n-3}  \right). 
\end{aligned}
\]
Notice that from \eqref{dtne1}, \eqref{dtne2} and \eqref{dtne3}
\begin{equation}\label{defzeta}\zeta_{\mathbf{1},\ell}(x):=\left\{
  \begin{array}{ll}
     \xi^\mathbf{1}_\ell(x)+\xi^\mathbf{2}_\ell(x)+\xi^\mathbf{3}_\ell(x), & \hbox{if $\ell=1,\cdots,(2n-2)$;} \\
     \xi^\mathbf{2}_\ell(x), & \hbox{if $\ell=2n-1,2n$}.
  \end{array}
\right.\end{equation}
Clearly this relation   and \eqref{zeta2n-1}, \eqref{zeta2n}
prove the assertion in \eqref{nonzero}.

\medskip

We will now address the nonlinear term $\partial_{t}^{2n-3}	((\partial_{1}v)^{q})$:
	\begin{equation*}
		\begin{split}
			&\partial_{t}^{2n-3}	\left( \left(\partial_{1}v(x,t) \right)^{q} \right) \\
			&= \partial_{t}^{2n-3} \left( \left(  \partial_{1}(F\circ G) (x,t) \right)^{q} \right)\\
			&=\partial_{t}^{2n-3}\left( \left( F^{(1)}(G(x,t)) \right)^{q} \left( \partial^{(1)}_{1}G(x,t) \right)^{q}\right)\\
			&=\sum_{\ell=0}^{2n-3} \binom{2n-3}{\ell }\partial_{t}^{\ell}\left(  \left( F^{(1)}(G(x,t)) \right)^{q} \right) \partial_{t}^{(2n-3)-\ell}\left( \left( \partial_{1}G(x,t) \right)^{q} \right).
	\end{split}
	\end{equation*}
Expanding using the classical Leibnitz rule (see \eqref{ejemplo1}),
\begin{equation*}
\begin{split}
			&\partial_{t}^{2n-3}	\left( \left(\partial_{1}v(x,t) \right)^{q} \right)\\
			&=\sum_{\ell=0}^{2n-3} \binom{2n-3}{\ell }\left(\sum_{k_{1}+k_{2}+\dots+k_{q}=\ell}  \binom{\ell}{k_{1},k_{2},\dots,k_{q}}\prod_{i=1}^{q}\partial_{t}^{(k_{i})} \left( F^{(1)}(G(x,t)) \right)  \right)\times\\
			&\qquad \partial_{t}^{(2n-3)-\ell}\Big( \Big(\partial_{1}p(x)s(x,t)+p(x)\partial_{1}s(x,t) \Big)^{q}\Big).
\end{split}
\end{equation*}
On the other hand, using the binomial theorem, we have
\begin{equation*}
	\begin{split}
		&\partial_{t}^{(2n-3)-\ell}\Big( \Big(\partial_{1}p(x)s(x,t)+p(x)\partial_{1}s(x,t) \Big)^{q}\Big)\\
		&=\partial_{t}^{(2n-3)-\ell}\left(\sum_{k=0}^{q}\binom{q}{k}(\partial_{1}p(x)s(x,t) )^{k}(p(x)\partial_{1}s(x,t))^{q-k} \right)\\
		&=\sum_{k=0}^{q}\binom{q}{k}(\partial_{1}p(x))^{k}(p(x))^{q-k}  \partial_{t}^{(2n-3)-\ell}\left(s(x,t)^{k} \left(\partial_{1}s(x,t) \right)^{q-k} \right).
	\end{split}
\end{equation*}
Therefore, invoking Lemma \ref{Bell_lema1}, we have
\begin{equation*}
\begin{split}
			&\partial_{t}^{2n-3}	\left( \left(\partial_{1}v(x,t) \right)^{q} \right)\\
			&=\sum_{\ell=0}^{2n-3} \binom{2n-3}{\ell }\\
			& \qquad \times \left(\sum_{k_{1}+k_{2}+\dots+k_{q}=\ell}  \binom{\ell}{k_{1},k_{2},\dots,k_{q}}\prod_{i=1}^{q} \left(\sum_{k=0}^{k_{i}}F^{(k+1)}(G(x,t))p(x)^{k}B_{k_{i},k}({ \bf \partial_{t}^{(v)}s(x,t):(v) }) \right)   \right)\\
			&\qquad \times \left( \sum_{k=0}^{q}\binom{q}{k}(\partial_{1}p(x))^{k}(p(x))^{q-k}  \partial_{t}^{(2n-3)-\ell}\left(s(x,t)^{k} \left(\partial_{1}s(x,t) \right)^{q-k} \right)    \right).
		\end{split}
	\end{equation*}
First of all, from Lemma \ref{PEB},
\[
\sum_{k=0}^{k_{i}}F^{(k+1)}(G(x,t))p(x)^{k}B_{k_{i},k}({ \bf \partial_{t}^{(v)}s(x,t_0):(v) })  = \sum_{k=0}^{k_{i}}F^{(k+1)}(0) A_{k_{i},k}^0 \left( \partial_{t}s(x,t_{0}) \right)^{k}p(x)^{k}.
\]
We concentrate now in the terms $\partial_{t}^{(2n-3)-\ell}\left(s(x,t)^{k} \left(\partial_{1}s(x,t) \right)^{q-k} \right) $ above. If we set $r=\min\{(2n-3)-\ell, q \}$, then by Lemma \ref{product_law0} we have that:
\begin{equation*}
\begin{split}
		&\left( \sum_{k=0}^{q}\binom{q}{k}(\partial_{1}p(x))^{k}(p(x))^{q-k}  \partial_{t}^{(2n-3)-\ell}\left(s(x,t)^{k}\partial_{1}s(x,t)^{q-k} \right)    \right)\Bigg|_{(x,t_{0})}\\
		&\qquad = \left( \partial_{1}s(x,t_{0}) \right)^{q}	\left( \sum_{k=0}^{r}\binom{q}{k}(\partial_{1}p(x))^{k}(p(x))^{q-k}  A_{k,q-k,(2n-3)-\ell}^{t}   \right).
\end{split}
\end{equation*}	
Consequently, if we evaluate at time $t=t_{0}$, and use that from \eqref{G_s} $\left(\partial_{1}s(x,t_{0}) \right)^{q} = (2\alpha )^q$, we obtain:
	\begin{equation*}
		\begin{split}
			&\partial_{t}^{2n-3}	(\partial_{1}v(x,t)^{q})\Big|_{(x,t_{0})}\\
			&=\sum_{\ell=0}^{2n-3} \binom{2n-3}{\ell }\left(\sum_{k_{1}+k_{2}+\dots+k_{q}=\ell}  \binom{\ell}{k_{1},k_{2},\dots,k_{q}}\prod_{i=1}^{q} \left(\sum_{k=0}^{k_{i}}F^{(k+1)}(0) \left( \partial_{t}s(x,t_{0}) \right)^{k}p(x)^{k} A^{0}_{k_{i},k} \right)   \right)\times\\
			&\qquad   \left(\partial_{1}s(x,t_{0}) \right)^{q}	\left( \sum_{k=0}^{r}\binom{q}{k}(\partial_{1}p(x))^{k}(p(x))^{q-k}  A_{k,q-k,(2n-3)-\ell}^{t}  \right)\\
			&=\sum_{\ell=0}^{2n-3} \binom{2n-3}{\ell }\left(\sum_{k_{1}+k_{2}+\dots+k_{q}=\ell}  \binom{\ell}{k_{1},k_{2},\dots,k_{q}}\prod_{i=1}^{q} \left(\sum_{k=0}^{k_{i}}F^{(k+1)}(0)   \left( \partial_{t}s(x,t_{0}) \right)^{k+1} p(x)^{k} A_{k_{i},k}^{0} \right)   \right)\times\\
			&\qquad \left( \sum_{k=0}^{r}\binom{q}{k}(\partial_{1}p(x))^{k}(p(x))^{q-k} A_{k,q-k,(2n-3)-\ell}^{t}   \right).
		\end{split}
	\end{equation*}
Here we use that
\begin{equation*}
\begin{split}
& \prod_{i=1}^{q} \left(\sum_{k=0}^{k_{i}}F^{(k+1)}(0) \left( \partial_{t}s(x,t_{0}) \right)^{k}p(x)^{k}  A^{0}_{k_{i},k} \right) \left( \partial_{1}s(x,t_{0}) \right)^{q} \\
& \quad  = \prod_{i=1}^{q} \left(\sum_{k=0}^{k_{i}}F^{(k+1)}(0)  \left( \partial_{1}s(x,t_{0}) \right)^{k+1} p(x)^{k} m^{k} A_{k_{i},k}^{0} \right).   
\end{split}
\end{equation*}
Reorganizing the terms in the previous equation allows us to write it in the following form:
	\begin{equation}\label{dtne4}
		\begin{split}
			&\partial_{t}^{2n-3}	\left( \partial_{1}v(x,t)^{3} \right) \Big|_{(x,t_{0})}\\
			& =\sum_{\ell=0}^{2n-3}   \sum_{k_{1}+k_{2}+\dots+k_{q}=\ell}   \left(\sum_{\substack{ 0 \leq c_{1} \leq k_{1} \\ 0 \leq c_{2} \leq k_{2}  \\ \cdots \\ 0 \leq c_{n} \leq k_{n} }}  \prod_{i=1}^{q} F^{(c_{i}+1)}(0) \left( \partial_{1}s(x,t_{0}) \right)^{c_{i}+1} \right) \zeta_{\mathbf{2},\ell}(x).
		\end{split}
	\end{equation}
Finally, equation \eqref{dtne} is deduced by combining equations \eqref{dtne1}, \eqref{dtne2}, \eqref{dtne3}, \eqref{defzeta} and \eqref{dtne4}.

\medskip

We finish this Section with the following useful property:

\begin{lemma}\label{eq_simplificada_1}
Assume $N=1$. Let $q\in\{1,2,\ldots\}$ be odd, and $\alpha>0$ and $m\in\R$ free parameters. Consider $v$ as in \eqref{def_v}, and $G$, $s$ as in \eqref{G_s}. Let $t_0$ be a time under which $s(x_1,t_0)=G(x,t_0)=0$.  Then the following expansion holds true: 
\begin{equation}\label{eq_simplificada_2}
	\begin{split}
  0=&~{} 8 \alpha  F^{(1)}(0) \partial_1^{(3)} p(x)+ 2 \alpha  m F^{(1)}(0) \partial_{1} p(x) -32 \alpha ^3 F^{(1)}(0) \partial_{1} p(x)\\
  &~{} +96 \alpha ^3 F^{(3)}(0) p(x)^2 \partial_{1}p(x) + 2^{q+2} q\alpha^q   \left(F^{(1)}(0) \right)^q \left(  p(x)\right)^{q-1}\partial_{1}p(x).
	\end{split}
\end{equation}
\end{lemma}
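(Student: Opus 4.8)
The plan is to derive \eqref{eq_simplificada_2} by applying one further $x_1$-derivative to the equation satisfied by $v$ and then evaluating at a well-chosen degenerate time, in the spirit of the proof of Lemma \ref{Corf1} but carrying an extra spatial derivative. Since $N=1$ (so that $\Delta=\partial_1^{(2)}$ and the double sum over $j\ge 2$ in \eqref{eq_simplificada} disappears) and $v$ solves \eqref{A2}, Lemma \ref{eq_simplificada_0} gives the pointwise identity $\partial_t v+\partial_1^{(3)}v+2(\partial_1 v)^q=0$, valid for all $(t,x_1)\in\RR^2$. Differentiating it in $x_1$ (legitimate, as it holds everywhere) yields
\[
\partial_1\partial_t v+\partial_1^{(4)}v+2q(\partial_1 v)^{q-1}\partial_1^{(2)}v=0 \qquad \text{for all } (t,x_1).
\]

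Assuming $m\neq 0$ (the case $m=0$ is addressed at the end), I would fix $x_1$ and pick $t_0=t_0(x_1)$ with $2\alpha(x_1+mt_0)\in 2\pi\Z$, so that $s(x_1,t_0)=0$, $G(x,t_0)=0$ and $\partial_1 s(x_1,t_0)=2\alpha$. At such a time the relevant $s$-derivatives collapse: $\partial_1^{(2)}s|_{t_0}=\partial_1^{(4)}s|_{t_0}=0$, $\partial_1^{(3)}s|_{t_0}=-(2\alpha)^2\partial_1 s|_{t_0}$, $\partial_t s|_{t_0}=m\,\partial_1 s|_{t_0}$ and $\partial_1\partial_t s|_{t_0}=0$. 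Hence, by Leibniz, at $(x,t_0)$ one has
\[
\partial_1 G=2\alpha p,\quad \partial_1^{(2)}G=4\alpha\,\partial_1 p,\quad \partial_1^{(3)}G=2\alpha\big(3\partial_1^{(2)}p-(2\alpha)^2 p\big),\quad \partial_1^{(4)}G=2\alpha\big(4\partial_1^{(3)}p-4(2\alpha)^2\partial_1 p\big),
\]
each proportional to $\partial_1 s|_{t_0}$ — the structural fact encoded in Lemma \ref{ADf}.

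I would then evaluate the three terms at $(x,t_0)$ using Fa\`a di Bruno (Lemma \ref{Bell_lema1}), the explicit Bell polynomials \eqref{bells}, and Lemma \ref{ADf}, together with the fact that $F^{(2)}(0)=F^{(4)}(0)=0$ for $q$ odd (Lemma \ref{Corf1}(a) and Corollary \ref{F se anula}(i)). Indeed: $\partial_1\partial_t v=\partial_1\big(F^{(1)}(G)p\,\partial_t s\big)$ reduces to $F^{(1)}(0)\,m\,\partial_1 s|_{t_0}\,\partial_1 p$ (the $F^{(2)}(0)$-term and the $p\,\partial_1\partial_t s$-term vanishing); from $\partial_1 v|_{t_0}=F^{(1)}(0)\,\partial_1 s|_{t_0}\,p$ and $\partial_1^{(2)}v|_{t_0}=2F^{(1)}(0)\,\partial_1 s|_{t_0}\,\partial_1 p$, the nonlinear term equals $4q\,(F^{(1)}(0))^q(\partial_1 s|_{t_0})^q\,p^{q-1}\partial_1 p$; and in $\partial_1^{(4)}v=\sum_{k=1}^{4}F^{(k)}(G)B_{4,k}$ the $k=2,4$ summands drop, the $k=1$ summand is $F^{(1)}(0)\,\partial_1^{(4)}G|_{t_0}$, and the $k=3$ summand is $6F^{(3)}(0)\big(\partial_1 G|_{t_0}\big)^2\partial_1^{(2)}G|_{t_0}$. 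Adding the three contributions, substituting the displayed values of the derivatives of $G$, inserting $\partial_1 s|_{t_0}=2\alpha$ and simplifying the powers of $2\alpha$ gives exactly \eqref{eq_simplificada_2}. Since every surviving monomial carries an odd power of $\partial_1 s|_{t_0}$, evaluating instead at a time with $\partial_1 s|_{t_0}=-2\alpha$ just reverses the overall sign; alternatively, averaging the two evaluations makes the (even-power-of-$\partial_1 s|_{t_0}$) $F^{(2)}(0)$- and $F^{(4)}(0)$-contributions cancel automatically, so those vanishing facts need not even be invoked.

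I expect the only genuine work to be the fourth-order Fa\`a di Bruno/Bell bookkeeping for $\partial_1^{(4)}v$, together with checking that, upon evaluation at $t_0$, every term containing a positive power of $s$ or an even-order $x_1$-derivative of $s$ drops out — precisely what Lemmas \ref{lemma_B_DG} and \ref{ADf} are designed to provide, so this is routine. Finally, for $m=0$ the above choice of $t_0$ is unavailable, but then $v$ is $t$-independent and \eqref{A2} reduces to $\partial_1^{(2)}(\partial_1 v)+2(\partial_1 v)^q=0$; multiplying by $\partial_1^{(2)}v$, integrating in $x_1$, and using $\partial_1 v,\partial_1^{(2)}v\to 0$ as $|x_1|\to\infty$ (which follows from $p,\partial_1 p,\partial_1^{(2)}p\to 0$) forces $\partial_1 v\equiv 0$ for $q$ odd, so the solution is trivial and there is nothing to prove.
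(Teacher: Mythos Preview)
Your proposal is correct and follows essentially the same route as the paper: differentiate the $N=1$ equation once in $x_1$, evaluate at $t_0$ with $s(x_1,t_0)=0$, compute the surviving pieces via Fa\`a di Bruno/Bell (the paper differentiates the $B_{3,k}$-expansion of $\partial_1^{(3)}v$ instead of writing $B_{4,k}$ directly, which is the same calculation), and drop the $F^{(2)}(0),F^{(4)}(0)$ contributions using Corollary \ref{F se anula}. Your separate treatment of the degenerate case $m=0$ is an extra (correct) observation that the paper's proof does not include.
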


\begin{proof}
From \eqref{eq_simplificada} and \eqref{A2}, we get
\[
	\begin{split}
		0&= \partial_1 \left( \partial_t v+\partial_{x_1}(\Delta v)+2(\partial_{x_1}v)^q \right) \\
		& = \partial_1 \left( F^{(1)}(G(x,t))p(x)\partial^{(1)}_{t}s(x_{1},t) \right) +2 \partial_1 \left( \left( F^{(1)}(G(x,t))\partial^{(1)}_{1}G(x,t)\right)^{q} \right)\\
		&\quad +\sum_{k=1}^{3} \partial_1 \left( F^{(k)}(G(x,t))B_{3,k}\left( \bf{\partial^{(v)}_{1}G(x,t):v} \right) \right).
	\end{split}
\]
Since $F^{(2)}(0)=F^{(4)}(0)=0$ (Corollary \ref{F se anula}), we easily get after evaluation at $t=t_0$,
\[
	\begin{split}
		0 & =  F^{(1)}(0) \partial_1 p(x) 2m\alpha  \\
		& \quad +2q   \left( F^{(1)}(0)\partial_1^{(1)} G(x,t_0) \right)^{q-1}   F^{(1)}(0)  \partial_1 \left(\partial^{(1)}_{1}G(x,t_0)\right)\\
		&\quad +  F^{(3)}(0)  \partial_1^{(1)} G(x,t)  B_{3,2}\left( \bf{\partial^{(v)}_{1}G(x,t):v} \right) \Big|_{(x,t_0)} \\
		& \quad +\sum_{k=1,3}  F^{(k)}(0) \partial_1 \left( B_{3,k}\left( \bf{\partial^{(v)}_{1}G(x,t):v} \right) \right)\Big|_{(x,t_0)}.
	\end{split}
\]
Replacing \eqref{bells},
\[
	\begin{split}
		0 & =  2m\alpha F^{(1)}(0) \partial_1 p(x)   \\
		& \quad +2q   \left( F^{(1)}(0) 2\alpha p(x) \right)^{q-1}   F^{(1)}(0)  \partial^{(2)}_{1}G(x,t_0) \\
		&\quad + 6  \alpha F^{(3)}(0)  p(x)  \partial_1^{(1)} G(x,t_0)\partial_1^{(2)} G(x,t_0)  \\
		& \quad +  F^{(1)}(0)  \partial_1^{(4)} G(x,t_0) +  F^{(3)}(0) \partial_1 \left( \partial_1^{(1)} G(x,t_0) \right)^3.
	\end{split}
\]
Finally, using \eqref{G_s} and the fact that $ \partial_1^{(2)} G(x,t_0)=2 \partial_1^{(1)} p(x) (2\alpha) $ and $ \partial_1^{(4)} G(x,t_0)= 4 \partial_1^{(3)} p(x) (2\alpha) - 4 \partial_1^{(1)} p(x) (2\alpha)^3$, we conclude. This proves \eqref{eq_simplificada_2}.
\end{proof}

\section{Additional propagation identities}\label{adicional}

The previous Section was useful to establish some fundamental propagation properties held by the ZK flow under the Bell's algebra. These results are not enough to conclude our main results. In this Section, we expand further this theory. Recall that from Corollary \ref{F se anula}, one has $F^{(1)}(0)\neq 0$.

\begin{lemma}[Integrated propagation, $q$ odd]\label{EcuLapl2}
	Let $v$ of the form \eqref{def_v} be a smooth solution of \eqref{A2} with $q>1$ odd. 
	Let $t_{1}\in\mathbb{R}$ be any time such that $s(x_{1},t_{1})=0$ and $\partial_{t} s(x_{1},t_{1})=2\alpha m$. 
	Then it holds true that:
\begin{equation}\label{integrada}
	\begin{split}
		&2\alpha m \int_{-\infty}^{x_1}\left(\partial_{t}\Big( \partial_t v+\partial_{x_1}(\Delta v) +2(\partial_{x_1}v)^q \Big) \Big|_{(s_1, x_2,\ldots, x_N,t_{1})} \right)  ds_{1}	\\
		& \quad = F^{(1)}(0) \left( \Delta p(x)-12\alpha^{2} p(x)  +p(x)^{3} \left( 12\alpha^{2} \frac{F^{(3)}(0)}{F^{(1)}(0)}\right)+p(x)^{q} 2^{q}\alpha^{q-1} \left(F^{(1)}(0)\right)^{q-1}  \right).
	\end{split}
\end{equation}
\end{lemma}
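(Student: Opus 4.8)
The plan rests on the observation that, since $v$ solves \eqref{A2}, the quantity $\mathcal E:=\partial_t v+\partial_{x_1}(\Delta v)+2(\partial_{x_1}v)^q$ vanishes identically; hence so does $\partial_t\mathcal E$, and the left-hand side of \eqref{integrada} is trivially zero. The substance of the lemma is therefore to produce, from the quasimonochromatic ansatz together with the rigidity already established for $F$, the explicit closed form of $2\alpha m\int_{-\infty}^{x_1}(\partial_t\mathcal E)|_{(\cdot,t_1)}\,ds_1$, which will turn out to be an exact $x_1$-primitive of the advertised elliptic expression. Concretely, I would first apply Proposition \ref{irradiacion} with $n=2$ (so $\partial_t^{2n-3}=\partial_t$ and $2n=4$) — equivalently, differentiate identity \eqref{eq_simplificada} of Lemma \ref{eq_simplificada_0} once in $t$ — to write $(\partial_t\mathcal E)|_{(x,t_0)}$, for any null time $t_0$ (that is, $s(x_1,t_0)=0$), as an explicit polynomial in $\partial_{1}s(x_1,t_0)$ whose coefficients are the $t_0$-independent functions $\zeta_{\mathbf 1,\ell}(x)$, $\zeta_{\mathbf 2,\ell}(x)$ recorded in \eqref{dtne1}--\eqref{dtne4} and \eqref{defzeta}.

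Next I would invoke Corollary \ref{F se anula}: since $q$ is odd and the solution is nontrivial, $F^{(2)}(0)=F^{(4)}(0)=0$ and $F^{(1)}(0)\neq 0$, so every monomial above carrying an even-order derivative of $F$ at $0$ disappears, leaving only terms in $F^{(1)}(0)$, $F^{(3)}(0)$ and $(F^{(1)}(0))^q$. I would then specialize the time to $t_1$, characterized by $s(x_1,t_1)=0$ and $\partial_t s(x_1,t_1)=2\alpha m$, i.e. $\cos(2\alpha(x_1+mt_1))=1$, so that $\partial_{1}s(x_1,t_1)=2\alpha$ while the relevant higher $x_1$- and $t$-derivatives of $s$ at $t_1$ are the explicit constants furnished by Lemmas \ref{product_law0}, \ref{PEB} and \ref{ADhBts}. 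Substituting these values into the $\zeta$'s — in particular \eqref{zeta2n-1}--\eqref{zeta2n} with $n=2$ — and collecting, one should find, after the monomials not of the advertised shape cancel (chiefly the $p(x)^4\partial_{1}p(x)$ contributions of $\zeta_{\mathbf 1,3}$, which must cancel against those produced by $\zeta_{\mathbf 1,1}$ and $\zeta_{\mathbf 2,\ell}$),
\[
2\alpha m\,(\partial_t\mathcal E)\big|_{(x,t_1)}=\partial_{x_1}\Big[\,F^{(1)}(0)\Big(\Delta p(x)-12\alpha^2 p(x)+12\alpha^2\tfrac{F^{(3)}(0)}{F^{(1)}(0)}p(x)^3+2^{q}\alpha^{q-1}(F^{(1)}(0))^{q-1}p(x)^q\Big)\Big].
\]
Integrating this identity over $s_1\in(-\infty,x_1)$ at fixed $(x_2,\ldots,x_N)$ and discarding the boundary term as $s_1\to-\infty$ — legitimate because every summand in the bracket contains a factor $p$ or a second derivative of $p$, and $p,\nabla p,D^2 p\to 0$ at infinity by Definition \ref{mono} — yields exactly \eqref{integrada}.

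The step I expect to be the main obstacle is the bookkeeping in this specialization: one must carry the explicit constants $A^{0}_{n,k}$, $A^{h}_{n,k}$ (Lemmas \ref{PEB}, \ref{ADhBts}) and $C_{n,k}$ (Lemma \ref{lemma_B_DG}) through the definitions of $\zeta_{\mathbf 1,\ell}$, $\zeta_{\mathbf 2,\ell}$, verify that after putting $\partial_{1}s=2\alpha$ all the terms that fail to be a total $x_1$-derivative of one of the four advertised monomials cancel, and check that the surviving combination carries precisely the coefficients $-12\alpha^2$, $12\alpha^2$, $2^{q}\alpha^{q-1}$. A convenient internal consistency check is that for $N=1$ the identity \eqref{integrada} coincides with the $x_1$-primitive of \eqref{eq_simplificada_2}, once the free parameter $m$ is eliminated by means of \eqref{sec1}.
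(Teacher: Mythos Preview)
Your overall strategy coincides with the paper's: compute $(\partial_t\mathcal E)\big|_{(x,t_1)}$, recognize it as an exact $x_1$-derivative, and integrate using the decay of $p$. Where you differ is in routing the computation through Proposition~\ref{irradiacion} with $n=2$ and the abstract coefficients $\zeta_{\mathbf 1,\ell},\zeta_{\mathbf 2,\ell}$; the paper instead evaluates each of the four pieces $\partial_t^2 v$, $\partial_t\partial_1\Delta_c v$, $\partial_t\partial_1^3 v$ and $2\partial_t(\partial_1 v)^q$ directly at $t_1$, using $F^{(2)}(0)=F^{(4)}(0)=0$ at each step, and sums them to obtain the intermediate identity \eqref{eqvt}. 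This direct route is shorter: the feared $p^4\partial_1 p$ contributions never materialize, because in the direct expansion every such monomial carries a factor $F^{(4)}(0)$ and hence vanishes outright rather than through a cancellation between different $\zeta$'s. Going through Proposition~\ref{irradiacion} is not wrong, but that proposition is built to isolate the \emph{leading} coefficients $\zeta_{\mathbf 1,2n-1},\zeta_{\mathbf 1,2n}$ for the induction in Corollary~\ref{F se anula}; to extract the full closed form you would end up recomputing all of $\xi^{\mathbf 1}_\ell,\xi^{\mathbf 2}_\ell,\xi^{\mathbf 3}_\ell$ explicitly, which is precisely the direct calculation the paper performs. Your consistency check against \eqref{eq_simplificada_2} in the $N=1$ case is sound and is in fact the content of \eqref{vqt}--\eqref{vqx}.
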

\begin{proof}
We will begin by calculating the term $\partial_{t}\partial_{1}\partial^{2}_{j} (v)$ for $j\neq 1$:
\begin{equation*}
	\begin{split}
		\partial_{t}\partial_{1}\partial^{2}_{j} (v)(x,t)& =\partial^{2}_{j}\partial_{1}\partial_{t} (F\circ G)(x,t)\\
		&=\partial^{2}_{j}\partial_{1} \left( F^{(1)}(G(x,t))p(x) \partial_{t}s(x,t)  \right)\\			
		&=\partial^{2}_{j} \left( \partial_{1}(F^{(1)}(G(x,t)))p(x) \partial_{t}s(x,t)   \right) \\
		& \quad +\partial^{2}_{j} \left( F^{(1)}(G(x,t))\partial_{1}\left(p(x) \partial_{t}s(x,t) \right) \right) =:A_1+A_2.
	\end{split}
\end{equation*}
Next, we evaluate these two terms at time $t_{1}$, and using that $s(x_{1},t_{1})=0$ and $\partial_{t} s(x_{1},t_{1})=2\alpha m$,
\begin{equation*}
	\begin{split}
		A_1\Big|_{(x,t_{1})}& =\partial^{2}_{j} \left( \partial_{1}(F^{(1)}(G(x,t)))p(x)\partial_{t}s(x,t)   \right)\Big|_{(x,t_{1})}\\
		&= \left(F^{(2)}(G(x,t))\partial_{1}G(x,t) \right) \partial^{2}_{j} \left(p(x)\right)\partial_{t}s(x,t)\Big|_{(x,t_{1})}\\   				
		&\quad + 2\partial^{1}_{j} \left(F^{(2)}(G(x,t))\partial_{1}G(x,t) \right)\partial_{j} (p(x))\partial_{t}s(x,t)\Big|_{(x,t_{1})}   \\
		&\quad +\partial^{2}_{j} \left(F^{(2)}(G(x,t))\partial_{1}G(x,t)\right)p(x)\partial_{t}s(x,t)\Big|_{(x,t_{1})}   \\
		&= \left(F^{(2)}(0)p(x)(2\alpha) \right) \partial^{2}_{j} \left(p(x)\right)(2\alpha m)\\   				
		&\quad + 2\left(F^{(3)}(0)(0)(2\alpha) +F^{(2)}(0)(\partial_{j}p(x)(2\alpha))\right)\partial_{j} (p(x))(2\alpha m)   \\
		&\quad +\left(F^{(4)}(0)(0)+ F^{(3)}(0)(0)+F^{(3)}(0)(0) \right)p(x) (2\alpha m)  =0.
	\end{split}
\end{equation*}
Similarly, we have that:
\begin{equation*}
	\begin{split}
A_2\Big|_{(x,t_{1})}&=\partial^{2}_{j} \Big( F^{(1)}(G(x,t))\partial_{1}\Big(p(x) \partial_{t}s(x_{1},t) \Big) \Big) \Big|_{(x,t_{1})}\\
& =\partial^{2}_{j} \left( F^{(1)}(G(x,t))\Big(\partial_{1}p(x) \partial_{t}s(x_{1},t)+p(x) \partial_{1}\partial_{t}s(x_{1},t) \Big) \right) \Big|_{(x,t_{1})}\\
& =\partial^{2}_{j} \left( F^{(1)}(G(x,t))\right)\Big(\partial_{1}p(x) \partial_{t}s(x_{1},t)+p(x) \partial_{1}\partial_{t}s(x_{1},t)  \Big)\Big|_{(x,t_{1})} \\
&\quad + 2\partial_{j} \left( F^{(1)}(G(x,t))\right)\Big( \partial_{j}(\partial_{1}p(x)) \partial_{t}s(x_{1},t)+\partial_{j}(p(x)) \partial_{1}\partial_{t}s(x_{1},t)  \Big)\Big|_{(x,t_{1})} \\
&\quad + F^{(1)}(G(x,t))\Big(\partial^{2}_{j}(\partial_{1}p(x)) \partial_{t}s(x_{1},t)+\partial^{2}_{j}(p(x)) \partial_{1}\partial_{t}s(x_{1},t) \Big)\Big|_{(x,t_{1})}  \\
&  = 2\alpha m F^{(1)}(0)\partial^{2}_{j}(\partial_{1}p(x)) .
	\end{split}
\end{equation*}
Hence, adding the terms $A_1\big|_{(x,t_{1})}$ and $A_2\big|_{(x,t_{1})}$ and summing on $j$,
\begin{equation*}
(\partial_{t}\partial_{1}\Delta_{c}(v(x,t)))\Big|_{(x,t_{1})}  =2\alpha m F^{(1)}(0) \Delta_{c}(\partial_{1}p(x)) .
\end{equation*}
We continue with the expansion of $\partial_{t}\partial^{3}_{1}v$:
\begin{equation*}
	\begin{split}
		&\partial_{t}\partial^{3}_{1}v(x,t)\Big|_{(x,t_{1})}\\
		&\quad =\partial_{t} (\partial^{3}_{1}(F\circ G))\Big|_{(x,t_{1})}\\
		&\quad =\partial_{t} \left(\sum_{\ell=1}^{3}F^{(\ell)}(G(x,t))B_{3,\ell}\left(\bf\partial^{(v)}_{1}G(x,t):v \right)  \right)\Big|_{(x,t_{1})} \\
	&\quad = \sum_{\ell=1}^{3}F^{(\ell)}(G(x,t))\partial_{t}\left(B_{3,\ell}\left(\bf\partial^{(v)}_{1}G(x,t):v \right) \right)\Big|_{(x,t_{1})} \\
	& \qquad +\sum_{\ell=1}^{3}F^{(\ell+1)}(G(x,t))\partial_{t}G(x,t)B_{3,\ell}\left(\bf\partial^{(v)}_{1}G(x,t):v \right)\Big|_{(x,t_{1})}.
	\end{split}
\end{equation*}
Replacing,
\begin{equation*}
	\begin{split}
			&\partial_{t}\partial^{3}_{1}v(x,t)\Big|_{(x,t_{1})}\\
		&\quad = F^{(1)}(0)\left( \partial^{3}_{1}p(x)(2m\alpha)-3\partial_{1}p(x)(2m\alpha)(2\alpha)^{2}\right)\\
		&\qquad+F^{(2)}(0)\left( 24(\partial_{1}p(x))^{2}m\alpha^{2}+12p(x)\partial_{1}^{2}p(x)m\alpha^{2}-48p(x)^{2}m\alpha^{4}\right) \\
		&\qquad+F^{(3)}(0)\left(3(p(x)^{2}(2\alpha)^{2})(\partial_{1}p(x)(2m\alpha))  \right) \\
		&\qquad+F^{(3)}(0)(p(x)(2\alpha m))(24p(x)\partial_{1} p(x))\\
		&\quad = F^{(1)}(0)\left( \partial^{3}_{1}p(x)(2m\alpha)-3\partial_{1}p(x)(2m\alpha)(2\alpha)^{2}\right)\\
		&\qquad+F^{(3)}(0)\left(3(p(x)^{2}(2\alpha)^{2})(\partial_{1}p(x)(2m\alpha))  \right) +F^{(3)}(0)(p(x)(2\alpha m))(24p(x)\partial_{1} p(x)).
	\end{split}
\end{equation*}
Now, we deal with the nonlinear term:
\begin{equation*}
	\begin{split}
		2&\partial_{t}	(\partial_{1}v(x,t)^{q})\Big|_{(x,t_{1})}\\
		&=2 \partial_{t}(\partial_{1}(F\circ G) (x,t)^{q})\Big|_{(x,t_{1})}\\
		&=2\partial_{t}\left( F^{(1)}(G(x,t))^{q}(\partial_{1}G(x,t))^{q}\right)\Big|_{(x,t_{1})}\\
		&=2\partial_{t}\left( F^{(1)}(G(x,t))^{q}\right)(\partial_{1}G(x,t))^{q}\Big|_{(x,t_{1})}		+2 F^{(1)}(G(x,t))^{q}\partial_{t}\left((\partial_{1}G(x,t))^{q}\right)\Big|_{(x,t_{1})}\\
		&=2\left(q F^{(1)}(G(x,t))^{q-1}\partial_{t}\left(F^{(1)}(G(x,t) \right) \right)(\partial_{1}G(x,t))^{q}\Big|_{(x,t_{1})}		\\
		&\quad +2 F^{(1)}(G(x,t))^{q}\left(q(\partial^{(1)}_{1}G(x,t))^{q-1}\partial_{t}\partial_{1}G(x,t)\right)\Big|_{(x,t_{1})}\\
		&=2F^{(1)}(0)^{q}qp(x)^{q-1}(2\alpha)^{-1}\partial_{1}p(x)(2\alpha m).
	\end{split}
\end{equation*}
Finally,
\begin{equation*}
	\begin{split}
		\partial_{t}	(\partial_{t}v(x,t))\Big|_{(x,t_{1})} &=	\partial^{2}_{t}v(x,t)\Big|_{(x,t_{1})}\\
		&=\sum_{\ell=1}^{2}F^{(\ell)}(G(x,t))p(x)^{\ell}B_{2,\ell}(\bf{ \partial_{t}s(x_{1},t):(v)})\\
		&=F^{(1)}(0)p(x)\partial_{t}^{2}s(x_{1},t)\Big|_{(x,t_{1})}+F^{(2)}(0)p(x)^{2}(\partial_{t}s(x_{1},t))^{2}\Big|_{(x,t_{1})}=0.
	\end{split}
\end{equation*}
Putting together all the information developed above, we get that:
\begin{equation}\label{eqvt}
	\begin{split}
		&\partial_{t}\left( \partial_t v+\partial_{x_1}(\Delta v) +2(\partial_{x_1}v)^3 \right) \Big|_{(x,t_{1})} \\
		&\quad  =2\alpha mF^{(1)}(0)\partial_{1}(\Delta_{c}p(x))   + 2m\alpha F^{(1)}(0) \partial^{3}_{1}p(x)\\
		&\qquad -24m\alpha^{3}F^{(1)}(0) \partial_{1}p(x)+72m\alpha^{3}F^{(3)}(0)p(x)^{2}\partial_{1}p(x) \\
		& \qquad +2^{q+1}q\alpha^{q}m \left( F^{(1)}(0) \right)^{q}p(x)^{q-1}\partial_{1}p(x)  \\
		&\quad = 2\alpha mF^{(1)}(0)\partial_{1}(\Delta p(x))   -24m\alpha^{3}F^{(1)}(0) \partial_{1}p(x) \\
		& \qquad +72m\alpha^{3}F^{(3)}(0)p(x)^{2}\partial_{1}p(x)+2^{q+1}q\alpha^{q}m F^{(1)}(0)^{q}p(x)^{q-1}\partial_{1}p(x).
	\end{split}
\end{equation}
Note that the last expression represents a total derivative in the $x_1$ variable. Integrating the previous equation over the variable $x_{1}$ and using the decay of $p(x)$ in the variable $x_{1}$, we obtain:
\begin{equation*}
	\begin{split}
		&\int_{-\infty}^{x_1}\partial_{t}\left(\partial_t v+\partial_{x_1}(\Delta v) +2(\partial_{x_1}v)^3 \right) \Big|_{(s_1,x_2,\ldots,x_N,t_{1})} ds_{1}\\
		& \quad = 2\alpha mF^{(1)}(0)\Delta p(x)   -24m\alpha^{3}F^{(1)}(0) p(x)+24m\alpha^{3}F^{(3)}(0)p(x)^{3}\\
		&\qquad +2^{q+1}\alpha^{q}m F^{(1)}(0)^{q}p(x)^{q}.
	\end{split}
\end{equation*}
This completes the proof of \eqref{integrada}.	
\end{proof}

\begin{lemma}[Identity for the gradient squared]\label{ecugra1}
Let $v$ of the form \eqref{def_v} be a smooth solution of \eqref{A2} with $q$ odd.
Let $t_{1}\in\mathbb{R}$ be any time such that $s(x_{1},t_{1})=0$ and $\partial_{t} s(x_{1},t_{1})=2\alpha m$. Then it holds true that
\begin{equation*}
	\begin{split}
		& \partial_{t}^{2}	\Big(\partial_{t}v(x,t)+ \partial_{x_1}(\Delta v(x,t))\Big)  \Big|_{(x,t_{1})}\\
		&\quad =48p(x)F^{(3)}(0)m^{2}\alpha^{3}\left( |\nabla p(x)|^{2}+2\left(\partial_{1}p(x)\right)^{2} \right) \\
		&\qquad -16p(x)^{3}F^{(3)}(0) m^{3}\alpha^{3}+2^{q+3}p(x)^{q} \left( F^{(1)}(0) \right)^{q}m^{2}\alpha^{q+2} \\
		&\qquad  -3\cdot2^{q+3}p(x)^{q+2}F^{(3)}(0) \left( F^{(1)}(0) \right)^{q-1}m^{2}\alpha^{q+2}-96p(x)^{5}m^{2}\alpha^{5}\frac{ \left( F^{(3)}(0) \right)^{2}}{F^{(1)}(0)}\\
		&\qquad -192p(x)^{3}F^{(3)}(0)m^{2}\alpha^{5}+F^{(5)}(0)32m^{2}\alpha^{5}p(x)^{5}.
	\end{split}
\end{equation*} 
\end{lemma}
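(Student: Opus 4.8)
\smallskip
\noindent\textbf{Proof proposal.} The plan is to mimic the proof of Lemma~\ref{EcuLapl2}, carrying one extra time derivative. Writing $\Delta=\partial_{1}^{(2)}+\Delta_{c}$ with $\Delta_{c}=\sum_{j=2}^{N}\partial_{j}^{(2)}$, the quantity to be evaluated splits as
\[
\partial_{t}^{2}\big(\partial_{t}v+\partial_{x_{1}}(\Delta v)\big)=\partial_{t}^{3}v+\partial_{t}^{2}\partial_{1}^{(3)}v+\partial_{t}^{2}\partial_{1}\Delta_{c}v .
\]
First I would expand each of the three pieces using the Fa\`a di Bruno formula of Lemma~\ref{Bell_lema1} together with the Leibniz/product rules of Section~\ref{sec:algebra}, and then evaluate at a time $t_{1}$ with $s(x_{1},t_{1})=0$ and $\partial_{t}s(x_{1},t_{1})=2\alpha m$. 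At such a time $G(x,t_{1})=0$, $\partial_{1}s=2\alpha$, $\partial_{t}s=2\alpha m$, $\partial_{t}^{(2)}s=0$, $\partial_{t}^{(3)}s=-(2\alpha m)^{3}$, all even $x_{1}$-derivatives of $s$ vanish and the odd ones are explicit; this is exactly the bookkeeping already packaged in Lemmas~\ref{product_law0}, \ref{PEB}, \ref{ADhBts}, \ref{lemma_B_DG} and \ref{ADf}. Since $q$ is odd, Corollary~\ref{F se anula} gives $F^{(2)}(0)=F^{(4)}(0)=0$ and $F^{(1)}(0)\neq0$, which annihilates a large number of otherwise surviving contributions.

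The result of this evaluation is an identity expressing $\partial_{t}^{2}(\partial_{t}v+\partial_{x_{1}}(\Delta v))\big|_{(x,t_{1})}$ as $m^{2}$ times a polynomial in $p(x)$ and its spatial derivatives, with coefficients built from $\alpha$, $F^{(1)}(0)$, $F^{(3)}(0)$ and $F^{(5)}(0)$ — the fifth derivative appearing because $\partial_{t}^{2}\partial_{1}^{(3)}$ has total order five, so the top Bell term of $\partial_{1}^{(3)}v$, which carries $F^{(3)}$, is hit twice by $\partial_{t}$, producing (with coefficient $1$) $F^{(5)}(0)(\partial_{t}G)^{2}(\partial_{1}G)^{3}\big|_{t_{1}}=32\alpha^{5}m^{2}F^{(5)}(0)p^{5}$. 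Besides powers of $p$, the derivatives that appear are $\partial_{1}p$, $\partial_{1}^{(2)}p$, $\partial_{1}^{(3)}p$, $(\partial_{1}p)^{2}$, $|\nabla p|^{2}$, $\Delta_{c}p$, $\partial_{1}\Delta_{c}p$ and $p\,\Delta_{c}p$. To collapse this to the stated closed form I would substitute the structural identities already proved. The left side of \eqref{eq_simplificada_2} (Lemma~\ref{eq_simplificada_1}) is a total $x_{1}$-derivative, so integrating in $x_{1}$ and using the decay $p,\nabla p,D^{2}p\to0$ yields the pointwise ODE
\[
8\alpha F^{(1)}(0)\partial_{1}^{(2)}p+(2\alpha m-32\alpha^{3})F^{(1)}(0)p+32\alpha^{3}F^{(3)}(0)p^{3}+2^{q+2}\alpha^{q}\big(F^{(1)}(0)\big)^{q}p^{q}=0,
\]
which, dividing by $F^{(1)}(0)\neq0$ and differentiating once in $x_{1}$, expresses $\partial_{1}^{(2)}p$ and $\partial_{1}^{(3)}p$ as explicit polynomials in $p$ (the latter with an extra factor $\partial_{1}p$). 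Likewise, because $v$ solves \eqref{A2}, the integrated identity \eqref{integrada} of Lemma~\ref{EcuLapl2} gives $\Delta p$, hence $\Delta_{c}p=\Delta p-\partial_{1}^{(2)}p$ and, by differentiation, $\partial_{1}\Delta_{c}p$, again as polynomials in $p$ and $\partial_{1}p$.

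After these substitutions every second- and third-order derivative of $p$ disappears, leaving only $p$, $(\partial_{1}p)^{2}$ and $|\nabla p|^{2}$; regrouping the two quadratic-gradient contributions as $p\big(|\nabla p|^{2}+2(\partial_{1}p)^{2}\big)$ and simplifying the surviving powers of $p$ should produce precisely the claimed identity. The hard part will be purely computational: a fifth-order mixed derivative of a composition generates a great many terms, and one must be careful both about the multinomial coefficients and about exactly which products of $s$-derivatives survive the evaluation at $t_{1}$. No new idea is needed, however: the entire algebraic structure ensuring the collapse to the displayed formula is supplied by the lemmas of Sections~\ref{sec:algebra}--\ref{adicional}.
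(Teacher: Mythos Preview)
Your overall plan---split into $\partial_{t}^{3}v$, $\partial_{t}^{2}\partial_{1}^{(3)}v$ and $\partial_{t}^{2}\partial_{1}\Delta_{c}v$, expand via Fa\`a di Bruno/Bell, evaluate at $t_{1}$, and then substitute a structural identity to kill the second-order derivatives of $p$---is exactly the paper's route. Two concrete issues in your execution, however.

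First, the substitution you propose for $\partial_{1}^{(2)}p$ rests on Lemma~\ref{eq_simplificada_1}, which in the paper is stated and proved only under the hypothesis $N=1$. Since Lemma~\ref{ecugra1} is needed in every dimension (it feeds Proposition~\ref{ecugrac}, which is the core of the $N\ge2$ nonexistence argument of Section~\ref{Ngen}), you cannot invoke \eqref{eq_simplificada_2} here.

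Second, your inventory of surviving spatial derivatives is too pessimistic: neither $\partial_{1}^{(3)}p$ nor $\partial_{1}\Delta_{c}p$ actually appears after the evaluation at $t_{1}$. When the three pieces are assembled (this is the content of the paper's computations \eqref{graaux1} through \eqref{Egaux7}), the only second-order derivatives of $p$ enter in the single block $\Delta p+2\partial_{1}^{(2)}p$, occurring with the two prefactors $-8m^{2}\alpha^{3}F^{(1)}(0)$ and $24m^{2}\alpha^{3}F^{(3)}(0)p^{2}$. That block is substituted out in one stroke using \eqref{sec1} of Lemma~\ref{Corf1}, which holds for all $N$, rather than by separately eliminating $\partial_{1}^{(2)}p$ and $\Delta_{c}p$ via Lemmas~\ref{eq_simplificada_1} and~\ref{EcuLapl2}. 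After that single substitution the expression collapses directly to the stated formula, with no further differentiation needed.
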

\begin{proof}
This lemma is proved through a direct calculation. We will begin by calculating the term $\partial_{t}^{2}	\partial_{t}v(x,t)\Big|_{(x,t_{1})}$:
\begin{equation*}
	\begin{split}
		\partial_{t}^{2}	\partial_{t}v(x,t)&= \partial_{t}^{3}( F\circ G) (x,t)\\
		&=\sum_{\ell=1}^{3} F^{(\ell)} (G(x,t)) B_{3,\ell} \left({\bf \partial_{t}^{(v)}G(x,t):v} \right)\\
		&=\sum_{\ell=1}^{3} F^{(\ell)} (G(x,t))p(x)^{\ell} B_{3,\ell} \left({\bf \partial_{t}^{(v)}s(x_{1},t):v} \right).
	\end{split}
\end{equation*}
Hence
\begin{equation}\label{graaux1}
	\begin{split}
		& \left(\partial_{t}^{2}	\partial_{t}v(x,t) \right) \Big|_{(x,t_{1})}\\
		&\qquad =-F^{(1)} (0)p(x)(2m\alpha)^{3}+F^{(2)}(0)p(x)^{2} 3(0)(2m\alpha)^{2}+F^{(3)}(0)p(x)^{3} (2m\alpha)^{3}\\
		&\qquad =-8p(x)F^{(1)} (0)m^{3}\alpha^{3}+8p(x)^{3}F^{(3)}(0) m^{3}\alpha^{3}.
	\end{split}
\end{equation}
Now we will focus on the term $\partial_{t}^{2}\partial_{1}(\Delta v)$. We start with  $\partial_{t}^{2}\partial_{1}\partial^{2}_{j} (v)$ for $j\neq 1$:
\begin{equation*}
	\begin{split}
		& \partial_{t}^{2}\partial_{1}\partial^{2}_{j} (v)(x,t)\\
		& \quad =\partial^{2}_{j}\partial_{1}\partial_{t}^{2} (F\circ G)(x,t)\\
		&\quad =\partial^{2}_{j}\partial_{1} \left( \sum_{\ell=1}^{2}F^{(\ell)}(G(x,t))p(x)^{\ell}B_{2,\ell}\left(\bf \partial^{(v)}_{t}s(x_{1},t):v\right)   \right)\\			
		&\quad =\partial^{2}_{j} \left( \sum_{\ell=1}^{2}\partial_{1}(F^{(\ell)}(G(x,t)))p(x)^{\ell}B_{2,\ell}\left(\bf \partial^{(v)}_{t}s(x_{1},t):v\right)   \right)\\			
		&\qquad +\partial^{2}_{j} \left( \sum_{\ell=1}^{2}F^{(\ell)}(G(x,t))\partial_{1}\left( p(x)^{\ell}B_{2,\ell}\left(\bf \partial^{(v)}_{t}s(x_{1},t):v\right)   \right)\right) =: T_1+T_2.		
	\end{split}
\end{equation*}
Next, we expand these two terms. We start with $T_1$:
\begin{equation*}
	\begin{split}
		T_1&=\partial^{2}_{j} \left( \sum_{\ell=1}^{2}\partial_{1}(F^{(\ell)}(G(x,t)))p(x)^{\ell}B_{2,\ell}\left(\bf \partial^{(v)}_{t}s(x_{1},t):v\right)   \right)\\	
		&=\partial^{2}_{j} \left( \sum_{\ell=1}^{2}\left(F^{(\ell+1)}(G(x,t))(\partial_{1}G(x,t))  \right) p(x)^{\ell}B_{2,\ell}\left(\bf \partial^{(v)}_{t}s(x_{1},t):v\right)   \right)\\					
		&=\sum_{\ell=1}^{2}\left(F^{(\ell+1)}(G(x,t))(\partial_{1}G(x,t))  \right) \partial^{2}_{j}(p(x)^{\ell})B_{2,\ell}\left(\bf \partial^{(v)}_{t}s(x_{1},t):v\right)   \\	
		&\quad +\sum_{\ell=1}^{2}2\partial^{1}_{j}\left(F^{(\ell+1)}(G(x,t))(\partial_{1}G(x,t))  \right) \partial^{1}_{j}(p(x)^{\ell})B_{2,\ell}\left(\bf \partial^{(v)}_{t}s(x_{1},t):v\right)   \\	
		&\quad +\sum_{\ell=1}^{2}\partial^{2}_{j}\left(F^{(\ell+1)}(G(x,t))(\partial_{1}G(x,t))  \right) (p(x)^{\ell})B_{2,\ell}\left(\bf \partial^{(v)}_{t}s(x_{1},t):v\right).
	\end{split}
\end{equation*}
Recall \eqref{bells}. Evaluating this expression at time $t=t_{1}$, and since
\[
B_{2,1}\left(\bf \partial^{(v)}_{t}s(x_{1},t):v\right) \Big|_{(x,t_{1})}=0,
\]
and
\[
B_{2,2}\left(\bf \partial^{(v)}_{t}s(x_{1},t):v\right)  \Big|_{(x,t_{1})}=\left(2\alpha m\right)^{2},
\]
we have that
\begin{equation}\label{Egaux1}
	\begin{split}
		&T_1=\partial^{2}_{j} \left( \sum_{\ell=1}^{2}\partial_{1}(F^{(\ell)}(G(x,t)))p(x)^{\ell}B_{2,\ell}\left(\bf \partial^{(v)}_{t}s(x_{1},t):v\right)   \right)\Bigg|_{(x,t_{1})} \\	
		&\quad = F^{(3)}(0) p(x)(2\alpha) \partial^{2}_{j}(p(x)^{2})\left(2\alpha m\right)^{2}   \\	
		&\qquad  +2 F^{(3)}(0)\partial_{j}p(x)(2\alpha) (2p(x)\partial^{1}_{j}p(x))\left(2\alpha m\right)^{2}  +F^{(3)}(0)\partial^{2}_{j}p(x)(2\alpha) (p(x)^{2})\left(2\alpha m\right)^{2}   \\
		&\quad =F^{(3)}(0)8m^{2}\alpha^{3}p(x)(2\partial_{j}p(x)^{2}+2p(x)\partial_{j}^{2}p(x))   \\	
		&\qquad+F^{(3)}(0)32m^{2}p(x)\alpha^{3}\left(\partial_{j}p(x)  \right)^{2}   +F^{(3)}(0)8m^{2}\alpha^{3}p(x)^{2}\partial^{2}_{j}p(x)   \\
		&\quad =p(x)F^{(3)}(0)48m^{2}\alpha^{3}\left(\partial_{j}p(x)  \right)^{2}+p(x)^{2}F^{(3)}(0)24m^{2}\alpha^{3}\partial^{2}_{j}p(x) .
	\end{split}
\end{equation}
Next, we compute the expansion of the second term $T_2$. Thanks to Corollary \ref{F se anula} and the fact that $q$ is odd, $F^{(2)}(0)=0$ and
\[
	\begin{split}
		T_2& =\partial^{2}_{j} \left( \sum_{\ell=1}^{2}F^{(\ell)}(G(x,t))\partial_{1}\left( p(x)^{\ell}B_{2,\ell}\left(\bf \partial^{(v)}_{t}s(x_{1},t):v\right)   \right)\right)\Bigg|_{(x,t_{0})} \\
		&= \sum_{\ell=1}^{2}F^{(\ell)}(0)\cdot\partial^{2}_{j}\left( \partial_{1}\left( p(x)^{\ell}B_{2,\ell}\left(\bf \partial^{(v)}_{t}s(x_{1},t):v\right)   \right)\right)\Bigg|_{(x,t_{0})} \\	
		&= \sum_{\ell=1}^{2}F^{(\ell)}(0)\cdot\left(\partial^{2}_{j}\partial_{1}\left( p(x)^{\ell}\right)B_{2,\ell}\left(\bf \partial^{(v)}_{t}s(x_{1},t):v\right) + \partial^{2}_{j}(p(x)^{\ell})\partial_{1}B_{2,\ell}\left(\bf \partial^{(v)}_{t}s(x_{1},t):v\right)   \right) \Bigg|_{(x,t_{0})} \\	
		&= F^{(1)}(0)\cdot\left(\left(\partial^{2}_{j}\partial_{1} p(x)\right)B_{2,1}\left(\bf \partial^{(v)}_{t}s(x_{1},t):v\right) + \partial^{2}_{j}(p(x))\partial_{1}B_{2,1}\left(\bf \partial^{(v)}_{t}s(x_{1},t):v\right)   \right) \Bigg|_{(x,t_{0})}.
\end{split}
\]
Using now Lemmas \ref{PEB} and \ref{ADhBts} with $(n,k)=(2,1)$,
\begin{equation}\label{Egaux2}
	\begin{split}
		T_2 &= F^{(1)}(0) \Big(\partial^{2}_{j}\partial_{1} p(x)  \partial^{2}_{t}s(x_{1},t) + \partial^{2}_{j} p(x)\partial_{1} \partial^{2}_{t}s(x_{1},t)   \Big) \Bigg|_{(x,t_{0})} \\
		&=-m^{2}(2\alpha)^{3}  F^{(1)}(0) \partial^{2}_{j}(p(x))= -8m^{2}\alpha^{3}F^{(1)}(0) \partial^{2}_{j}(p(x)).							
	\end{split}
\end{equation}
Combining equations \eqref{Egaux1} and \eqref{Egaux2}, we obtain:
\begin{equation}\label{Egaux6}
	\begin{split}
		& \left( \partial_{t}^{2}\partial_{x_1}(\Delta_{c} v(x,t))\right)  \Big|_{(x,t_{1})}\\
		&\quad = -8m^{2}\alpha^{3}F^{(1)}(0) \Delta_{c}(p(x))+48p(x)F^{(3)}(0)m^{2}\alpha^{3}\sum_{j=2}^{N}\left(\partial_{j}p(x)  \right)^{2}   \\	
		&\qquad +24p(x)^{2}F^{(3)}(0)m^{2}\alpha^{3}\Delta_{c}(p(x)).
	\end{split}
\end{equation} 
We will now calculate the term $\partial_{t}^{2}\partial^{3}_{1}v(x,t)$:
\begin{equation*}
	\begin{split}
\partial_{t}^{2}\partial^{3}_{1}v(x,t) &= \partial_{t}^{2}\partial^{3}_{1}(F\circ G))\\
		& =\partial_{t}^{2} \left( \sum_{\ell=1}^{3}F^{(\ell)}(G(x,t))B_{3,\ell}\left(\bf \partial^{(v)}_{1}G(x,t):v\right)   \right)\\
		&=  \sum_{\ell=1}^{3}\partial_{t}^{2}(F^{(\ell)}(G(x,t))) B_{3,\ell}\left(\bf \partial^{(v)}_{1}G(x,t):v\right)   \\
		&\quad +  \sum_{\ell=1}^{3}2\partial_{t}(F^{(\ell)}(G(x,t))) \partial_{t} \left( B_{3,\ell}\left(\bf \partial^{(v)}_{1}G(x,t):v\right) \right)   \\
		&\quad +  \sum_{\ell=1}^{3}F^{(\ell)}(G(x,t)) \partial^{2}_{t} \left( B_{3,\ell}\left(\bf \partial^{(v)}_{1}G(x,t):v\right) \right).
	\end{split}
\end{equation*}
In what follows, we will evaluate each of the previous terms at $t_{1}$. From \eqref{bells}, \eqref{vector_de_derivadas}, \eqref{G_s} and the fact that $s(x_1,t_1)=0$, we obtain
\begin{eqnarray*}
	B_{3,1}\left(\bf \partial^{(v)}_{1}G(x,t):v\right)\Big|_{(x,t_{1})}  &=& \left( \partial^{3}_{1}G(x,t)\right) \Big|_{(x, t_{1})} \\
	&=& 3\partial^{2}_{1}p(x)(2\alpha)-p(x)(2\alpha)^{3}, 
\end{eqnarray*}
\begin{eqnarray*}
	B_{3,2}\left(\bf \partial^{(v)}_{1}G(x,t):v\right)\Big|_{(x,t_{1})}  &=& 3\left( \partial^{1}_{1}G(x,t)\partial^{2}_{1}G(x,t)\right) \Big|_{(x, t_{1})} \\
	&=& 3\left( p(x)(2\alpha)\right) \left( 4\alpha \partial_{1}p(x) \right),
\end{eqnarray*}
and
\begin{eqnarray*}
	B_{3,3}\left(\bf \partial^{(v)}_{1}G(x,t):v\right)\Big|_{(x,t_{1})}  &=& \left( \partial^{1}_{1}G(x,t)\right)^{3} \Big|_{(x,t_{1})} =p(x)^{3}(2\alpha)^{3}.
\end{eqnarray*}
Consequently,
\begin{equation}\label{Egaux3}
	\begin{split}
		& \left(\sum_{\ell=1}^{3}\partial_{t}^{2}(F^{(\ell)}(G(x,t))) B_{3,\ell}\left(\bf \partial^{(v)}_{1}G(x,t):v\right) \right)   \Bigg|_{(x,t_{1})}   \\
		& \quad = \Bigg(\sum_{\ell=1}^{3}\Bigg( F^{(\ell+2)}(G(x,t)) \big( p(x)\partial_{t}s(x_{1},t) \big)^{2}+ F^{(\ell+1)}(G(x,t)) \left( p(x)\partial^{2}_{t}s(x_{1},t) \right)\Bigg)  \\
		& \hspace{2.5cm} B_{3,\ell}\left(\bf \partial^{(v)}_{1}G(x,t):v\right) \Bigg)   \Bigg|_{(x,t_{1})}   \\
		&\quad =\left( F^{(3)}(0)(p(x)^{2}(2m\alpha)^{2})+0 \right)  \left(3\partial^{2}_{1}p(x)(2\alpha)-p(x)(2\alpha)^{3} \right)  \\
		& \qquad+ \left( F^{(4)}(0)(p(x)^{2}(2m\alpha)^{2})+ 0\right) 3\left( p(x)(2\alpha)\right) \left( 4 \alpha \partial_{1}p(x)\right)  \\
		&\qquad+ \left( F^{(5)}(0)(p(x)^{2}(2m\alpha)^{2})+ 0\right)  p(x)^{3}(2\alpha)^{3} \\
		&\quad =  8 m^{2}\alpha^{3} p(x)^{2}\left(  F^{(3)}(0) 3\partial^{2}_{1}p(x)- 4 F^{(3)}(0) \alpha^{2}p(x) \partial^{2}_{1}p(x) + 4 F^{(5)}(0) \alpha^{2}p(x)^{3} \right).
	\end{split}
\end{equation}
Since $q$ is odd, then $F^{(2)}(0)=0$, hence, a straightforward calculation shows that (see \eqref{bells}, \eqref{vector_de_derivadas} and \eqref{G_s})
\begin{equation*}
\begin{aligned}
	\partial_{t}B_{3,1}\left(\bf \partial^{(v)}_{1}G(x,t):v\right)\Big|_{(x,t_{1})}  = &~{} \left( \partial_{t}\partial^{3}_{1}G(x,t)\right) \Big|_{(x,t_{1})} \\
	=&~{}\partial^{3}_{1}p(x)(2m\alpha)-3\partial_{1}p(x)(2m\alpha)(2\alpha)^{2},
\end{aligned}
\end{equation*}
\begin{equation*}	
\begin{aligned}
	\partial_{t}B_{3,2}\left(\bf \partial^{(v)}_{1}G(x,t):v\right)\Big|_{(x,t_{1})}  =&~{} 3\partial_{t}\left( \partial^{1}_{1}G(x,t)\partial^{2}_{1}G(x,t)\right) \Big|_{(x,t_{1})}\\
	=&~{} 3\Big( \partial_{1}p(x)(2m\alpha)(2\partial_{1}p(x)(2\alpha)) \\
	&~{}+ p(x)(2\alpha) \left( \partial_{1}^{2}p(x)(2m\alpha)-p(x)m(2\alpha)^{3}\right)  \Big) \\
	=&~{} 24(\partial_{1}p(x))^{2}m\alpha^{2}+12p(x)\partial_{1}^{2}p(x)m\alpha^{2}-48p(x)^{2}m\alpha^{4},
\end{aligned}
\end{equation*}
and
\begin{equation*}
\begin{aligned}
	\partial_{t}B_{3,3}\left(\bf \partial^{(v)}_{1}G(x,t):v\right)\Big|_{(x,t_{1})}  = &~{} 3\left( \partial^{1}_{1}G(x,t)\right)^{2}\partial_{t}\partial^{1}_{1}G(x,t) \Big|_{(x,t_{1})} \\
	=&~{} 3(p(x)^{2}(2\alpha)^{2})(\partial_{1}p(x)(2m\alpha)).
\end{aligned}
\end{equation*}
Therefore
\begin{equation}\label{Egaux4}
	\begin{split}
		&\sum_{\ell=1}^{3}2\partial_{t}(F^{(\ell)}(G(x,t))) \partial_{t} \left( B_{3,\ell}\left(\bf \partial^{(v)}_{1}G(x,t):v\right) \right)   \\
		&\quad = \sum_{\ell=1}^{3}2\left(F^{(\ell+1)}(G(x,t))p(x)\partial_{t}s(x_{1},t) \right)  \partial_{t} \left( B_{3,\ell}\left(\bf \partial^{(v)}_{1}G(x,t):v\right) \right)   \\
		&\quad = 2\left(F^{(2)}(0)p(x)(2m\alpha) \right)  \left(\partial^{3}_{1}p(x)(2m\alpha)-3\partial_{1}p(x)(2m\alpha)(2\alpha)^{2} \right) \\
		&\qquad + 2\left(F^{(3)}(0)p(x)(2m\alpha) \right)\Big( 24(\partial_{1}p(x))^{2}m\alpha^{2}+12p(x)\partial_{1}^{2}p(x)m\alpha^{2}-48p(x)^{2}m\alpha^{4} \Big) \\
		&\qquad + 2\left(F^{(4)}(0)p(x)(2m\alpha) \right)  3(p(x)^{2}(2\alpha)^{2})(\partial_{1}p(x)(2m\alpha)) \\
		&\quad =  F^{(3)}(0)48p(x)m^{2}\alpha^{3} \left( 2(\partial_{1}p(x))^{2}+p(x)\partial_{1}^{2}p(x)-4p(x)^{2}\alpha^{2} \right)\\
		&\quad =  96F^{(3)}(0)p(x)\partial_{1}p(x)^{2}m^{2}\alpha^{3} +48F^{(3)}(0)p(x)^{2}\partial_{1}p(x)^{2}m^{2}\alpha^{3} \\
		& \qquad -192F^{(3)}(0)p(x)^{3}m^{2}\alpha^{5}.
	\end{split}
\end{equation}
Similarly
\begin{eqnarray*}
	\partial_{t}^{2}B_{3,1}\left(\bf \partial^{(v)}_{1}G(x,t):v\right)\Big|_{(x,t_{1})}  &=& \left( \partial_{t}^{2}\partial^{3}_{1}G(x,t)\right) \Big|_{(x,t_{1})} \\ & =& -3\partial^{2}_{1}p(x)(2m\alpha)^{2}(2\alpha)+p(x)(2\alpha)^{3}(2m\alpha)^{2},
\end{eqnarray*}
\begin{eqnarray*}
	\partial_{t}^{2}B_{3,2}\left(\bf \partial^{(v)}_{1}G(x,t):v\right)\Big|_{(x,t_{1})}  &=& 3\partial_{t}^{2}\left( \partial^{1}_{1}G(x,t)\partial^{2}_{1}G(x,t)\right) \Big|_{(x,t_{1})} \\
	& =& 6\partial_{1}p(x)\partial_{1}^{2}p(x) (2m\alpha)^{2}\\
	&& + 3\partial_{1}p(x) p(x)(2m\alpha)(-(2m\alpha)((2\alpha)^{2})) \\
	&& +6p(x)\partial_{1}p(x)(2(2\alpha)(-(2m\alpha)^{2}(2\alpha))),
\end{eqnarray*}
and
\begin{eqnarray*}
	\partial^{2}_{1}B_{3,3}\left(\bf \partial^{(v)}_{1}G(x,t):v\right)\Big|_{(x,t_{1})}  &=& \partial^{2}_{1}\left( (\partial^{1}_{1}G(x,t) )^{3} \right) \Big|_{(x,t_{1})} \\
	&=& 6p(x)(2\alpha)(\partial_{1}p(x))^{2}(2m\alpha)^{2}-3p(x)^{3}m^{2}(2\alpha)^{5}.
\end{eqnarray*}
Therefore,
\begin{equation}\label{Egaux5}
	\begin{split}
		& \left(\sum_{\ell=1}^{3}F^{(\ell)}(G(x,t)) \partial^{2}_{t} \left( B_{3,\ell}\left(\bf \partial^{(v)}_{1}G(x,t):v\right) \right) \right) \Bigg|_{(x,t_{1})}   \\
		& \quad = F^{(1)}(0) \left( -3\partial^{2}_{1}p(x)(2m\alpha)^{2}(2\alpha)+p(x)(2\alpha)^{3}(2m\alpha)^{2}\right)  \\
		& \qquad+ F^{(2)}(0) \partial^{2}_{t} \left( B_{3,2}\left(\bf \partial^{(v)}_{1}G(x,t):v\right) \right)  \Big|_{(x,t_{1})}  \\
		&\qquad + F^{(3)}(0)\Big( 6p(x)(2\alpha)(\partial_{1}p(x))^{2}(2m\alpha)^{2}-3p(x)^{3}m^{2}(2\alpha)^{5} \Big) \\
		&\quad = -24F^{(1)}(0)\partial^{2}_{1}p(x)m^{2}\alpha^{3} +32F^{(1)}(0)p(x)m^{2}\alpha^{5}  \\
		&\qquad + 48F^{(3)}(0)p(x)(\partial_{1}p(x))^{2}m^{2}\alpha^{3}-96F^{(3)}(0)p(x)^{3}m^{2}\alpha^{5}.
	\end{split}
\end{equation}
From equations \eqref{Egaux3}, \eqref{Egaux4} and \eqref{Egaux5}, we obtain:
\begin{equation}\label{Egaux7}
	\begin{split}
		\left( \partial_{t}^{2}\partial_{x_1}(\Delta_{c} v(x,t))\right)  \Big|_{(x,t_{1})}= & -8m^{2}\alpha^{3}F^{(1)}(0) \Delta_{c}(p(x))\\
		&  +48p(x)F^{(3)}(0)m^{2}\alpha^{3}\sum_{j=2}^{N}\left(\partial_{j}p(x)  \right)^{2}   \\	
		&+24p(x)^{2}F^{(3)}(0)m^{2}\alpha^{3}\Delta_{c}(p(x)) .
	\end{split}
\end{equation} 
From equations \eqref{Egaux6} and \eqref{Egaux7} we obtain:
\begin{equation*}
	\begin{split}
		&\left( \partial_{t}^{2}\partial_{x_1}(\Delta v(x,t))\right)  \Big|_{(x,t_{1})}\\
		&\quad =-8m^{2}\alpha^{3}F^{(1)}(0) \left(\Delta p(x)+2\partial^{2}_{1}p(x) \right) \\
		&\qquad +24p(x)^{2}F^{(3)}(0)m^{2}\alpha^{3}(\Delta p(x)+2\partial^{2}_{1}p(x)) \\
		&\qquad -320p(x)^{3}F^{(3)}(0)m^{2}\alpha^{5}+32F^{(1)}(0)p(x)m^{2}\alpha^{5}\\
		&\qquad +F^{(5)}(0)32m^{2}\alpha^{5}p(x)^{5}\\
		&\qquad +48p(x)F^{(3)}(0)m^{2}\alpha^{3}\left( \sum_{j=2}^{N}\left(\partial_{j}p(x)  \right)^{2}+2\left(\partial_{1}p(x)\right)^{2} \right).
	\end{split}
\end{equation*} 
Since $v$ satisfies \eqref{A2}, from Lemma \ref{Corf1} we have that
\begin{equation*}
	\begin{split}
		\Delta p(x)+ 2\partial_{1}^{(2)}p(x)&=p(x)\left(- m+4\alpha^{2} \right) \\
		& \quad  -p(x)^{3}4\alpha^{2}\left( \frac{F^{(3)}(0)}{F^{(1)}(0)} \right)-p(x)^{q} 2^{q}\alpha^{q-1}F^{(1)}(0)^{q-1}.
	\end{split}
\end{equation*}
Hence:
\begin{equation*}
	\begin{split}
		&\left( \partial_{t}^{2}\partial_{x_1}(\Delta v(x,t))\right)  \Big|_{(x,t_{1})}\\
		&\quad =-8m^{2}\alpha^{3}F^{(1)}(0) \left(p(x)\left( -m+4\alpha^{2}\right)-p(x)^{3}4\alpha^{2} \frac{F^{(3)}(0)}{F^{(1)}(0)} -p(x)^{q} 2^{q}\alpha^{q-1}F^{(1)}(0)^{q-1}\right) \\
		&\qquad +24p(x)^{2}F^{(3)}(0)m^{2}\alpha^{3}\left(p(x)\left( -m+4\alpha^{2}\right)-p(x)^{3}4\alpha^{2} \frac{F^{(3)}(0)}{F^{(1)}(0)}-p(x)^{q} 2^{q}\alpha^{q-1}F^{(1)}(0)^{q-1} \right) \\
		&\qquad -320p(x)^{3}F^{(3)}(0)m^{2}\alpha^{5}+32F^{(1)}(0)p(x)m^{2}\alpha^{5}\\
		&\qquad +F^{(5)}(0)32m^{2}\alpha^{5}p(x)^{5}\\
		&\qquad +48p(x)F^{(3)}(0)m^{2}\alpha^{3}\left( \sum_{j=2}^{N}\left(\partial_{j}p(x)  \right)^{2}+2\left(\partial_{1}p(x)\right)^{2} \right).
\end{split}
\end{equation*} 
Simplifying terms,
\begin{equation*}
	\begin{split}
	&\left( \partial_{t}^{2}\partial_{x_1}(\Delta v(x,t))\right)  \Big|_{(x,t_{1})}\\
		&\quad =8p(x)m^{3}\alpha^{3}F^{(1)}(0)-32p(x)F^{(1)}(0)m^{2}\alpha^{5}\\
		& \qquad +2^{q+3}p(x)^{q}F^{(1)}(0)^{q}m^{2}\alpha^{q+2}+32p(x)^{3}F^{(3)}(0)m^{2}\alpha^{5}  \\
		&\qquad -24p(x)^{3}F^{(3)}(0)m^{3}\alpha^{3}+96p(x)^{3}F^{(3)}(0)m^{2}\alpha^{5}\\
		&\qquad -3\cdot2^{q+3}p(x)^{q+2}F^{(3)}(0)F^{(1)}(0)^{q-1}m^{2}\alpha^{q+2}-96p(x)^{5}m^{2}\alpha^{5}\frac{F^{(3)}(0)^{2}}{F^{(1)}(0)}\\
		&\qquad -320p(x)^{3}F^{(3)}(0)m^{2}\alpha^{5}+32F^{(1)}(0)p(x)m^{2}\alpha^{5}\\
		&\qquad +F^{(5)}(0)32m^{2}\alpha^{5}p(x)^{5}\\
		&\qquad +48p(x)F^{(3)}(0)m^{2}\alpha^{3}\left( \sum_{j=2}^{N}\left(\partial_{j}p(x)  \right)^{2}+2\left(\partial_{1}p(x)\right)^{2} \right).
	\end{split}
\end{equation*}
Finally,
\begin{equation*}
	\begin{split}
	&\left( \partial_{t}^{2}\partial_{x_1}(\Delta v(x,t))\right)  \Big|_{(x,t_{1})}\\
		&\quad =8p(x)m^{3}\alpha^{3}F^{(1)}(0)+2^{q+3}p(x)^{q}F^{(1)}(0)^{q}m^{2}\alpha^{q+2} \\
		&\qquad -24p(x)^{3}F^{(3)}(0)m^{3}\alpha^{3}\\
		&\qquad -3\cdot2^{q+3}p(x)^{q+2}F^{(3)}(0)F^{(1)}(0)^{q-1}m^{2}\alpha^{q+2}-96p(x)^{5}m^{2}\alpha^{5}\frac{F^{(3)}(0)^{2}}{F^{(1)}(0)}\\
		&\qquad -192p(x)^{3}F^{(3)}(0)m^{2}\alpha^{5}\\
		&\qquad +F^{(5)}(0)32m^{2}\alpha^{5}p(x)^{5}\\
		&\qquad +48p(x)F^{(3)}(0)m^{2}\alpha^{3}\left( \sum_{j=2}^{N}\left(\partial_{j}p(x)  \right)^{2}+2\left(\partial_{1}p(x)\right)^{2} \right).
	\end{split}
\end{equation*} 
The proof of the proposition follows by adding the preceding equation to equation \eqref{graaux1}.
\end{proof}

\begin{lemma}\label{ecugra1nl}
Let $v$ of the form \eqref{def_v} be a smooth solution of \eqref{A2} with $q$ odd. If $t_{1}\in\mathbb{R}$ is any element such that $s(x_{1},t_{1})=0$,  $ \partial_t s(x_{1},t_{1})=2\alpha m$, and $ \partial_1 s(x_{1},t_{1})=2\alpha$, then one has
\begin{equation}\label{graaux22}
	\begin{split}
		& 2\partial_{t}^{2}	\big(\partial_{1}v(x,t)^{q} \big)\Big|_{(x,t_{1})}\\
		&\quad  =-2^{q+3}qp(x)^{q} \left(  F^{(1)}(0) \right)^{q}m^{2}\alpha^{q+2}+q(q-1)2^{q+1}(\partial_{1}p(x))^{2}(p(x)^{q-2}) \left( F^{(1)}(0)\right)^{q}m^{2}\alpha^{q}  \\
		&\qquad+2^{q+3}qF^{(3)}(0) \left( F^{(1)}(0) \right)^{q-1}p(x)^{q+2}m^{2}\alpha^{q+2}.
	\end{split}
\end{equation}
\end{lemma}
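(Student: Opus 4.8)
The statement is an evaluation identity at a single time $t_1$, so I will treat $q$ as an exterior power and reduce the whole computation to three elementary quantities. Write $w:=\partial_1 v=F^{(1)}(G)\,\partial_1 G$, where $G=p(x)s(x_1,t)$ as in \eqref{G_s}. By the ordinary power rule,
\[
\partial_t^2\big(w^q\big)=q\,w^{q-1}\,\partial_t^2 w+q(q-1)\,w^{q-2}\,(\partial_t w)^2,
\]
so it is enough to evaluate $w$, $\partial_t w$ and $\partial_t^2 w$ at $t_1$ and substitute.

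First I would record the scalar data at $t_1$. Since $s(x_1,t_1)=0$, $\partial_t s(x_1,t_1)=2\alpha m$ and $\partial_1 s(x_1,t_1)=2\alpha$, and $s(x_1,t)=\sin(2\alpha(x_1+mt))$, one gets $G(x,t_1)=0$, $\partial_t^2 s(x_1,t_1)=0$, $\partial_1\partial_t s(x_1,t_1)=0$ and $\partial_1\partial_t^2 s(x_1,t_1)=-(2\alpha m)^2(2\alpha)$; moreover $F^{(2)}(0)=0$ because $q$ is odd, by Corollary \ref{F se anula}. Differentiating $w=F^{(1)}(G)\,\partial_1 G$ in $t$ via the chain rule and evaluating at $t_1$, every term carrying a factor $F^{(2)}(G)$ evaluated at $G(x,t_1)=0$ drops out, and using $\partial_1 G|_{(x,t_1)}=2\alpha p$, $\partial_t G|_{(x,t_1)}=2\alpha m\,p$, $\partial_t\partial_1 G|_{(x,t_1)}=2\alpha m\,\partial_1 p$ and $\partial_t^2\partial_1 G|_{(x,t_1)}=-(2\alpha m)^2(2\alpha)\,p$, this leaves
\[
w\big|_{(x,t_1)}=2\alpha F^{(1)}(0)p(x),\qquad \partial_t w\big|_{(x,t_1)}=2\alpha m\,F^{(1)}(0)\,\partial_1 p(x),
\]
\[
\partial_t^2 w\big|_{(x,t_1)}=8\alpha^3 m^2\big(F^{(3)}(0)\,p(x)^3-F^{(1)}(0)\,p(x)\big).
\]

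Finally I would substitute these into the power-rule identity, expand $w^{q-1}=(2\alpha F^{(1)}(0))^{q-1}p^{q-1}$ and $w^{q-2}=(2\alpha F^{(1)}(0))^{q-2}p^{q-2}$, multiply by $2$, and collect the constants: the term $2q\,w^{q-1}\partial_t^2 w$ produces $-2^{q+3}q\,(F^{(1)}(0))^q p^q m^2\alpha^{q+2}+2^{q+3}q\,F^{(3)}(0)(F^{(1)}(0))^{q-1}p^{q+2}m^2\alpha^{q+2}$ (using $\alpha^{q-1}\alpha^3=\alpha^{q+2}$ and $2\cdot 2^{q-1}\cdot 8=2^{q+3}$), and the term $2q(q-1)w^{q-2}(\partial_t w)^2$ produces $2^{q+1}q(q-1)(\partial_1 p)^2 p^{q-2}(F^{(1)}(0))^q m^2\alpha^{q}$ (using $\alpha^{q-2}\alpha^2=\alpha^q$ and $2\cdot 2^{q-2}\cdot 4=2^{q+1}$). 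Adding these gives exactly \eqref{graaux22}. There is no conceptual obstacle: the only care needed is the bookkeeping of the powers of $2$ and of $\alpha$, together with the systematic use of $G(x,t_1)=0$ and $F^{(2)}(0)=0$ to discard all vanishing terms before substituting.
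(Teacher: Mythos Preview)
Your proof is correct and is in fact cleaner than the paper's. The paper does not use the identity $\partial_t^2(w^q)=q\,w^{q-1}\partial_t^2 w+q(q-1)\,w^{q-2}(\partial_t w)^2$ for $w=\partial_1 v$; instead it writes $(\partial_1 v)^q=\big(F^{(1)}(G)\big)^q\big(\partial_1 G\big)^q$ as a product of two $q$-th powers, applies the Leibniz rule $\sum_{\ell=0}^2\binom{2}{\ell}\partial_t^\ell\big((F^{(1)}(G))^q\big)\partial_t^{2-\ell}\big((\partial_1 G)^q\big)$, expands $(\partial_1 G)^q=(\partial_1 p\,s+p\,\partial_1 s)^q$ via the binomial theorem, differentiates term by term, and only then evaluates at $t_1$. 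Your route bypasses all of this by computing exactly three scalars $w$, $\partial_t w$, $\partial_t^2 w$ at $t_1$ (each a one-line chain-rule computation once $F^{(2)}(0)=0$ is invoked) and substituting. The paper's decomposition is consistent with the Bell-polynomial machinery used elsewhere (e.g.\ Proposition~\ref{irradiacion}, where arbitrary-order time derivatives are needed), but for this particular second-order evaluation your argument is both shorter and more transparent.
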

\begin{proof}
Observe that, since $s(x_{1},t_{1})=0$ and $ \partial_1 s(x_{1},t_{1})=2\alpha$,
\begin{eqnarray*}
	\left( \sum_{k=0}^{q}\binom{q}{k}(\partial_{1}p(x))^{k}(p(x))^{q-k}  \left(s(x,t)^{k}\partial_{1}s(x,t)^{q-k} \right)    \right)\Bigg|_{(x,t_{1})}=(p(x))^{q}(2\alpha)^{q}, 
\end{eqnarray*}
\begin{eqnarray*}
\begin{split}
	&	\sum_{k=0}^{q}\binom{q}{k}(\partial_{1}p(x))^{k}(p(x))^{q-k}  \partial_{t}\left(s(x,t)^{k}\partial_{1}s(x,t)^{q-k} \right)\Bigg|_{(x,t_{1})}	\\
	&\qquad =q (2m\alpha)(2\alpha)^{q-1} \partial_{1}p(x)(p(x))^{q-1},
\end{split}
\end{eqnarray*}
and
\begin{equation*}
	\begin{split}
		&\sum_{k=0}^{q}\binom{q}{k}(\partial_{1}p(x))^{k}(p(x))^{q-k}  \partial_{t}^{2}\left(s(x,t)^{k}\partial_{1}s(x,t)^{q-k} \right)\Bigg|_{(x,t_{1})}\\
		& \qquad =p(x)^{q}(q(2\alpha)^{q-1}(-1)(2\alpha)(2m\alpha)^{2})\\
		&\qquad \quad +\binom{q}{2}(\partial_{1}p(x))^{2} p(x)^{q-2} \left(2(2m\alpha)^{2}(2\alpha)^{q-2} \right)\\
		&\qquad = p(x)^{q} \left( q(2\alpha)^{q-1}(-1)(2\alpha)(2m\alpha)^{2} \right)+q(q-1)(\partial_{1}p(x))^{2} p(x)^{q-2} \left( (2m\alpha)^{2}(2\alpha)^{q-2} \right).
	\end{split}
\end{equation*}
Therefore, using Lemma \ref{Bell_lema1} and expanding, 
\[
	\begin{split}
		2&\partial_{t}^{2}\big( \left( \partial_{1}v(x,t) \right)^{q} \big)\Big|_{(x,t_{1})}\\
		&=2 \partial_{t}^{2}\Big( \big( \partial_{1}(F\circ G) (x,t) \big)^{q} \Big)\Big|_{(x,t_{1})}\\
		&=2\partial_{t}^{2}\left( \left( F^{(1)}(G(x,t)) \right)^{q} \big(\partial_{1}G(x,t) \big)^{q}\right)\Big|_{(x,t_{1})}\\
		&=2\sum_{\ell=0}^{2} \binom{2}{\ell }\partial_{t}^{\ell}\left( F^{(1)}(G(x,t))^{q} \right) \partial_{t}^{2-\ell}\left( (\partial_{1}G(x,t))^{q} \right)\Big|_{(x,t_{1})}.
\end{split}
\]
From the classical Leibnitz rule,
\[
\begin{split}
		2&\partial_{t}^{2}\big( \left( \partial_{1}v(x,t) \right)^{q} \big)\Big|_{(x,t_{1})}\\
		&=2\sum_{\ell=0}^{2} \binom{2}{\ell }\left(\sum_{k_{1}+k_{2}+\dots+k_{q}=\ell}  \binom{\ell}{k_{1},k_{2},\dots,k_{q}}\prod_{i=1}^{q}\partial_{t}^{(k_{i})}(F^{(1)}(G(x,t)))  \right)\Bigg|_{(x,t_{1})}\times\\
		&\qquad \hspace{1.5cm} \partial_{t}^{2-\ell}\Big( \Big(\partial_{1}p(x)s(x,t)+p(x)\partial_{1}s(x,t) \Big)^{q}\Big) \Big|_{(x,t_{1})}\\
		&=2\sum_{\ell=0}^{2} \binom{2}{\ell }\left(\sum_{k_{1}+k_{2}+\dots+k_{q}=\ell}  \binom{\ell}{k_{1},k_{2},\dots,k_{q}}\prod_{i=1}^{q}\partial_{t}^{(k_{i})}(F^{(1)}(G(x,t)))  \right)\Bigg|_{(x,t_{1})}\times\\
		&\qquad \hspace{1.5cm}\left( \sum_{k=0}^{q}\binom{q}{k}(\partial_{1}p(x))^{k}(p(x))^{q-k}  \partial_{t}^{2-\ell}\left(s(x,t)^{k}\partial_{1}s(x,t)^{q-k} \right)    \right)\Bigg|_{(x,t_{1})}.
\end{split}
\]
Evaluating,
\begin{equation*}
	\begin{split}
	& 2 \partial_{t}^{2}	(\partial_{1}v(x,t)^{q})\Big|_{(x,t_{1})}\\
		&\quad =2\left(  F^{(1)}(0)^{q}  \right)\Big(p(x)^{q}(q(2\alpha)^{q-1}(-1)(2\alpha)(2m\alpha)^{2}) \\
		& \qquad \qquad\qquad \qquad +q(q-1)(\partial_{1}p(x))^{2}(p(x)^{q-2})((2m\alpha)^{2}(2\alpha)^{q-2}) \Big) \\
		&\qquad +2\cdot2\left( 3C F^{(2)}(0)F^{(1)}(0)^{q-1}  \right)\left(q(\partial_{1}p(x))(p(x))^{q-1}((2m\alpha)(2\alpha)^{q-1}) \right) \\
		&\qquad +2\left( qF^{(3)}(0)p(x)^{2}(2m\alpha)^{2}F^{(1)}(0)^{q-1}\right)   (p(x))^{q}(2\alpha)^{q}.
	\end{split}
\end{equation*}
Simplifying,
\begin{equation}\label{graaux2}
	\begin{split}
		&\quad =2\left(  F^{(1)}(0)  \right)^{q}\Big( p(x)^{q}(q(2\alpha)^{q-1}(-1)(2\alpha)(2m\alpha)^{2}) \\
		& \qquad \qquad \qquad \qquad +q(q-1)(\partial_{1}p(x))^{2}(p(x)^{q-2})((2m\alpha)^{2}(2\alpha)^{q-2}) \Big) \\
		&\qquad +2^{q+3}qF^{(3)}(0) \left( F^{(1)}(0) \right)^{q-1}p(x)^{q+2}m^{2}\alpha^{q+2} \\
		&\quad =-2^{q+3}qp(x)^{q} \left(  F^{(1)}(0) \right)^{q}m^{2}\alpha^{q+2}+q(q-1)2^{q+1}(\partial_{1}p(x))^{2}p(x)^{q-2} \left( F^{(1)}(0) \right)^{q}m^{2}\alpha^{q}  \\
		&\qquad +2^{q+3}qF^{(3)}(0) \left( F^{(1)}(0) \right)^{q-1}p(x)^{q+2}m^{2}\alpha^{q+2}.
	\end{split}
\end{equation}
The final outcome in \eqref{graaux2} concludes the proof of \eqref{graaux22}.
\end{proof}

\subsection{Conclusions in the case $q$ odd} Now we are ready to state the two main results of this section.

\begin{proposition}\label{ecugrac}
Under $F^{(1)}(0)\neq0$, $F^{(3)}(0)\neq 0$, and $q$ odd, the following is satisfied. Let $v$ be as in \eqref{def_v}, and satisfying \eqref{A2} with $q$ odd. Then, if $t_{1}\in\mathbb{R}$ is any element such that $s(x_{1},t_{1})=0$,  $ \partial_t s(x_{1},t_{1})=2\alpha m$, and $ \partial_1 s(x_{1},t_{1})=2\alpha$, then it holds true that:
\begin{equation*}\label{gradc}
	\begin{split}
		0 & = \left(48F^{(3)}(0)m^{2}\alpha^{3} \right)^{-1}\Big( \partial_{t}^{2}	\Big( \partial_t v+\partial_{x_1}(\Delta v) +2(\partial_{x_1}v)^q\Big) \Big)\Big|_{(x,t_{1})} \\
		& \quad = p(x)\Big( |\nabla p(x)|^{2}+ 2(\partial_{1} p(x))^{2}\Big)+p(x)^{q-2}(\partial_{1} p(x))^{2} \left( \frac{q(q-1)2^{q-3}\alpha^{q-3}F^{(1)}(0)^{q}}{3F^{(3)}(0)} \right)\\
		& \qquad - p(x)^{3}\left(\frac{m}{3} +4\alpha^{2} \right) + p(x)^{5}\left( \frac{2\alpha^{2}F^{(5)}(0)}{3F^{(3)}(0)} -\frac{2\alpha^{2} F^{(3)}(0)}{F^{(1)}(0)}\right) \\
		& \qquad + p(x)^{q}(1-q)\frac{2^{q-1}\alpha^{q-1}F^{(1)}(0)^{q}}{3F^{(3)}(0)}+p(x)^{q+2}2^{q-1} \left( F^{(1)}(0) \right)^{q-1}\alpha^{q-1} \left( 1-\frac{q}{3} \right).
	\end{split}
\end{equation*}
\end{proposition}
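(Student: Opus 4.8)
The plan is to read the identity directly off the two preceding lemmas; there is essentially no analytic content beyond them. Since $v$ is a solution of \eqref{A2}, the quantity $\partial_t v+\partial_{x_1}(\Delta v)+2(\partial_{x_1}v)^q$ vanishes identically on $\mathbb R^N\times\mathbb R$, hence so does its second $t$-derivative, and in particular
\[
\partial_t^2\Big(\partial_t v+\partial_{x_1}(\Delta v)\Big)\Big|_{(x,t_1)}+2\,\partial_t^2\big((\partial_{x_1}v)^q\big)\Big|_{(x,t_1)}=0 .
\]
So the whole content of the proposition is to insert the formulas of Lemma \ref{ecugra1} and Lemma \ref{ecugra1nl} into this relation and to rearrange.

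First I would record that the three conditions imposed on $t_1$ are mutually compatible and realizable: from $s(x_1,t)=\sin(2\alpha(x_1+mt))$ one has $\partial_t s=m\,\partial_1 s$ identically, so any $t_1$ with $s(x_1,t_1)=0$ and $\partial_1 s(x_1,t_1)=2\alpha$ automatically satisfies $\partial_t s(x_1,t_1)=2\alpha m$, and such $t_1$ clearly exists. Consequently both Lemma \ref{ecugra1} (which only requires $s(x_1,t_1)=0$ and $\partial_t s(x_1,t_1)=2\alpha m$) and Lemma \ref{ecugra1nl} apply at this $t_1$. Substituting their right-hand sides into the displayed identity produces a single scalar relation whose terms are monomials in $p(x)$, $|\nabla p(x)|^2$ and $(\partial_1 p(x))^2$, with coefficients assembled from $F^{(1)}(0),F^{(3)}(0),F^{(5)}(0),m,\alpha,q$. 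Under the standing hypotheses ($F^{(3)}(0)\neq0$, $\alpha>0$, and $m\neq0$) the factor $48F^{(3)}(0)m^2\alpha^3$ is nonzero, so I divide the relation by it; this is exactly the normalization written in the statement.

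What remains is the collection of like monomials, and this is where the only real effort lies. Grouping by $p(x)|\nabla p(x)|^2$ and $p(x)(\partial_1 p(x))^2$ reproduces the first bracket directly from Lemma \ref{ecugra1}; the term $p(x)^{q-2}(\partial_1 p(x))^2$ comes from the single matching term in Lemma \ref{ecugra1nl}; the two cubic contributions $-16F^{(3)}(0)m^3\alpha^3$ and $-192F^{(3)}(0)m^2\alpha^5$ combine, after division, to $-p(x)^3\big(\tfrac{m}{3}+4\alpha^2\big)$; the two quintic contributions (one carrying $F^{(5)}(0)$, one carrying $(F^{(3)}(0))^2/F^{(1)}(0)$) combine to $p(x)^5\big(\tfrac{2\alpha^2F^{(5)}(0)}{3F^{(3)}(0)}-\tfrac{2\alpha^2F^{(3)}(0)}{F^{(1)}(0)}\big)$; and the $p(x)^q$ and $p(x)^{q+2}$ contributions, which each split into a $q$-independent piece (from the linear part, Lemma \ref{ecugra1}) and a $q$-proportional piece (from the nonlinear part, Lemma \ref{ecugra1nl}), collapse to the factors $1-q$ and $1-\tfrac{q}{3}$ respectively, once one keeps careful track of the powers of $2$ (using $48=2^4\cdot3$) and of $\alpha$. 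The main obstacle is thus purely bookkeeping: the task reduces to verifying that each coefficient collapses to the displayed one, with no further ideas required.
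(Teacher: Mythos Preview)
Your proposal is correct and matches the paper's own proof almost verbatim: the paper simply states that the result follows by adding Lemmas \ref{ecugra1} and \ref{ecugra1nl} and using that \eqref{A2} (hence all its time derivatives) vanishes identically. Your extra care in noting that the normalization requires $m\neq 0$, and in checking the compatibility of the conditions on $t_1$, is if anything more explicit than the paper.
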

\begin{proof}
The proof follows directly from the addition of the results in Lemmas \ref{ecugra1} and \ref{ecugra1nl}, and the fact that \eqref{A2} is valid for all $(t,x)$. In particular, its time derivatives are also zero for all $(t,x)$.
\end{proof}

\begin{proposition}  Under $F^{(1)}(0)\neq0$, $q$ odd, the following equations are satisfied by $p$:
\begin{equation}\label{sec1_repetida}
	\begin{split}
		&   \Delta p(x)+ 2\partial_{1}^{(2)}p(x) + \left( m- 4\alpha^{2}\right)p(x) \\
		&\quad +4 \alpha^{2}\left( \frac{F^{(3)}(0)}{F^{(1)}(0)} \right)p(x)^{3}+  2^q \alpha^{q-1} \left(F^{(1)}(0) \right)^{q-1} p(x)^{q}=0,
	\end{split}
\end{equation}
and
\begin{equation}\label{Lq}
\Delta p(x)-12\alpha^{2} p(x)  +12\alpha^{2}\left(  \frac{F^{(3)}(0)}{F^{(1)}(0)}\right)p(x)^{3}  +2^{q}\alpha^{q-1} \left(F^{(1)}(0)\right)^{q-1} p(x)^{q} =0.
\end{equation}
In consequence, there is a rigidity equation in the $x_1$ variable, 
\begin{equation}\label{L1q}
\partial_{1}^{(2)}p(x) + \left( \frac12m +4 \alpha^{2}\right)p(x)- 4 \alpha^{2}\left( \frac{F^{(3)}(0)}{F^{(1)}(0)} \right)p(x)^{3}=0,
\end{equation}
and the Laplacian for the $(x_2,\ldots,x_N)$ coordinates satisfies
\begin{equation}\label{Lcq}
\Delta_c p(x)- \left( \frac12m+16\alpha^{2}  \right)p(x)  +16\alpha^{2}\left(  \frac{F^{(3)}(0)}{F^{(1)}(0)}\right)p(x)^{3}  +2^{q}\alpha^{q-1} \left(F^{(1)}(0)\right)^{q-1} p(x)^{q}  =0.
\end{equation}
\end{proposition}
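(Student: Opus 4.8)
The whole statement is an assembly of the propagation identities obtained in Sections \ref{sec:propa1} and \ref{adicional}: no new computation is needed. The plan is, first, to read off \eqref{sec1_repetida} and \eqref{Lq} as direct specializations of Lemmas \ref{Corf1} and \ref{EcuLapl2}, and then to produce the rigidity equations \eqref{L1q} and \eqref{Lcq} by two elementary linear combinations of these two PDEs.

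For \eqref{sec1_repetida} I would simply take identity \eqref{sec1} of Lemma \ref{Corf1} and specialize to $q$ odd. Since then $1-(-1)^q=2$, \eqref{sec1} reads $4\alpha F^{(1)}(0)\big(mp(x)-4\alpha^2 p(x)+\Delta p(x)+2\partial_1^{(2)}p(x)\big)+16\alpha^3 F^{(3)}(0)p(x)^3+4(2\alpha)^q\big(F^{(1)}(0)\big)^q p(x)^q=0$. Dividing by $4\alpha F^{(1)}(0)$, which is licit since $\alpha>0$ and $F^{(1)}(0)\neq 0$ by the main conclusion of Corollary \ref{F se anula}, yields exactly \eqref{sec1_repetida}.

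For \eqref{Lq} I would invoke Lemma \ref{EcuLapl2}. Because $v$ solves \eqref{A2}, the residual $\partial_t v+\partial_{x_1}(\Delta v)+2(\partial_{x_1}v)^q$ vanishes identically in $(t,x)$, hence so does its time derivative; in particular the integrand, and therefore the whole left-hand side of \eqref{integrada}, is zero. This forces $F^{(1)}(0)\big(\Delta p(x)-12\alpha^2 p(x)+12\alpha^2\tfrac{F^{(3)}(0)}{F^{(1)}(0)}p(x)^3+2^q\alpha^{q-1}(F^{(1)}(0))^{q-1}p(x)^q\big)=0$, and dividing by $F^{(1)}(0)\neq 0$ gives \eqref{Lq}.

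Finally the rigidity equations follow by pure algebra. Subtracting \eqref{Lq} from \eqref{sec1_repetida} cancels both the $\Delta p$ term and the $p^q$ term and leaves $2\partial_1^{(2)}p(x)+(m+8\alpha^2)p(x)-8\alpha^2\tfrac{F^{(3)}(0)}{F^{(1)}(0)}p(x)^3=0$, i.e.\ \eqref{L1q} after dividing by $2$. For \eqref{Lcq} I would write $\Delta p=\partial_1^{(2)}p+\Delta_c p$ in \eqref{Lq} and substitute $\partial_1^{(2)}p=-(\tfrac12 m+4\alpha^2)p+4\alpha^2\tfrac{F^{(3)}(0)}{F^{(1)}(0)}p^3$ coming from \eqref{L1q}, then collect the coefficients of $p$, $p^3$ and $p^q$. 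There is essentially no obstacle here: the only point requiring care is the sign bookkeeping in these combinations, since all the analytic difficulty has already been absorbed into the propagation Lemmas \ref{Corf1} and \ref{EcuLapl2}.
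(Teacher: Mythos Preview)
Your proof is correct and follows essentially the same route as the paper's own proof: equation \eqref{sec1_repetida} is read off from \eqref{sec1} in Lemma \ref{Corf1} (with $q$ odd and division by $4\alpha F^{(1)}(0)$), equation \eqref{Lq} comes from \eqref{integrada} in Lemma \ref{EcuLapl2} using that the left-hand side vanishes since $v$ solves \eqref{A2}, and \eqref{L1q}, \eqref{Lcq} are obtained by the linear combinations you describe. One tiny remark: you need not appeal to Corollary \ref{F se anula} for $F^{(1)}(0)\neq 0$, since this is already part of the Proposition's hypothesis.
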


\begin{proof}
Equation \eqref{sec1_repetida} is Lemma \ref{Corf1}, equation \eqref{sec1}  in the case $q$ odd. Similarly, \eqref{Lq} is nothing but the result obtained from \eqref{integrada} in Lemma \ref{EcuLapl2}. Finally, \eqref{L1q} and \eqref{Lcq} are obtained after suitable subtraction and addition of \eqref{sec1_repetida} and \eqref{Lq}. 
\end{proof}

%
%
%
%

\begin{corollary}\label{F3}
If $q$ is odd, $F^{(1)}(0)\neq 0$ and $F$ is nontrivial, then $F^{(3)}(0)\neq 0$.
\end{corollary}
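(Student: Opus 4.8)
The plan is to argue by contradiction, exploiting the rigidity identity in the $x_1$ variable already established in the preceding Proposition. Assume that $F^{(3)}(0)=0$ while, as in the statement, $q$ is odd and $F^{(1)}(0)\neq 0$; recall also that $p$ is nontrivial throughout (otherwise $v\equiv 0$ by $F(0)=0$, and the solution is trivial). Since $F^{(1)}(0)\neq 0$ and $q$ is odd, equation \eqref{L1q} is available, and inserting $F^{(3)}(0)=0$ into it kills the cubic term, leaving the linear constant-coefficient ODE in $x_1$
\begin{equation*}
\partial_{1}^{(2)}p(x)+\kappa\, p(x)=0,\qquad x\in\R^{N},\quad \kappa:=\tfrac12 m+4\alpha^{2}\in\R.
\end{equation*}
Thus, for every frozen $x_c:=(x_2,\dots,x_N)$ (an empty list if $N=1$), the slice $\phi_{x_c}(x_1):=p(x_1,x_c)$ is a $C^{2}$ solution on all of $\R$ of $\phi''+\kappa\phi=0$.

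The next step is the elementary classification of these slices according to the sign of $\kappa$, combined with the decay hypothesis $p(x)\to 0$ as $|x|\to\infty$ from Definition \ref{mono} (applied with $x_c$ fixed and $x_1\to\pm\infty$). If $\kappa>0$ then $\phi_{x_c}(x_1)=A(x_c)\cos(\sqrt{\kappa}\,x_1)+B(x_c)\sin(\sqrt{\kappa}\,x_1)$; evaluating along $x_1=2\pi k/\sqrt{\kappa}$ (resp.\ $x_1=2\pi k/\sqrt{\kappa}+\pi/(2\sqrt{\kappa})$) and letting $k\to\infty$ forces $A(x_c)=0$ (resp.\ $B(x_c)=0$). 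If $\kappa=0$ then $\phi_{x_c}$ is affine, $\phi_{x_c}(x_1)=A(x_c)+B(x_c)x_1$, and boundedness already forces $B(x_c)=0$, hence $A(x_c)=0$. If $\kappa<0$ then $\phi_{x_c}(x_1)=A(x_c)e^{\sqrt{-\kappa}\,x_1}+B(x_c)e^{-\sqrt{-\kappa}\,x_1}$, and decay as $x_1\to+\infty$ gives $A(x_c)=0$ while decay as $x_1\to-\infty$ gives $B(x_c)=0$. In all three cases $\phi_{x_c}\equiv 0$ for every $x_c$, i.e.\ $p\equiv 0$, contradicting the nontriviality of $p$. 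Hence $F^{(3)}(0)\neq 0$.

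I do not expect a genuine obstacle here: the entire nontrivial content is already packaged in \eqref{L1q} (assembled from Lemma \ref{Corf1} and Lemma \ref{EcuLapl2}), and what remains is the standard fact that a globally decaying solution of a one-dimensional linear ODE with constant coefficients must vanish. The one place that requires minding is the oscillatory case $\kappa>0$, where nonzero bounded solutions exist on any bounded interval; there one must genuinely invoke decay (not mere boundedness) of $p$ along the $x_1$ direction, which is precisely why the decay condition on $p$ was built into Definition \ref{mono}. One could alternatively combine \eqref{sec1_repetida} with \eqref{Lcq} to the same end, but \eqref{L1q} is the shortest route.
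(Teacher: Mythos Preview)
Your proof is correct and follows essentially the same route as the paper: assume $F^{(3)}(0)=0$, plug into \eqref{L1q} to obtain the linear constant-coefficient ODE $\partial_1^{(2)}p+\kappa p=0$ in the $x_1$ variable, and then use the decay hypothesis on $p$ to force $p\equiv 0$, a contradiction. Your version is in fact more carefully written than the paper's, which simply asserts without details that the solutions are ``exponentials, linearly growing, constants or trigonometric functions'' and that none of these are compatible with decay.
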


\begin{proof}
From \eqref{L1q}, if we assume $F^{(3)}(0)= 0$ and $F^{(1)}(0) \neq 0$,
\begin{equation}\label{corta1}
\partial_{1}^{(2)}p(x) + \left( \frac12m +4 \alpha^{2}\right)p(x) =0.
\end{equation}
Equation \eqref{corta1} is a classical second order ODE in the $x_1$ variable and has solutions depending on the value of $m$, given by exponentials, linearly growing, constants or trigonometric functions. In any case, the condition $p(x)\to 0$ as $|x|\to +\infty$ is satisfied, leading to $p\equiv 0$. Therefore, we get a contradiction.
\end{proof}

\section{Uniqueness in dimension $N=1$, $q$ odd}\label{N1} 

In this section and the next one we prove that the mKdV breather is the unique quasimonochromatic gKdV solution, namely Theorem \ref{MT1}. 

\medskip

Set $N=1$.  If $F^{(1)}(0)=0$, by Corollary \ref{F se anula}  one has $F^{(n)}(0)=0$ for each $n\in \mathbb N$. Since $F$ is supposed analytic one concludes that $F\equiv 0$, leading to a trivial solution. 
Therefore, we can assume $F^{(1)}(0)\neq0$. 

\subsection{The case $q=3$} First we consider the mKdV case.  From now on we denote $x=x_1$.  Notice that from \eqref{Lcq} in the case $N=1$, since $p(x)$ is nontrivial, one gets
\[
- \left( \frac12m+16\alpha^{2}  \right)p(x)  +8\alpha^{2}\left( 2 \frac{F^{(3)}(0)}{F^{(1)}(0)} +  \left(F^{(1)}(0)\right)^{2} \right)p(x)^{3}   =0.
\]
and therefore
\begin{equation}\label{m}
	m=-32\alpha^2.
\end{equation}
and
\begin{equation*}
	 F^{(3)}(0)=-\frac{1}{2}F^{(1)}(0)^3.
\end{equation*}
Replacing in \eqref{L1q} $p$ satisfies 
\[
	-\partial_{1}^{(2)} p(x)=-12 \alpha^2 p(x)+  2 \left( \alpha F^{(1)}(0) \right)^2 p^3(x), \quad x\in \mathbb R.
\]
Performing the rescaling $p(x)=\zeta_0 \widetilde p(2\sqrt{3}\alpha x)$, $\zeta_0=2\sqrt{\frac{3}{F^{(1)}(0)^2}}$, one obtains
\begin{equation}\label{Lions}
	-\partial_{1}^{(2)}  \widetilde p(y) =-\widetilde p(y)+  2   \widetilde p(y)^3, \quad y\in \mathbb R.
\end{equation}
Since $p$ is not trivial, then $\widetilde p$ is the unique (up to translations) solution to the equation \eqref{Lions}, see \cite[Theorem 5]{BL}. This implies that $\widetilde p=\sech ( \cdot + y_0)$, where the convergence to zero at infinity discards a nontrivial dependence of $y_0$ on the rest of variables. Moreover,
\begin{equation}\label{valor_final_p}
p(x)=\zeta_0 \sech \left( 2\sqrt{3}\alpha x + x_2\right), \quad x_2:=2\sqrt{3}\alpha y_0.
\end{equation}
Now we will  show that $F$ is the function $-2\arctan$ or $2\arctan$. Without loss of generality we assume that $x_2=0$.   Performing the ansatz \eqref{ansatz} with \eqref{m}, \eqref{valor_final_p} and replacing  in \eqref{mKdV} and \eqref{mZKqv} respectively, we get after some simplifications and evaluating at $x=0$:
\begin{equation*}
	\begin{aligned}
		0=&~{} 6 \zeta _0^3 \cos ^4\left(64 \alpha ^3 t\right) F^{(1)}\left(\zeta _0 \left(-\sin \left(64 \alpha ^3 t\right)\right)\right){}^2 F^{(2)}\left(\zeta _0 \left(-\sin \left(64 \alpha ^3 t\right)\right)\right)\\
		&-72 \sin \left(64 \alpha ^3 t\right) F^{(1)}\left(\zeta _0 \left(-\sin \left(64 \alpha ^3 t\right)\right)\right) \\
		& +24 \zeta _0^2 \sin \left(64 \alpha ^3 t\right) \cos ^2\left(64 \alpha ^3 t\right) F^{(1)}\left(\zeta _0 \left(-\sin \left(64 \alpha ^3 t\right)\right)\right){}^3\\
		&+48 \zeta _0 \sin ^2\left(64 \alpha ^3 t\right) F^{(2)}\left(\zeta _0 \left(-\sin \left(64 \alpha ^3 t\right)\right)\right) \\
		& -48 \zeta _0 \cos ^2\left(64 \alpha ^3 t\right) F^{(2)}\left(\zeta _0 \left(-\sin \left(64 \alpha ^3 t\right)\right)\right)\\
		&+\zeta _0^3 \cos ^4\left(64 \alpha ^3 t\right) F^{(4)}\left(\zeta _0 \left(-\sin \left(64 \alpha ^3 t\right)\right)\right)\\
		&+24 \zeta _0^2 \sin \left(64 \alpha ^3 t\right) \cos ^2\left(64 \alpha ^3 t\right) F^{(3)}\left(\zeta _0 \left(-\sin \left(64 \alpha ^3 t\right)\right)\right),
	\end{aligned}
\end{equation*}
and
\begin{equation*}
	\begin{aligned}
		0&=-18 F^{(1)}\left(\zeta _0 \left(-\sin \left(64 \alpha ^3 t\right)\right)\right)+2 \zeta _0^2 \cos ^2\left(64 \alpha ^3 t\right) F^{(1)}\left(\zeta _0 \left(-\sin \left(64 \alpha ^3 t\right)\right)\right){}^3\\
		&\quad +12 \zeta _0 \sin \left(64 \alpha ^3 t\right) F^{(2)}\left(\zeta _0 \left(-\sin \left(64 \alpha ^3 t\right)\right)\right)+\zeta _0^2 \cos ^2\left(64 \alpha ^3 t\right) F^{(3)}\left(\zeta _0 \left(-\sin \left(64 \alpha ^3 t\right)\right)\right).
	\end{aligned}
\end{equation*}
Now, if we set $ z=-\zeta_0\sin \left(64 \alpha ^3 t\right)$ then $\cos \left(64 \alpha ^3 t\right)=\sqrt{1-\frac {z^2} {\zeta_0^2}}$, provided that $1-\frac {z^2} {\zeta_0^2}\ge 0$. Then from the two last equalities we conclude
\begin{equation}\label{F1}
	\begin{aligned}
		0=&~{} 6 \zeta _0^3 \left(1-\frac{z^2}{\zeta _0^2}\right)^2 F^{(1)}(z)^2 F^{(2)}(z)-24 \zeta _0 z \left(1-\frac{z^2}{\zeta _0^2}\right) F^{(1)}(z)^3\\
		&~{} -48 \zeta _0 \left(1-\frac{z^2}{\zeta _0^2}\right) F^{(2)}(z) +\frac{48 z^2 F^{(2)}(z)}{\zeta _0}\\
		&~{}+\zeta _0^3 F^{(4)}(z) \left(1-\frac{z^2}{\zeta _0^2}\right)^2-24 \zeta _0 z F^{(3)}(z) \left(1-\frac{z^2}{\zeta _0^2}\right)+\frac{72 zF^{(1)}(z)}{\zeta _0},
	\end{aligned}
\end{equation}
and
\begin{equation}\label{F2}
	\begin{aligned}
		0&=2 \zeta _0^2 \left(1-\frac{z^2}{\zeta _0^2}\right) F^{(1)}(z)^3+\zeta _0^2 F^{(3)}(z) \left(1-\frac{z^2}{\zeta _0^2}\right)-18 F^{(1)}(z) -12 z F^{(2)}(z).
	\end{aligned}
\end{equation}
Multiplying by $\zeta _0 \left(1-\frac{z^2}{\zeta _0^2}\right)$ the $z$ derivative of \eqref{F1} and subtracting \eqref{F2}, we get
\begin{equation}\label{F3b}
\begin{aligned}
0=	&~{} -\frac{20 z^3 F^{(1)}(z)^3}{\zeta _0}-\frac{66 z^2 F^{(2)}(z)}{\zeta _0}+F^{(3)}(z) \left(10 \zeta _0 z-\frac{10 z^3}{\zeta _0}\right) \\
&~{} +20 \zeta _0 z F^{(1)}(z)^3-\frac{72 z F^{(1)}(z)}{\zeta _0}+18 \zeta _0 F^{(2)}(z).
\end{aligned}
\end{equation}
Also multiplying \eqref{F2} by $\zeta _0^2$ and subtracting \eqref{F3b} multiplied by $10z\zeta_0$ we have
\begin{equation*}
0=	\zeta _0 \left(\left( 3  z^2 +\zeta_0^2 \right) F^{(2)}(z)+ 6  z F^{(1)}(z) \right),
\end{equation*}
which solution is given by
\begin{equation*}
F(z)=	\frac{c_1}{\sqrt{3} \zeta _0} \arctan\left(\frac{\sqrt{3} z}{\zeta _0}\right) +c_2,
\end{equation*}
for some constants $c_1, c_2\in \mathbb{R}$. Since $F(0)=0$, $c_2=0$, then from \eqref{ansatz} and the fact that $\zeta_0=2\sqrt{\frac{3}{F^{(1)}(0)^2}}$,
\begin{equation}\label{F}
	F(z)=2\frac{F^{(1)}(0)}{|F^{(1)}(0)|}  \arctan\left(\frac{|F^{(1)}(0)| z}{2}\right).
\end{equation}
Thus, from \eqref{valor_final_p} and \eqref{F},
\begin{equation}\label{ansatzfinal}
\begin{aligned}
	v(t,x)&=F\left(\zeta_0\sech \left( 2\sqrt{3}\alpha x+x_2 \right)\sin \left( 2\alpha(x+m t) \right) \right)\\
&=F\left(\frac{2\sqrt{3}}{|F^{(1)}(0)|}\sech \left( 2\sqrt{3}\alpha x+x_2 \right)\sin \left( 2\alpha(x-32\alpha^2t) \right)\right)\\
&=\pm2  \arctan\left(\sqrt{3}\sech\left(2\sqrt{3}\alpha x+x_2\right)\sin \left( 2\alpha(x-32\alpha^2t) \right)\right).
\end{aligned}
\end{equation}
Hence from \eqref{ansatzfinal} we obtain the solution has the form as in \eqref{ansatzlamb}.

\subsection{The case $q\neq 3$} Finally we consider the case $q\neq 3$. Recall that $N=1$. We only focus in the case $q$ odd since $q$ even yields a suitable ``blow-up'', see the proof of Theorem \ref{MT2} in the next section. 

\medskip

From \eqref{eqvt} 
one has  
\begin{equation}\label{vqt}
	\begin{split}
		0 = &~{} 2\alpha mF^{(1)}(0)\partial_{1}^{(3)} p(x)   -24m\alpha^{3}F^{(1)}(0) \partial_{1}p(x) \\
		& +72m\alpha^{3}F^{(3)}(0)p(x)^{2}\partial_{1}p(x)+2^{q+1}q\alpha^{q}m \left( F^{(1)}(0) \right)^{q} \left(p(x) \right)^{q-1}\partial_{1}p(x).
	\end{split}
\end{equation}
Also from \eqref{eq_simplificada_2} in Lemma \ref{eq_simplificada_1}, 
\begin{equation}\label{vqx}
\begin{aligned}
  0=&~{} 8 \alpha  F^{(1)}(0) \partial_1^{(3)} p(x)+ 2 \alpha  m F^{(1)}(0) \partial_{1} p(x) -32 \alpha ^3 F^{(1)}(0) \partial_{1} p(x)\\
  &~{} +96 \alpha ^3 F^{(3)}(0) p(x)^2 \partial_{1}p(x) + 2^{q+2} q\alpha^q   \left(F^{(1)}(0) \right)^q \left(  p(x)\right)^{q-1}\partial_{1}p(x) .
  \end{aligned}
\end{equation}
Now multiplying by 4 equation \eqref{vqt} and adding  \eqref{vqx} multiplicated by $-m$ we get
after some simplifications 
\[
0= F^{(1)}(0) \left( \left( m-32 \right) \alpha ^2+2^{q+1} q \left(\alpha  F^{(1)}(0) p(x)\right)^{q-1} \right) + 96 \alpha ^2 F^{(3)}(0) p(x)^2 .
\] 
Assume $F^{(1)}(0)\neq 0$. Denoting $s:=\alpha  F^{(1)}(0) p(x)$ we obtain the equivalen equation for $s$ in the image of $\alpha  F^{(1)}(0) p$:
\[
0=m-32 \alpha ^2+\frac{96 F^{(3)}(0) s^2}{F^{(1)}(0)^3}+2^{q+1} q s^{q-1}.
\]
Recall that $q\neq 3$. Above equality seen as a zero polynomial in $s$, that yields to a contradiction. Therefore $F^{(1)}(0)= 0$ and from Corollary \ref{F se anula} one gets $F\equiv 0$, leading to the undesired trivial solution.
Now the proof of Theorem \ref{MT1} in the case $N=1$ and $q$ odd is complete. The remaining part $q$ even will be proved in the following section, since it is independent of the dimension.

\section{Proof of Theorems \ref{MT2} and \ref{MT1}, $q$ even case}\label{Ngen}

We finally prove  Theorem \ref{MT2}, namely nonexistence of quasimonochormatic breathers in dimensions $N\geq 2$.  

\medskip

Assume $N\geq1$. First we consider  the case in where $q$ is even. 

\subsection{Case $q$ even} Notice that from \eqref{sec2} in Lemma \ref{Corf1},
\begin{equation}\label{EDOpar}
	6F^{(2)}(0)p(x)\partial_{1}p(x) +  2^{q-1}\alpha^{q-2} \left( F^{(1)}(0) \right)^{q}p(x)^{q} =0.
\end{equation} 
By  Corollary \ref{F se anula}, we can assume $F^{(1)}(0) \neq 0$, and consequently, $F^{(2)}(0)\neq 0$.
Equation \eqref{EDOpar} is a nonlinear first order ODE for $p$ in the variable $x_1$, the rest of variables being free parameters. Assuming that $p$ is nontrivial the solution is given by 
\begin{equation*}
 p(x)=
 \begin{cases}
\displaystyle{ \left((q-2) \left(\frac{\left(2\alpha  F^{(1)}(0) \right)^q }{12 \alpha ^2 F^{(2)}(0)}x_1-c_1(x_2,\ldots,x_N)\right)\right)^{\frac{1}{2-q}}}, \quad q> 2,\\
\displaystyle{c_1(x_2,\ldots,x_N) \exp\left( -  \left(\frac{\left(2\alpha  F^{(1)}(0) \right)^q}{12 \alpha ^2 F^{(2)}(0)}  \right) x_1 \right)}, \quad q=2.
 \end{cases} 
\end{equation*}
In the first case $p$ has a blow-up for some $x_1$ fixed, and in the second it does not converge to 0 as $x_1\to \pm\infty$ (one of the two directions), which is a contradiction. Therefore, for any $N$ and $q$ even, we discard quasimonochormatic breather solutions to ZK, proving Theorem \ref{MT1} in the remaining $q$ even, and Theorem \ref{MT2} for all $N\geq 2$ and $q$ even.

\subsection{Case $q$ odd}
 This part is inspired by the steps followed by Mandel in \cite[Proof of Thm. 1]{RainerMandel2021}.   If $F^{(1)}(0)=0$ then  by Corollary \ref{F se anula}   $F^{(n)}(0)=0$ for all $n\in \mathbb{N}$ thus $F\equiv 0$, since $F$ is supposed analytic. In case $F^{(1)}(0)\neq0$ by Corollary \ref{F3} we have  $F^{(3)}(0)\neq 0$. On the one hand we affirm $p$ does not changes  sign. Indeed from Lemma \ref{EcuLapl2} and Proposition \ref{ecugrac} one has 
\begin{equation}\label{hop}
  \begin{aligned}
0=&~{}\mu_1{p}^q+\mu_2{p}^3+\mu_3 p+\Delta p
\end{aligned}
\end{equation}
\begin{equation}\label{hop2}
  \begin{aligned}
0=&~{}\lambda_1{p}^3+\lambda_2{p}^{q+2}+\lambda_3{p}^{q}+\lambda_4p^5+p\left(|\nabla_c {p}|^2+3(\partial_{x_1}p)^2\right)+\lambda_5p^{q-2}(\partial_{x_1}p)^2.\\
\end{aligned}
\end{equation}
where
\begin{equation*}\begin{aligned}
  \mu_1&= 2^{q}\alpha^{q-1} F^{(1)}(0)^{q-1}, \quad \mu_2 = 12\alpha^{2}\frac{F^{(3)}(0)}{F^{(1)}(0)}, \quad \mu_3 = -12\alpha^2,\\
  \lambda_1&= -\frac{m}{3}-4\alpha^{2} , \quad \lambda_2 = 2^{q-1}F^{(1)}(0)^{q-1}\alpha^{q-1}\left( 1-\frac{q}{3} \right), \\ \lambda_3 &= (1-q)\frac{2^{q-1}\alpha^{q-1}F^{(1)}(0)^{q}}{3F^{(3)}(0)}, \quad \lambda_4 = \left(-\frac{2\alpha^{2} F^{(3)}(0)}{F^{(1)}(0)}+ \frac{2\alpha^{2}F^{(5)}(0)}{3F^{(3)}(0)} \right).\\
  \lambda_5&= \frac{q(q-1)2^{q-3}\alpha^{q-3}F^{(1)}(0)^{q}}{3F^{(3)}(0)}.
  \end{aligned}
\end{equation*}

Since $p$ is nontrivial, there is $x_0$ in where $p(x_0)\neq 0$. Without loss of generality we assume that $p(x_0)>0$. Suppose $p$ changes  sign. Let $B$ the maximal ball with center $x_0$ such that $p(x)>0$ for each $x\in B\subset \mathbb R^N$ and let $x^*\in \partial B$ such that $p(x^*)=0$. Since $p$ satisfies  \eqref{hop} in $B$ by  Hopf's Lemma \cite[Chapter 3, Lemma 3.4]{GilTru} we have $|\nabla {p(x^*)}|\neq 0$. Now let  $(x_n)\subset B$  a sequence such that $x_n\to x^*$. By \eqref{hop} and \eqref{hop2} one gets 

\begin{equation}\label{hop1}
  \begin{aligned}
0=&~{}\mu_1{p(x_n)}^q+\mu_2{p(x_n)}^3+\mu_3 p(x_n)+\Delta p(x_n)\\
&+\lambda_1{p(x_n)}+\lambda_2{p(x_n)}^{q}+\lambda_3{p(x_n)}^{q-2}+\lambda_4p(x_n)^3\\
&+\frac{1}{p(x_n)}\left(|\nabla_c {p(x_n)}|^2+3(\partial_{x_1}p(x_n))^2\right)+ \lambda_5p^{q-4}(\partial_{x_1}p(x_n))^2.\\
\end{aligned}
\end{equation}
  In the case $q\geq 5$ taking limit in \eqref{hop1} as $n\to \infty$ yields $\Delta p (x^*)$ unbounded, since $|\nabla_c {p(x_n)}|^2\neq 0$ or $(\partial_{x_1}p(x_n))^2\neq 0$, which is a contradiction.  In case $q=3$ the situation is more delicate and the sign condition of  $\left(\frac{2 F^{(1)}(0)^3}{F^{(3)}(0)}+3\right)>0$ is key. In fact \eqref{hop1} reads as follows 
\begin{equation}\label{explosion}
  \begin{aligned}
0=&~{}(\mu_1+\lambda_2+\lambda_4){p(x_n)}^3+\mu_2{p(x_n)}^3+\Delta p(x_n)\\
&+(\lambda_1+\lambda_3+\mu_3){p(x_n)}\\
&+\frac{1}{p(x_n)}\left(|\nabla_c {p(x_n)}|^2+\left(\frac{2 F^{(1)}(0)^3}{F^{(3)}(0)}+3\right)(\partial_{x_1}p(x_n))^2\right).
\end{aligned}
\end{equation}

\begin{lemma}\label{no_da}
One has
\begin{equation}\label{signo_cool}
\left( F^{(1)}(0) \right)^3> -\frac32 F^{(3)}(0).
\end{equation}
\end{lemma}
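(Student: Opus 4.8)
The plan is to read \eqref{signo_cool} off the elliptic identity \eqref{Lq} specialized to $q=3$, evaluated at a point where $|p|$ is maximal; for this particular statement nothing beyond \eqref{Lq} is really needed. We are in the case $q=3$, $N\ge 2$, and by Corollaries \ref{F se anula} and \ref{F3} both $F^{(1)}(0)$ and $F^{(3)}(0)$ are nonzero; after replacing $u$ by $-u$ if necessary (a symmetry of \eqref{mKdVN}, which sends $F\mapsto -F$ and hence $F^{(k)}(0)\mapsto -F^{(k)}(0)$) we may and do assume $F^{(1)}(0)>0$.

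First I would observe that, since $p\in C^2(\mathbb R^N)$ is nontrivial and $p(x)\to 0$ as $|x|\to\infty$, the number $M:=\|p\|_{L^\infty(\mathbb R^N)}$ is finite and strictly positive, and it is attained at some $x_0\in\mathbb R^N$. If $p(x_0)=M$ then $x_0$ is a global maximum of $p$, so the Hessian $D^2p(x_0)$ is negative semidefinite and $\Delta p(x_0)\le 0$; if instead $p(x_0)=-M$ then $x_0$ is a global minimum and $\Delta p(x_0)\ge 0$.

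Next I would evaluate \eqref{Lq} with $q=3$, namely
\[
\Delta p-12\alpha^2 p+12\alpha^2\,\frac{F^{(3)}(0)}{F^{(1)}(0)}\,p^3+8\alpha^2\bigl(F^{(1)}(0)\bigr)^2 p^3=0,
\]
at $x=x_0$. Since $p(x_0)^2=M^2$, solving for $\Delta p(x_0)$ and using the sign information above (together with $p(x_0)=\pm M$) gives, in either of the two cases and after dividing by $4\alpha^2\,|p(x_0)|>0$, the single inequality
\[
3\ \le\ M^2\Bigl(3\,\frac{F^{(3)}(0)}{F^{(1)}(0)}+2\bigl(F^{(1)}(0)\bigr)^2\Bigr).
\]
Because $M^2>0$, this forces $3\,F^{(3)}(0)/F^{(1)}(0)+2\bigl(F^{(1)}(0)\bigr)^2>0$, and multiplying by $F^{(1)}(0)>0$ yields $2\bigl(F^{(1)}(0)\bigr)^3+3F^{(3)}(0)>0$, which is exactly \eqref{signo_cool}.

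So the argument is essentially a one-line consequence of \eqref{Lq}; the only care needed is the max/min dichotomy — flipping the sign of $p(x_0)$ flips both the sign of $\Delta p(x_0)$ and the sign of the odd-degree terms in \eqref{Lq}, so the two cases collapse to the same inequality $12\alpha^2 M-\alpha^2 M^3\bigl(12\,F^{(3)}(0)/F^{(1)}(0)+8(F^{(1)}(0))^2\bigr)\le 0$ — together with the elementary facts that a global extremum of $p$ exists and that the semidefiniteness of $D^2p(x_0)$ there controls $\Delta p(x_0)$. I expect the only genuine conceptual point to be recognizing that, among the many identities derived so far, \eqref{Lq} alone already does the job at the extremum, so the more elaborate gradient identity of Proposition \ref{ecugrac} is not required for this lemma. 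If one wishes to avoid the normalization $F^{(1)}(0)>0$, one can alternatively also record that the first integral of \eqref{L1q}, namely $(\partial_{x_1}p)^2=-\bigl(\tfrac m2+4\alpha^2\bigr)p^2+2\alpha^2\,\tfrac{F^{(3)}(0)}{F^{(1)}(0)}\,p^4$ (from multiplying \eqref{L1q} by $\partial_{x_1}p$, integrating in $x_1$, and using the decay of $p$ and $\nabla p$), evaluated at $x_0$ together with \eqref{L1q} itself, forces $F^{(3)}(0)/F^{(1)}(0)<0$, which can then be combined with the displayed inequality.
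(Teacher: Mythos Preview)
Your proof is correct and takes essentially the same approach as the paper: both evaluate the elliptic identity \eqref{Lq} (specialized to $q=3$) at a global extremum of $p$ and read off the sign condition from the semidefiniteness of the Hessian there. The only cosmetic difference is that the paper frames it as a proof by contradiction (assume \eqref{signo_cool} fails, then show $p\equiv 0$), whereas you derive the inequality directly at the point where $|p|$ is maximal; the underlying maximum-principle argument is identical.
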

\begin{proof}
	Without loss of generality we can assume $F^{(1)}(0)>0$ (since $u$ is solution implies $-u$ is also a solution in the case $q$ odd). Assume, by contradiction, that $0< \left( F^{(1)}(0) \right)^3 \leq -\frac32 F^{(3)}(0)$, so that $F^{(3)}(0)<0.$ Recall \eqref{Lq}, and the fact that $q=3$.	Then we have that $p$ solves
	\begin{equation*}
		-\Delta p(x)=f(p(x)),
	\end{equation*} 
where 
\[
f(p) : = -12\alpha^{2} p  +4 \alpha^{2}  \left(F^{(1)}(0)\right)^{2} \left( 3\left(  \frac{F^{(3)}(0)}{(F^{(1)}(0))^3}\right)+2\right)  p^{3}.
\]
Clearly $f$ is smooth. 

\medskip

First, we prove that $p \leq 0$. Indeed, assume $p(y_1)>0$ for some $y_1\in\mathbb R^N$. Since $p\in C^2(\mathbb R^N)$ and $|p|\to 0$ as $|x|\to +\infty$, $p$ is bounded in $\mathbb R^N$. Therefore, $\sup_{x\in\mathbb R^N}p(x)>0$ is finite. Since $p(x)\to 0$ as $|x|\to +\infty$, given $\varepsilon>0$ there is $R>0$ such that $|p(y)|<\varepsilon$ if $|y|>R$. Assume $\varepsilon = \frac12p(y_1)$ and choose a corresponding $R$. Now consider the ball $\overline{B}(0,R)$. By making $R>0$ larger if necessary, $y_1\in \overline{B}(0,R)$. Therefore, the maximum value of $p$ is attained inside a compact set of $\mathbb R^N$. Therefore, we can assume $y_1$ a local maximum, and $Dp(y_1)=0$, $\Delta p(y_1)\leq 0$. Consequently
\[
0< -\Delta p(y_1) +12\alpha^2 p(y_1) = 4 \alpha^{2}  \left(F^{(1)}(0)\right)^{2} \left( 3\left(  \frac{F^{(3)}(0)}{(F^{(1)}(0))^3}\right)+2\right)  p(y_1)^{3},
\]
implying that 
\[
3\left(  \frac{F^{(3)}(0)}{(F^{(1)}(0))^3}\right)+2>0,
\]	
a contradiction. Therefore, $p\leq0$.

\medskip

Now, we prove that $p\geq 0$. Once again, assuming the contrary, for some $y_2\in\mathbb R^N$, one has $p(y_2)<0$, $Dp(y_2)=0$, $\Delta p(y_2) \geq 0$. In this case, 
\[
0> -\Delta p(y_2) +12\alpha^2 p(y_2) = 4 \alpha^{2}  \left(F^{(1)}(0)\right)^{2} \left( 3\left(  \frac{F^{(3)}(0)}{(F^{(1)}(0))^3}\right)+2\right)  p(y_2)^{3},
\]
implying that
\[
3\left(  \frac{F^{(3)}(0)}{(F^{(1)}(0))^3}\right)+2 >0.
\]
In both cases we have arrived to a contradiction, so necessarily \eqref{signo_cool} holds.
\end{proof}


From Lemma \ref{no_da},  $|\nabla_c {p(x^*)}|^2+\left(\frac{2 F^{(1)}(0)^3}{F^{(3)}(0)}+3\right)(\partial_{x_1}p(x^*))^2>0$.   We conclude from \eqref{explosion} that $|\Delta p (x^*)|=\infty$, a contradiction.  Thus $p$ does not change sign.

%
%
%
%
%
%

\medskip

 Without loss of generality in the next we assume that $p>0$. Notice that  $p$ is radial. Indeed from \eqref{hop} one has 
\begin{equation*}
  \begin{aligned}
0=&\mu_1{p}^q+\mu_2{p}^3+\mu_3 p+\Delta p.
\end{aligned}
\end{equation*}
Since $\mu_3<0$, $p>0$ and $p$ decays if $|x|\to\infty$ from \cite[Thm. 2 Rmk.]{Gidas} we have $p$ is $x_0$-radial for some $x_0\in \mathbb R$, that is $p(x)=p_0(|x-x_0|)$. For simplicity we consider the center $x_0$ at the origin. Now we consider the spherical change of variable on $\mathbb R^N$, ($N\geq2$) given by 
\begin{equation*}
\begin{aligned}
x_1= &~{} r \cos \left(\varphi_1\right), \\
x_2= &~{} r \sin \left(\varphi_1\right) \cos \left(\varphi_2\right), \\
x_3= &~{} r \sin \left(\varphi_1\right) \sin \left(\varphi_2\right) \cos \left(\varphi_3\right), \\
& \vdots \\
x_{n-1}= &~{} r \sin \left(\varphi_1\right) \cdots \sin \left(\varphi_{n-2}\right) \cos \left(\varphi_{n-1}\right), \\
x_n= &~{} r \sin \left(\varphi_1\right) \cdots \sin \left(\varphi_{n-2}\right) \sin \left(\varphi_{n-1}\right).
\end{aligned}
\end{equation*}
From \eqref{hop2} one gets
\begin{equation*}
\begin{aligned}
  0=&~{}\lambda_1 p_0^3(r)+\lambda_2 p_0^{q+2}(r)+\lambda_3 p_0^q(r)+\lambda_4 p_0^5(r)\\
    &~{} +p_0(r)\left(p'_0(r)(1-\cos^2(\varphi_1))+3p_0'(r)\cos^2(\varphi_1))\right)+\lambda_5 p_0(r)^{q-2}p_0'(r)\cos^2(\varphi_1).
\end{aligned}
\end{equation*}
for all $r>0$ and $\varphi_1\in \mathbb R$.
In case $q\ge5$ we have 
\begin{equation*}
\begin{aligned}
  0=&~{}\lambda_1 p_0^3(r)+\lambda_2 p_0^{q+2}(r)+\lambda_3 p_0^q(r)+\lambda_4 p_0^5(r)+p_0(r)p'_0(r)(1+2\cos^2(\varphi_1))\\
    &~{}+\lambda_5 p_0(r)^{q-2}p_0'(r)\cos^2(\varphi_1).
\end{aligned}
\end{equation*}
Thus $p_0(r)p'_0(r)=0$ and $\lambda_5p_0(r)^{q-2}p_0'(r)=0$. This implies $p'_0(r)=0$ is constant, a contradiction by the decay condition of $p$. The case $q=3$ follows similarly. Hence the proof of Theorem \ref{MT2} is complete.
\medskip

\end{document}